\newmdenv[
  roundcorner=5pt, backgroundcolor=gray!10, middlelinewidth=0, outerlinewidth=2, outerlinecolor=gray!70, skipabove=4pt, skipbelow=4pt, nobreak=true
]{greybox}
\newcommand{\tG}[1]{\ensuremath{\tilde{G}_{#1}}}
\newcommand{\varep}{\ensuremath{\varepsilon}}
\newcommand{\varepc}{\ensuremath{\varepsilon_{\mathbb{C}}}}
\newcommand{\mV}{\ensuremath{\mathcal{V}}}
\newcommand{\mH}{\ensuremath{\mathcal{H}}}
\newtheorem{thm}{Theorem}[section]
\newtheorem{prop}[thm]{Proposition}
\newtheorem{lem}[thm]{Lemma}
\newtheorem{cor}[thm]{Corollary}
\theoremstyle{definition}
\newtheorem{defn}[thm]{Definition}
\theoremstyle{remark}
\newtheorem{rem}[thm]{Remark}
\numberwithin{equation}{section}
\title{The field and Killing spinor equations of M-theory and type IIA/IIB supergravity in coordinate-free notation}
\author{M.~J.~D.~Hamilton\footnote{email: mark.hamilton@math.lmu.de}}
\date{\today}
\begin{document}

\maketitle

\begin{abstract} We review the actions of the supergravity theory in eleven dimensions as well as the type IIA and IIB supergravities in ten dimensions and derive the bosonic equations of motion in a coordinate-free notation. We also consider the existence of supersymmetries and the associated generalized Killing spinor equations. The aim of this note is to serve as a formulary and make the equations of supergravity more easily accessible to mathematicians.
\end{abstract}

\tableofcontents

\section{Introduction}

Supergravity theories in dimension ten and eleven are closely related to superstring and M-theory. The ``classical'' field equations (also known as equations of motion) of supergravity theories play a substantial role in research (in areas such as compactifications, $p$-branes or AdS/CFT-duality), even though string and M-theory are ultimately thought of as quantum theories.

It is not our intention to give an introduction into the physics of these theories, in particular, why the Lagrangians of the supergravity theories have this specific form or the meaning of supersymmetry (such an introduction and more details can be found, for example, in the textbooks listed in the references at the end of this article). The content of this note is also not original and appears in a number of different forms and places in the literature. Our aim rather is to make the field equations of supergravity theories more easily ``readable'' to a mathematical, in particular, differential-geometric audience. We also decided to include most of the proofs concerning the derivation of the equations of motion from the actions, so that they can be checked and possible mistakes we made can be spotted more easily.

We focus on the field equations for the bosonic fields, since in many applications the fermionic fields are set to zero. We also consider only the supergravity theory in eleven dimensions as well as the type IIA and IIB supergravities in ten dimensions and leave the type I and heterotic case to a future paper.

Recall that the classical Einstein field equation for the Lorentz metric on a four-dimensional spacetime $M$ in the presence of a source-free electromagnetic field, described by a $2$-form $F$, is given by \cite{MTW}, \cite{Wald}
\begin{align*}
\mathrm{Ric}(X,Y)-\frac{1}{2}g(X,Y)R&=8\pi T(X,Y)\\
&=2\left(\langle i_XF,i_YF\rangle-\frac{1}{2}g(X,Y)|F|^2\right),
\end{align*}
where $T$ is the energy-momentum tensor of the electromagnetic field and the equation should hold for all vector fields $X$ and $Y$ on the manifold $M$ (we work here in units where the gravitational constant is $1$). These are supplemented by the equations of motion for the electromagnetic field: the Maxwell equation
\begin{equation*}
d*F=0.
\end{equation*}
The field equations can be derived from the action
\begin{equation*}
\frac{1}{16\pi}\int_M\mathrm{dvol}_g\left(R-2|F|^2\right).
\end{equation*}
In addition there is a Bianchi identity if we assume that the field strength $F$ is only locally of the form $F=dA$, where $A$ is the electromagnetic potential:
\begin{equation*}
dF=0.
\end{equation*}
Similarly, the Einstein field equation in the presence of a massless real scalar field $\phi$ is given by
\begin{equation*}
\mathrm{Ric}(X,Y)-\frac{1}{2}g(X,Y)R=8\pi\left(d\phi(X)d\phi(Y)-\frac{1}{2}g(X,Y)|d\phi|^2\right)
\end{equation*}
and the equation of motion for $\phi$ is
\begin{equation*}
\Delta\phi=0.
\end{equation*}
Both can be derived from the action
\begin{equation*}
\frac{1}{16\pi}\int_M\mathrm{dvol}_g\left(R-8\pi|d\phi|^2\right).
\end{equation*}
It turns out that the bosonic field equations of supergravity theories are formally very similar. Besides the spacetime metric, the bosonic field content of supergravity theories involves certain differential forms (potentials and their associated field strengths) as well as a scalar field (smooth function) in dimension ten. There is again an Einstein equation with quadratic expressions of the scalar field and differential forms ``on the right hand side'', where in dimension four the energy-momentum tensor appears. These fields can thus be thought of as carrying energy that curves spacetime. 

In addition to the field equations for the metric, there are equations of motion for the scalar field and differential forms. The field equations for the differential forms, which we again call Maxwell equations, following \cite{FigS}, are formally similar to a coupled version of the classical Maxwell equations for the electromagnetic field. If we assume the potentials to exist only locally, we also have to add certain Bianchi identities. 

Given a solution to the equations of motion we can ask whether it is supersymmetric, i.e.~invariant under supersymmetries. Note that there is a difference between a Lagrangian or action of a field theory being invariant under certain symmetries and a solution to the field equations being invariant under symmetries. For example, the action of classical general relativity is invariant under diffeomorphisms and infinitesimal diffeomorphisms can be generated by vector fields. However, a spacetime metric which is a solution to the field equations will be invariant only under special diffeomorphisms (isometries), generated by Killing vector fields. Similarly, a supergravity action is invariant under local supersymmetries, generated by a spinor (a section of a spinor bundle over the manifold). However, a solution to the field equations will be invariant only under special supersymmetries, generated by so-called generalized Killing spinors. (We could also consider vector fields whose flow not only leaves the metric invariant, but also the other bosonic fields of a given supergravity solution. This leads to the notion of Killing superalgebras \cite{FigK}.)

Under a supersymmetry a boson transforms into a boson, hence the spinor parameter generating the supersymmetry has to be compensated by a fermion. Similarly the supersymmetry transformation of a fermion gives a fermion and involves the spinor parameter and the bosons. Since we have set the fermionic fields to zero, it follows that the bosonic fields in a solution of the field equations are automatically invariant under supersymmetries. It remains to show that the fermionic fields -- the gravitino and (in dimension 10) the dilatino -- are invariant under supersymmetries. For the gravitino, this reduces to the question of existence of a certain spinor (generating the supersymmetry) on the spacetime manifold that solves a generalized Killing spinor equation, called gravitino Killing spinor equation. Furthermore, the dilatino is invariant if this Killing spinor solves an additional algebraic equation, which we call dilatino Killing spinor equation (even though it does not contain a derivative of the spinor). Both equations involve the differential forms and the scalar field of the given background, i.e.~the solution to the equations of motion.

Since the gravitino and dilatino Killing spinor equation are linear, the space of solutions to these equations forms a real vector space (a vector subspace in the space of all spinors, i.e.~sections of the spinor bundle). Minkowski spacetime with the differential forms set to zero (and the dilaton constant in dimension ten) is a solution to the equations of motion. The gravitino Killing spinor equation in this case reduces to the equation for a parallel spinor and the dilatino Killing spinor equation is vacuous. The space of solutions thus has maximal real dimension, which is $32$ for both the eleven-dimensional and type IIA/IIB supergravity. In general, the vector space of supersymmetries of a given background will have smaller dimension (or dimension zero, i.e.~no supersymmetries at all).

In a final section we give a detailed derivation of the action and Killing spinor equations of type IIA supergravity on a ten-dimensional Lorentz manifold $N$ from the eleven-dimensional supergravity theory on the manifold $M=N\times S^1$. For the calculation of the curvature of the metric on $M$ it is very convenient to use the notion of Riemannian submersions and the O'Neill formulas.

\subsection{Conventions}
We use the signature $(-,+,\ldots,+)$ for the spacetime metric and the Einstein summation convention throughout.  We always assume that spacetimes are oriented.

\subsection{References}

Original references for supergravity theories are 
\begin{enumerate}
\item for the eleven-dimensional theory/M-theory \cite{CJS} and \cite{DLM}
\item for type IIA \cite{CW}, \cite{GiPe} and \cite{HN}
\item for type IIB \cite{Schwarz83}, \cite{SW} and \cite{HW}.
\end{enumerate} 
For the actions we follow the notation in the textbooks \cite{BBS}, \cite{BLT}, \cite{Johnson} and \cite{Polchinski} and the papers \cite{BHO}, \cite{Hull} and \cite{Schwarz95}. A general reference for supergravity theories is \cite{FP}. We give more specific references in each section. The idea for this work came from trying to rederive the fields equations given in coordinate-free form in \cite{FigH} and \cite{FigS}.

\subsection{Acknowledgements}
I would like to thank Mario Garcia Fernandez for informing me about an error in a previous version in the string frame Einstein equations, which also affected the dilaton equations. Thanks also to Mihaela Pilca for noticing that in a previous version the differential forms in Definition \ref{defn:11d supergravity} had a wrong degree.

\section{Differential forms and variations of fields}

Let $(M,g)$ be an oriented spacetime of dimension $(n+1)$, where $g$ is a Lorentz metric of signature $(-,+,\ldots,+)$. The canonical volume element in a positively oriented chart with local coordinates $x^\mu$ is given by
\begin{equation*}
\mathrm{dvol}_g=\sqrt{|g|}dx^0\wedge\ldots\wedge dx^{n},
\end{equation*}
where
\begin{equation*}
|g|=|\det(g_{\mu\nu})|=-\det(g_{\mu\nu})
\end{equation*}
is the absolute value of the determinant of the matrix with entries
\begin{equation*}
g_{\mu\nu}=g(\partial_\mu,\partial_\nu).
\end{equation*}
The Riemann curvature tensor, the Ricci curvature and the scalar curvature of $g$ are defined by
\begin{align*}
R(X,Y)Z&=\nabla_X\nabla_YZ-\nabla_Y\nabla_XZ-\nabla_{[X,Y]}Z\\
\mathrm{Ric}(X,Y)&=\mathrm{tr}(Z\longmapsto R(Z,X)Y)\\
R&=\mathrm{tr}(Z\longmapsto \mathrm{Ric}(Z)),
\end{align*}
where $\nabla$ is the Levi-Civita connection of $g$, $\mathrm{tr}$ denotes the trace and $\mathrm{Ric}(Z)$ is defined by
\begin{equation*}
g(\mathrm{Ric}(Z),X)=\mathrm{Ric}(Z,X)
\end{equation*}
for all vector fields $X$. We denote the components of the Ricci curvature $\mathrm{Ric}$ in the local coordinates $x^\mu$ by $R_{\mu\nu}$.

In the following results concerning variations of fields $\mathrm{td}$ denotes a total differential, whose integral over spacetime vanishes.
\begin{lem}\label{lem:var g}
We have under variations of the metric $g$
\begin{align*}
\delta R&=R_{\mu\nu}\delta g^{\mu\nu}+\mathrm{td}\\
\delta\mathrm{dvol}_g&=-\frac{1}{2}g_{\mu\nu}\mathrm{dvol}_g\delta g^{\mu\nu}.
\end{align*}
This implies for the variation of the Einstein-Hilbert Lagrangian
\begin{equation*}
\delta(R\mathrm{dvol}_g)=\left(R_{\mu\nu}-\frac{1}{2}g_{\mu\nu}R\right)\mathrm{dvol}_g\delta g^{\mu\nu}+\mathrm{td}.
\end{equation*}
\end{lem}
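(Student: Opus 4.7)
The plan is to prove the three identities in order, relying only on Jacobi's formula for the determinant of a varying matrix and on the Palatini identity for the variation of the Ricci tensor.

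For $\delta\,\mathrm{dvol}_g$ I start from $\mathrm{dvol}_g=\sqrt{|g|}\,dx^0\wedge\cdots\wedge dx^n$ and apply Jacobi's formula $\delta\det(g_{\mu\nu})=\det(g_{\mu\nu})\,g^{\mu\nu}\delta g_{\mu\nu}$. Since $|g|=-\det(g_{\mu\nu})$ in Lorentzian signature this gives $\delta\sqrt{|g|}=\tfrac12\sqrt{|g|}\,g^{\mu\nu}\delta g_{\mu\nu}$, and varying the identity $g^{\mu\alpha}g_{\alpha\nu}=\delta^\mu_\nu$ yields $\delta g_{\mu\nu}=-g_{\mu\alpha}g_{\nu\beta}\,\delta g^{\alpha\beta}$, so that $\delta\sqrt{|g|}=-\tfrac12\sqrt{|g|}\,g_{\mu\nu}\delta g^{\mu\nu}$. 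The second claim follows at once.

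For $\delta R$ I expand $R=g^{\mu\nu}R_{\mu\nu}$, giving $\delta R=R_{\mu\nu}\,\delta g^{\mu\nu}+g^{\mu\nu}\delta R_{\mu\nu}$, so the task reduces to showing that $g^{\mu\nu}\delta R_{\mu\nu}\,\mathrm{dvol}_g$ is a total differential. Although the Christoffel symbols are not tensorial, the difference $\delta\Gamma^{\alpha}_{\mu\nu}$ of two connections is a genuine $(1,2)$-tensor; a direct calculation (most cleanly carried out in normal coordinates at a point) then yields the Palatini identity
\[
\delta R_{\mu\nu}=\nabla_\alpha\bigl(\delta\Gamma^\alpha_{\mu\nu}\bigr)-\nabla_\nu\bigl(\delta\Gamma^\alpha_{\mu\alpha}\bigr).
\]
Using metric compatibility, $g^{\mu\nu}\delta R_{\mu\nu}=\nabla_\alpha W^\alpha$ with $W^\alpha=g^{\mu\nu}\delta\Gamma^\alpha_{\mu\nu}-g^{\mu\alpha}\delta\Gamma^\beta_{\mu\beta}$, and the standard identity $(\nabla_\alpha W^\alpha)\,\mathrm{dvol}_g=d(\iota_W\mathrm{dvol}_g)$ exhibits this contribution as exact, hence as a total differential. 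The Einstein-Hilbert formula is then immediate from the Leibniz rule:
\[
\delta(R\,\mathrm{dvol}_g)=(\delta R)\,\mathrm{dvol}_g+R\,\delta\mathrm{dvol}_g=\Bigl(R_{\mu\nu}-\tfrac12 g_{\mu\nu}R\Bigr)\mathrm{dvol}_g\,\delta g^{\mu\nu}+\mathrm{td}.
\]

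The main obstacle is the Palatini step: one must establish the explicit coordinate expression for $\delta R_{\mu\nu}$ in terms of $\delta\Gamma^\alpha_{\mu\nu}$ and then recognise the resulting object as a bona fide covariant divergence, so that its integration against $\mathrm{dvol}_g$ genuinely vanishes. The remaining computations (varying a determinant and inverting the metric) are purely algebraic.
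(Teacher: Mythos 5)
Your proposal is correct; the paper offers no argument of its own for this lemma but simply refers to the standard textbooks \cite{MTW} and \cite{Wald}, and your derivation (Jacobi's formula giving $\delta\sqrt{|g|}=-\tfrac12\sqrt{|g|}\,g_{\mu\nu}\delta g^{\mu\nu}$, the Palatini identity for $\delta R_{\mu\nu}$, and the observation that the resulting covariant divergence times $\mathrm{dvol}_g$ is an exact form, hence a total differential) is precisely the standard argument those references give. No gaps.
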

\begin{proof}
This result is well known and can be found in most textbooks on general relativity, such as \cite{MTW} and \cite{Wald}.
\end{proof}
We also need the following generalization:
\begin{lem}\label{lem:var Hilbert Einstein dilaton}
Let $\phi$ be a smooth function on $M$. Then under variations of the metric $g$ we have
\begin{align*}
\delta\left(e^{-2\phi}R\mathrm{dvol}_g\right)&=\mathrm{td}+\mathrm{dvol}_ge^{-2\phi}\biggl(R_{\mu\nu}-\frac{1}{2}g_{\mu\nu}R-4(\partial_\mu\phi)(\partial_\nu\phi)+2\nabla_{\mu}(\partial_\nu\phi)\\
&\hspace{3cm} +4(\partial_\alpha\phi)(\partial^{\alpha}\phi)g_{\mu\nu}-2\nabla_\alpha(\partial^{\alpha}\phi)g_{\mu\nu}\biggr)\delta g^{\mu\nu}.
\end{align*}
\end{lem}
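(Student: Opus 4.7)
The plan is to reduce the statement to Lemma~\ref{lem:var g}, paying careful attention to the total differential that was harmlessly discarded there: in the presence of the conformal factor $e^{-2\phi}$, integration by parts against $e^{-2\phi}$ promotes that term to genuine contributions.

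First, since $\phi$ is held fixed under the variation, I would write
$$
\delta\bigl(e^{-2\phi}R\,\mathrm{dvol}_g\bigr) = e^{-2\phi}\bigl(\delta R\cdot\mathrm{dvol}_g + R\cdot\delta\mathrm{dvol}_g\bigr).
$$
The $R\cdot\delta\mathrm{dvol}_g$ piece gives the $-\tfrac{1}{2}g_{\mu\nu}R$ summand immediately from the second formula of Lemma~\ref{lem:var g}. For $\delta R$ I would use the decomposition $\delta R = R_{\mu\nu}\,\delta g^{\mu\nu} + g^{\mu\nu}\,\delta R_{\mu\nu}$. The first summand contributes the $R_{\mu\nu}$ term in the claim. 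The second summand was dropped as a total divergence in the proof of Lemma~\ref{lem:var g}; here, it must be tracked through integration by parts against $e^{-2\phi}$.

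The main tool is the Palatini identity
$$
g^{\mu\nu}\,\delta R_{\mu\nu} = \nabla_\mu V^\mu,\qquad V^\mu := g^{\alpha\beta}\,\delta\Gamma^{\mu}_{\alpha\beta} - g^{\mu\alpha}\,\delta\Gamma^{\beta}_{\alpha\beta},
$$
together with the standard formula $\delta\Gamma^{\lambda}_{\mu\nu} = \tfrac{1}{2}g^{\lambda\sigma}(\nabla_\mu\delta g_{\nu\sigma} + \nabla_\nu\delta g_{\mu\sigma} - \nabla_\sigma\delta g_{\mu\nu})$ and the identity $\delta g_{\mu\nu} = -g_{\mu\alpha}g_{\nu\beta}\,\delta g^{\alpha\beta}$ to express $V^\mu$ as a linear combination of the first covariant derivatives of $\delta g^{\alpha\beta}$. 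I would then integrate by parts twice: first to move $\nabla_\mu$ off $V^\mu$ and onto $e^{-2\phi}$, producing the factor $\nabla_\mu e^{-2\phi} = -2(\partial_\mu\phi)e^{-2\phi}$; then to move the remaining derivative inside $V^\mu$ off $\delta g^{\alpha\beta}$ and onto the product $e^{-2\phi}(\partial\phi)$. The second integration by parts delivers the two second-derivative terms $\nabla_\mu(\partial_\nu\phi)$ and $\nabla_\alpha(\partial^\alpha\phi)\,g_{\mu\nu}$ when the derivative hits $\partial\phi$, and the two quadratic terms $(\partial_\mu\phi)(\partial_\nu\phi)$ and $(\partial_\alpha\phi)(\partial^\alpha\phi)\,g_{\mu\nu}$ when it hits the exponential.

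The remaining work is combinatorial. The hard part will be tracking index placements and signs so that the coefficients $-4$, $+2$, $+4$, $-2$ emerge correctly. The key pieces of bookkeeping are the splitting of $V^\mu$ into a non-trace and a trace piece (the latter yielding the $g_{\mu\nu}$-contributions), the factor $-2$ produced by each differentiation of $e^{-2\phi}$, the minus sign from converting $\delta g_{\mu\nu}$ to $\delta g^{\alpha\beta}$, and the symmetry in $\delta g^{\mu\nu}$ that forces symmetrization in the resulting coefficients. Beyond Lemma~\ref{lem:var g} and the Palatini identity, no further geometric input is required.
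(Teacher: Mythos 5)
Your proposal is correct and takes essentially the same route as the paper: the paper likewise isolates $g^{\mu\nu}\delta R_{\mu\nu}$ as a covariant divergence of first derivatives of $\delta g_{\alpha\beta}$ (quoting Wald's (E.1.14)--(E.1.17) instead of rederiving it from the Palatini identity and $\delta\Gamma^{\lambda}_{\mu\nu}$) and then integrates by parts twice against $e^{-2\phi}$, with the $-2\,\partial\phi$ factors and the sign from $\delta g_{\alpha\beta}=-g_{\alpha\mu}g_{\beta\nu}\,\delta g^{\mu\nu}$ producing exactly the coefficients $-4$, $+2$, $+4$, $-2$. The only difference is presentational, namely how the divergence form of $g^{\mu\nu}\delta R_{\mu\nu}$ is obtained.
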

\begin{proof}
According to equations (E.1.14) to (E.1.17) in \cite{Wald} we have
\begin{equation*}
\delta\left(R\mathrm{dvol}_g\right)=\mathrm{dvol}_g(\delta R_{\mu\nu})g^{\mu\nu}+\mathrm{dvol}_g\left(R_{\mu\nu}-\frac{1}{2}g_{\mu\nu}R\right)\delta g^{\mu\nu}
\end{equation*}
where
\begin{equation*}
(\delta R_{\mu\nu})g^{\mu\nu}=\nabla^{\alpha}\left(\nabla^{\beta}(\delta g_{\alpha\beta})-g^{\gamma\delta}\nabla_\alpha(\delta g_{\gamma\delta})\right).
\end{equation*}
We get
\begin{align*}
e^{-2\phi}(\delta R_{\mu\nu})g^{\mu\nu}&=\nabla^\alpha\left(e^{-2\phi}\left(\nabla^{\beta}(\delta g_{\alpha\beta})-g^{\gamma\delta}\nabla_\alpha(\delta g_{\gamma\delta})  \right)\right)\\
&\quad + 2e^{-2\phi}(\partial^{\alpha}\phi)\left(\nabla^{\beta}(\delta g_{\alpha\beta})-g^{\gamma\delta}\nabla_\alpha(\delta g_{\gamma\delta})\right)\\
&=\mathrm{td}+\nabla^{\beta}\left(2e^{-2\phi}(\partial^\alpha\phi)\delta g_{\alpha\beta}\right)+4e^{-2\phi}(\partial^\beta\phi)(\partial^\alpha\phi)\delta g_{\alpha\beta}\\
&\quad -2e^{-2\phi}\nabla^\beta(\partial^\alpha\phi)\delta g_{\alpha\beta}- \nabla_\alpha\left(2e^{-2\phi}(\partial^\alpha\phi)g^{\gamma\delta}\delta g_{\gamma\delta} \right)\\
&\quad -4e^{-2\phi}(\partial_\alpha\phi)(\partial^\alpha\phi)g^{\gamma\delta}\delta g_{\gamma\delta}+2e^{-2\phi}\nabla_\alpha(\partial^\alpha\phi)g^{\gamma\delta}\delta g_{\gamma\delta}\\
&=\mathrm{td}-e^{-2\phi}\biggl(4(\partial_\mu\phi)(\partial_\nu\phi)-2\nabla_{\mu}(\partial_\nu\phi)\\
&\hspace{2cm}-4(\partial_\alpha\phi)(\partial^{\alpha}\phi)g_{\mu\nu}+2\nabla_\alpha(\partial^{\alpha}\phi)g_{\mu\nu}\biggr)\delta g^{\mu\nu}.
\end{align*}
This implies the claim.
\end{proof}

\begin{defn}
We define the scalar product of two $k$-forms $F,G\in\Omega^k(M)$ by
\begin{align*}
\langle F,G\rangle &=\sum_{\mu_1<\ldots<\mu_k}F_{\mu_1\ldots\mu_k}G^{\mu_1\ldots\mu_k}\\
&=\frac{1}{k!}\sum_{\mu_1,\ldots,\mu_k}F_{\mu_1\ldots\mu_k}G^{\mu_1\ldots\mu_k},
\end{align*}
where
\begin{equation*}
F_{\mu_1\ldots\mu_k}=F(\partial_{\mu_1},\ldots,\partial_{\mu_k})
\end{equation*}
and we raise indices as usual with the inverse metric $g^{\mu\nu}$. The associated norm is given by
\begin{equation*}
|F|^2=\langle F,F\rangle.
\end{equation*}
We also define the Hodge star operator by
\begin{equation*}
\langle F,G\rangle
\mathrm{dvol}_g=F\wedge *G = G\wedge *F
\end{equation*}
\end{defn}
\begin{rem}
Note that the norm $|\cdot|$ on $k$-forms is not positive definite on a Lorentz manifold. In particular, $|F|^2=0$ does not imply $F=0$. Note also that our definition of the Hodge star operator does not necessarily coincide with the definition sometimes found in the literature on semi-Riemannian manifolds.
\end{rem}
If $e_0,e_1,\ldots,e_n$ are an oriented, orthonormal basis of vectors (also called vielbein) with
\begin{equation*}
g(e_0,e_0)=-1,\quad g(e_i,e_i)=1\quad\forall 1\leq i\leq n
\end{equation*}
and $\alpha^0,\alpha^1,\ldots,\alpha^n$ the dual basis of $1$-forms with $\alpha^i(e_j)=\delta^i_j$, then
\begin{equation*}
\mathrm{dvol}_g=\alpha^0\wedge\ldots\wedge\alpha^n
\end{equation*}
and
\begin{align*}
*\alpha^0&=-\alpha^1\wedge\ldots\wedge\alpha^n\\
*\alpha^i&=(-1)^i\alpha^0\wedge\ldots\widehat{\alpha^i}\wedge\ldots\alpha^n,
\end{align*}
where $\widehat{\alpha}^i$ means that this term is left out. More generally,
\begin{equation*}
*\left(\alpha^{m_0}\wedge\ldots\wedge\alpha^{m_k}\right)=\eta^{m_0m_0}\cdots\eta^{m_km_k}\epsilon_{m_0\ldots m_km_{k+1}\ldots m_n}\alpha^{m_{k+1}}\wedge\ldots\wedge\alpha^{m_n},
\end{equation*}
where on the right hand side there is no sum over the indices, $\{m_{k+1},\ldots,m_n\}$ is one complementary set to $\{m_0,\ldots,m_k\}$ and $\epsilon$ is totally antisymmetric with $\epsilon_{01\ldots n}=1$. For example,
\begin{equation*}
*(\alpha^0\wedge\alpha^1)=-\alpha^2\wedge\ldots\wedge\alpha^n.
\end{equation*}
\begin{lem}\label{lem:variation of form}
Let $G$ be a $k$-form. Then under a variation of the metric $g$ we have
\begin{equation*}
\frac{\delta |G|^2}{\delta g^{\mu\nu}}=\langle i_{\partial_\mu}G,i_{\partial_\nu}G\rangle,
\end{equation*}
where $i_XG$ denotes contraction with (insertion of) a vector $X$. This implies
\begin{equation*}
\frac{\delta}{\delta g^{\mu\nu}}(|G|^2\mathrm{dvol}_g)=\left(\langle i_{\partial_\mu}G,i_{\partial_\nu}G\rangle-\frac{1}{2}g_{\mu\nu}|G|^2\right)\mathrm{dvol}_g.
\end{equation*}
\end{lem}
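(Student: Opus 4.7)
The plan is to compute $\delta|G|^2$ directly in coordinates and recognize the result as an inner product of contractions, then deduce the second formula from the product rule together with the already-established variation of $\mathrm{dvol}_g$.

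First I would write, using the second form of the definition of $\langle\cdot,\cdot\rangle$,
\[
|G|^2 \;=\; \frac{1}{k!}\, G_{\mu_1\ldots\mu_k}\, g^{\mu_1\nu_1}\cdots g^{\mu_k\nu_k}\, G_{\nu_1\ldots\nu_k},
\]
and observe that the coordinate components $G_{\mu_1\ldots\mu_k}=G(\partial_{\mu_1},\ldots,\partial_{\mu_k})$ do not depend on the metric; only the raising factors $g^{\mu_i\nu_i}$ are varied. Varying one factor $g^{\mu_i\nu_i}$ at a time, the Leibniz rule produces $k$ terms, and by the total antisymmetry of $G$ all of these terms are equal (they differ only by a simultaneous permutation of the dummy indices on the two copies of $G$, which is absorbed by the antisymmetry).

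Next I would collect these $k$ equal contributions to obtain
\[
\frac{\delta|G|^2}{\delta g^{\mu\nu}} \;=\; \frac{k}{k!}\, G_{\mu\alpha_2\ldots\alpha_k}\, G_{\nu}{}^{\alpha_2\ldots\alpha_k} \;=\; \frac{1}{(k-1)!}\, G_{\mu\alpha_2\ldots\alpha_k}\, G_{\nu}{}^{\alpha_2\ldots\alpha_k}.
\]
The right-hand side is by definition $\langle i_{\partial_\mu}G,i_{\partial_\nu}G\rangle$, since $(i_{\partial_\mu}G)_{\alpha_2\ldots\alpha_k}=G_{\mu\alpha_2\ldots\alpha_k}$ and the scalar product on $(k-1)$-forms carries the prefactor $\frac{1}{(k-1)!}$. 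This gives the first identity.

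For the second identity I would apply the Leibniz rule to the product $|G|^2\mathrm{dvol}_g$, using the first identity together with the variation
\[
\delta\mathrm{dvol}_g \;=\; -\tfrac{1}{2}g_{\mu\nu}\,\mathrm{dvol}_g\,\delta g^{\mu\nu}
\]
established in Lemma \ref{lem:var g}. The only genuine obstacle is keeping the combinatorial bookkeeping honest — namely justifying the symmetry argument that reduces the $k$ variational terms to a single term, and matching the resulting $\frac{1}{(k-1)!}$ against the scalar product convention on $(k-1)$-forms; everything else is a straightforward product rule.
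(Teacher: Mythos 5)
Your proposal is correct and follows essentially the same route as the paper: expand $|G|^2$ with all indices raised by explicit factors of $g^{\mu_i\nu_i}$, vary those factors, use the (anti)symmetry to collapse the $k$ equal terms into the prefactor $\frac{k}{k!}=\frac{1}{(k-1)!}$, identify the result with $\langle i_{\partial_\mu}G,i_{\partial_\nu}G\rangle$, and obtain the second formula from the Leibniz rule together with $\delta\mathrm{dvol}_g=-\frac{1}{2}g_{\mu\nu}\mathrm{dvol}_g\,\delta g^{\mu\nu}$. The only difference is that you spell out the symmetry bookkeeping that the paper leaves implicit, which is fine.
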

\begin{proof}
We only have to prove the first claim. We write
\begin{equation*}
|G|^2=\frac{1}{k!}\sum_{\mu_1,\ldots,\mu_k,\nu_1,\ldots,\nu_k}G_{\mu_1\ldots\mu_k}G_{\nu_1\ldots\nu_k}g^{\mu_1\nu_1}\cdots g^{\mu_k\nu_k}.
\end{equation*}
Then
\begin{align*}
\frac{\delta |G|^2}{\delta g^{\mu\nu}}&=\frac{k}{k!}\sum_{\mu_2,\ldots,\mu_k,\nu_2,\ldots,\nu_k}G_{\mu\mu_2\ldots\mu_k}G_{\nu\nu_2\ldots\nu_k}g^{\mu_2\nu_2}\cdots g^{\mu_k\nu_k}\\
&=\frac{1}{(k-1)!}\sum_{\mu_2,\ldots,\mu_k}G_{\mu\mu_2\ldots\mu_k}G_{\nu}^{\,\,\,\mu_2\ldots\mu_k}\\
&=\langle i_{\partial_\mu}G,i_{\partial_\nu}G\rangle.
\end{align*}
\end{proof}
We also need the following lemma:
\begin{lem}
Let $G$ be a $k$-form. Then
\begin{equation*}
g^{\mu\nu}\langle i_{\partial_\mu}G,i_{\partial_\nu}G\rangle=k|G|^2
\end{equation*}
\end{lem}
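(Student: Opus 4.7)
The plan is to apply the formula for $\langle i_{\partial_\mu}G, i_{\partial_\nu}G\rangle$ that was computed in the proof of Lemma \ref{lem:variation of form} and then contract with $g^{\mu\nu}$. From that proof we have
\begin{equation*}
\langle i_{\partial_\mu}G, i_{\partial_\nu}G\rangle = \frac{1}{(k-1)!}\sum_{\mu_2,\ldots,\mu_k} G_{\mu\mu_2\ldots\mu_k}G_{\nu}^{\,\,\,\mu_2\ldots\mu_k}.
\end{equation*}

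Next, I would contract both sides with $g^{\mu\nu}$. On the right-hand side, $g^{\mu\nu}$ raises the $\nu$ index on the second factor, giving $G^{\mu\mu_2\ldots\mu_k}$ with the remaining $\mu$-index still contracted with $G_{\mu\mu_2\ldots\mu_k}$. This yields
\begin{equation*}
g^{\mu\nu}\langle i_{\partial_\mu}G, i_{\partial_\nu}G\rangle = \frac{1}{(k-1)!}\sum_{\mu,\mu_2,\ldots,\mu_k} G_{\mu\mu_2\ldots\mu_k}G^{\mu\mu_2\ldots\mu_k}.
\end{equation*}

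Finally, I would compare this with the defining expression
\begin{equation*}
|G|^2 = \frac{1}{k!}\sum_{\mu_1,\ldots,\mu_k} G_{\mu_1\ldots\mu_k}G^{\mu_1\ldots\mu_k},
\end{equation*}
relabel $\mu \to \mu_1$, and note that $\frac{1}{(k-1)!} = \frac{k}{k!}$, which immediately gives $g^{\mu\nu}\langle i_{\partial_\mu}G, i_{\partial_\nu}G\rangle = k|G|^2$. There is no real obstacle here; the identity is just bookkeeping with the combinatorial factor $k$, essentially reflecting the fact that summing over which of the $k$ slots of $G$ is singled out for contraction produces $k$ copies of $|G|^2$.
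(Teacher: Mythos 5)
Your proposal is correct and follows essentially the same route as the paper: it contracts the expression $\frac{1}{(k-1)!}\sum_{\mu_2,\ldots,\mu_k}G_{\mu\mu_2\ldots\mu_k}G_{\nu}^{\,\,\,\mu_2\ldots\mu_k}$ for $\langle i_{\partial_\mu}G,i_{\partial_\nu}G\rangle$ with $g^{\mu\nu}$ and compares with the definition of $|G|^2$ via the factor $\frac{1}{(k-1)!}=\frac{k}{k!}$. Nothing is missing.
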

\begin{proof}
We have
\begin{align*}
g^{\mu\nu}\langle i_{\partial_\mu}G,i_{\partial_\nu}G\rangle&=g^{\mu\nu}\frac{1}{(k-1)!}\sum_{\mu_2,\ldots,\mu_k}G_{\mu\mu_2\ldots\mu_k}G_{\nu}^{\,\,\,\mu_2\ldots\mu_k}\\
&=\frac{1}{(k-1)!}\sum_{\mu,\mu_2,\ldots,\mu_k}G_{\mu\mu_2\ldots\mu_k}G^{\mu\mu_2\ldots\mu_k}\\
&=\frac{k!}{(k-1)!}|G|^2\\
&=k|G|^2.
\end{align*}
\end{proof}
We will encounter variations of differential forms $C$, similar to the following.
\begin{lem}\label{lem:var k-form form C G=dC}
Let $C$ be a $k$-form and $G=dC$. Then under variations $\delta C$ we have
\begin{equation*}
\delta (G\wedge *G)=2(-1)^{k+1}\delta C\wedge d*G+\mathrm{td}.
\end{equation*}
\end{lem}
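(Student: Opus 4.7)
The plan is to reduce the variation to a Leibniz-rule manipulation and identify an exact form as a total differential. Since the metric is not being varied here, the Hodge star on $M$ is fixed, so the only variation to track is that of $G = dC$, which gives $\delta G = d(\delta C)$.

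First I would use the symmetry built into the definition of the Hodge star, namely $F \wedge *G = G \wedge *F$, to write
\begin{equation*}
\delta(G \wedge *G) = \delta G \wedge *G + G \wedge *\delta G = 2\,\delta G \wedge *G = 2\, d(\delta C) \wedge *G.
\end{equation*}
This reduces the statement to recognizing $d(\delta C) \wedge *G$ as an exact form plus the expected boundary contribution.

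Next, I would apply the graded Leibniz rule for the exterior derivative to $\delta C \wedge *G$. Since $\delta C$ is a $k$-form,
\begin{equation*}
d(\delta C \wedge *G) = d(\delta C) \wedge *G + (-1)^k \delta C \wedge d{*}G,
\end{equation*}
so that
\begin{equation*}
d(\delta C) \wedge *G = d(\delta C \wedge *G) + (-1)^{k+1} \delta C \wedge d{*}G.
\end{equation*}
Multiplying by $2$ and absorbing the total derivative $2\, d(\delta C \wedge *G)$ into $\mathrm{td}$ yields exactly $\delta(G \wedge *G) = 2(-1)^{k+1} \delta C \wedge d{*}G + \mathrm{td}$.

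There is no real obstacle; the only thing to be careful about is the sign in the Leibniz rule, which depends on the degree $k$ of $\delta C$ (not on the degree of $G$ or $*G$), and the fact that $\delta$ commutes with $d$ on $C$ since variations here act on sections of a fixed bundle.
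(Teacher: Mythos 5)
Your proof is correct and follows essentially the same route as the paper: use the symmetry $F\wedge *G = G\wedge *F$ (valid since $\delta G$ and $G$ have the same degree) to get $2\,d(\delta C)\wedge *G$, then apply the graded Leibniz rule to $\delta C\wedge *G$ and discard the exact term. The sign $(-1)^{k+1}$ from the degree of $\delta C$ is handled exactly as in the paper's argument.
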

\begin{proof}
We have
\begin{align*}
\delta (dC\wedge *dC)&=d\delta C\wedge *dC+ dC\wedge *d\delta C\\
&=2d\delta C\wedge *dC\\
&=2d(\delta C\wedge *dC)-2(-1)^k\delta C\wedge d*dC.
\end{align*}
This implies the claim.
\end{proof}

The Laplacian $\Delta$ of a smooth function $\phi$ is defined by
\begin{equation*}
(\Delta \phi)\mathrm{dvol}_g= d*d\phi.
\end{equation*}
With our definition of the Hodge star we have
\begin{equation*}
*d\phi=(-1)^\mu\left(\sqrt{|g|}g^{\mu\nu}\partial_\nu\phi\right)dx^0\wedge\ldots\wedge\widehat{dx^\mu}\wedge\ldots\wedge dx^n
\end{equation*}
and
\begin{equation*}
\Delta\phi=\frac{1}{\sqrt{|g|}}\partial_\mu\left(\sqrt{|g|}g^{\mu\nu}\partial_\nu\phi\right).
\end{equation*}
This differs by a sign from the definition sometimes found in the literature.

The Hessian of a smooth function $\phi$ is defined by
\begin{equation*}
H^\phi(X,Y)=(\nabla_Xd\phi)(Y)=L_XL_Y\phi-L_{\nabla_XY}\phi.
\end{equation*}
In local coordinates
\begin{align*}
H^\phi(\partial_\mu,\partial_\nu)&=\nabla_\mu\partial_\nu\phi\\
\Delta\phi&=\nabla^\mu\partial_\mu\phi.
\end{align*}
We can then write the equation from Lemma \ref{lem:var Hilbert Einstein dilaton} as
\begin{align*}
\delta\left(e^{-2\phi}R\mathrm{dvol}_g\right)&=\mathrm{td}+\mathrm{dvol}_ge^{-2\phi}\biggl(R_{\mu\nu}-\frac{1}{2}g_{\mu\nu}R-4(\partial_\mu\phi)(\partial_\nu\phi)+2H^\phi(\partial_\mu,\partial_\nu)\\
&\hspace{3cm} +4|d\phi|^2g_{\mu\nu}-2\Delta\phi g_{\mu\nu}\biggr)\delta g^{\mu\nu}.
\end{align*}

\section{Spinors}

We recall some facts about spinors and fix our notation. References are \cite{Friedrich}, \cite{LM} and \cite{AFLV}.

\subsection{Linear algebra of spinors}\label{sect:lin alg spinors}

Let $\eta_{ab}$ be the standard diagonal $(-,+\ldots,+)$ Minkowski metric and $\Gamma_a$ for $a=0,\ldots,n$ (physical) gamma matrices with
\begin{equation*}
\{\Gamma_a,\Gamma_b\}=\Gamma_a\Gamma_b+\Gamma_b\Gamma_a=2\eta_{ab}\mathrm{Id}.
\end{equation*}
They are related to (mathematical) gamma matrices $\gamma_a$ by
\begin{equation*}
\gamma_a=i\Gamma_a,
\end{equation*}
so that
\begin{equation*}
\{\gamma_a,\gamma_b\}=\gamma_a\gamma_b+\gamma_b\gamma_a=-2\eta_{ab}\mathrm{Id}.
\end{equation*}
Indices of $\Gamma_a$ and $\gamma_a$ are raised with $\eta^{ab}$. We set
\begin{equation*}
\Gamma^{a_1\ldots a_k}=\frac{1}{k!}\sum_{\sigma\in S_k}\mathrm{sgn}(\sigma)\Gamma^{a_{\sigma(1)}}\cdots\Gamma^{a_{\sigma(k)}}.
\end{equation*}
Hence $\Gamma^{a_1\ldots a_k}$ is totally antisymmetric in the indices $a_1,\ldots,a_k$ and thus vanishes if two of them are the same. If all indices are distinct, we have
\begin{equation*}
\Gamma^{a_1\ldots a_k}=\Gamma^{a_1}\cdots\Gamma^{a_k}.
\end{equation*}
In particular
\begin{equation*}
\Gamma^{ab}=\frac{1}{2}\left[\Gamma^a,\Gamma^b\right].
\end{equation*}
If $\Sigma$ is a representation space of the Clifford algebra of $(\mathbb{R}^{n+1},\eta)$, then physical Clifford multiplication of a standard basis vector $e_a\in\mathbb{R}^{n+1}$ with a spinor $\psi\in\Sigma$ is given by
\begin{equation*}
e_a\cdot\psi=\Gamma_a\psi,
\end{equation*} 
related to mathematical Clifford multiplication by
\begin{equation*}
e_a\cdot\psi=(-i)\gamma_a\psi.
\end{equation*}
We define physical Clifford multiplication of a $k$-form $F\in\Lambda^k(\mathbb{R}^{n+1})^*$ with a spinor by
\begin{align*}
F\cdot\psi&=\frac{1}{k!}\sum_{a_1,\ldots, a_k}F_{a_1\ldots a_k}\Gamma^{a_1\ldots a_k}\psi\\
&=\sum_{a_1<\ldots<a_k}F_{a_1\ldots a_k}\Gamma^{a_1}\cdots\Gamma^{a_k}\psi,
\end{align*}
related to mathematical Clifford multiplication by
\begin{equation*}
F\cdot \psi=(-i)^k\sum_{a_1<\ldots<a_k}F_{a_1\ldots a_k}\gamma^{a_1}\cdots\gamma^{a_k}\psi.
\end{equation*}
 {\em We will use in the following only physical Clifford multiplications.} One can show that
\begin{equation*}
(\alpha\wedge F)\cdot\psi=\alpha\cdot(F\cdot\psi)-(i_{\alpha^\sharp}F)\cdot\psi,
\end{equation*}
where $\alpha^\sharp$ denotes the vector defined by $\alpha=\eta(\alpha^\sharp,-)$. Furthermore, if $F$ is a form of degree $k$, then
\begin{equation*}
X\cdot (F\cdot\psi)+(-1)^{k+1}F\cdot (X\cdot \psi)=(2i_XF)\cdot\psi
\end{equation*}
for all vectors $X$.

\subsection{Spinors on Lorentz manifolds}
We consider the orthochronous Lorentz group $SO^+(n,1)$ and assume that the frame bundle of the Lorentz manifold $(M,g)$ reduces to an $SO^+(n,1)$-principal bundle, denoted by $SO^+(M)$. Let $Spin(n,1)$ be the orthochronous spin group. Suppose that the manifold $(M,g)$ is spin, so that there exists a spin structure, which is a lift of the frame bundle $SO^+(M)\rightarrow M$ to a $Spin(n,1)$-principal bundle $Spin(M)\rightarrow M$ with $2$-fold covering
\begin{equation*}
\pi\colon Spin(M)\longrightarrow SO^+(M),
\end{equation*}
compatible with the Lie group actions and the $2$-fold covering 
\begin{equation*}
Spin(n,1)\rightarrow SO^+(n,1).
\end{equation*}
Let 
\begin{equation*}
\kappa\colon Spin(n,1)\rightarrow GL(\Delta)
\end{equation*}
be the complex (Dirac) spinor representation on $\Delta=\Delta_{n+1}$. Then the spinor bundle $S$ is defined as the associated vector bundle
\begin{equation*}
S=Spin(M)\times_\kappa \Delta.
\end{equation*}
A local oriented, orthochronous, orthonormal vielbein $e=(e_0,\ldots,e_n)$ on a contractible open subset $U\subset M$ determines a local trivialization of $SO^+(M)$ and two local trivializations $s^e_\pm$ of $Spin(M)$. In each of them we can write a local section $\Psi$ of $S$ (a spinor) as
\begin{equation*}
\Psi=[s^e_\pm,\psi_\pm],
\end{equation*} 
where $\psi_\pm$ is a map from $U$ to $\Delta$ and the square brackets denote the equivalence class in 
\begin{equation*}
S=(Spin(M)\times \Delta)/Spin(n,1).
\end{equation*}
We have $\psi_\pm=-\psi_\mp$. We choose one of the sections $s$ and the corresponding map $\psi$, so that
\begin{equation*}
\Psi=[s,\psi]
\end{equation*}
(the following notions are well-defined, independent of this choice). Clifford multiplication with a tangent vector 
\begin{equation*}
X=\sum X^a e_a \in TM
\end{equation*}
is given by
\begin{equation*}
X\cdot\Psi=\left[s,\sum X^a\Gamma_a\psi\right].
\end{equation*}
There is also a Clifford multiplication $F\cdot\Psi$ of differential forms $F\in\Omega^k(M)$ with spinors $\Psi$.

\subsection{The spin connection}
Let $\nabla$ denote the Levi-Civita connection on the tangent bundle $TM$ of the Lorentz metric $g$. In the local vielbein $e=(e_0,\ldots,e_n)$ we have
\begin{equation*}
\nabla e_a=\omega_{ab}\eta^{bc}\otimes e_c,
\end{equation*}
with certain $1$-forms $\omega_{ab}$. This induces a bundle connection 
\begin{equation*}
A_{SO}\in\Omega^1(SO^+(M),\mathfrak{so}(n,1))
\end{equation*} 
on $SO^+(M)$ and hence a bundle connection 
\begin{equation*}
A_{Spin}\in\Omega^1(Spin(M),\mathfrak{spin}(n,1))
\end{equation*}
on $Spin(M)$, since $\mathfrak{spin}(n,1)\cong \mathfrak{so}(n,1)$. We have associated local connection $1$-forms
\begin{equation*}
A_{Spin}^s=s^*A_{Spin}\in\Omega^1(U,\mathfrak{spin}(n,1)).
\end{equation*}
The connection on the $Spin$-principal bundle in turn defines a covariant derivative on the spinor bundle, locally given by
\begin{equation*}
\nabla_X\Psi=\left[s,\nabla_X\psi\right]
\end{equation*}
where 
\begin{equation*}
\nabla_X\psi=d\psi(X)+A_{Spin}^{s}(X)\cdot\psi
\end{equation*}
and $A_{Spin}^{s}(X)$ acts through the induced representation $\rho_*$ of $\mathfrak{spin}(n,1)$ on $\Delta$. We have the explicit formula
\begin{align*}
\nabla_X\psi&=d\psi(X)+\frac{1}{4}\omega_{ab}(X)\gamma^{ab}\psi\\
&=d\psi(X)-\frac{1}{4}\omega_{ab}(X)\Gamma^{ab}\psi.
\end{align*}
\begin{rem}
In the physics literature the forms $\omega_{ab}$ are often defined with the opposite sign and then the sign in front of $\omega_{ab}$ in the definition of the spin connection has to change as well. Typically in physics the anholonomy coefficients are defined by
\begin{equation*}
[e_a,e_b]=\Omega_{ab}^{\,\,\,\,\,\,c}e_c,
\end{equation*}
which can be calculated if we set
\begin{equation*}
e_a=E^\mu_a\partial_\mu.
\end{equation*}
Then the forms $\omega_{ab}$ are calculated by (this follows, for instance, from equations (8.19) and (8.20) in \cite{BBS})
\begin{equation*}
\omega_{cab}=\omega_{ab}(e_c)=\frac{1}{2}(-\Omega_{cab}+\Omega_{abc}-\Omega_{bca}).
\end{equation*}
This has the opposite sign of our definition (where $\omega_{cab}$ is determined by the Koszul formula).
\end{rem}

\subsection{Weyl spinors}\label{sect:Weyl spinors}
So far we only considered the general complex spinor representation on $\Delta$. If $d=n+1$ is the dimension of spacetime, then the complex dimension of $\Delta$ is 
\begin{itemize}
\item $\mathrm{dim}_{\mathbb{C}}\Delta=2^{(d-1)/2}$ if $d$ is odd
\item $\mathrm{dim}_{\mathbb{C}}\Delta=2^{d/2}$ if $d$ is even.
\end{itemize}
Spinors in $\Delta$ are also called Dirac spinors. In even dimensions $d=n+1=2k$ we define the chirality operator
\begin{equation*}
\Gamma_{n+2}=i^{1-k}\Gamma_0\Gamma_1\ldots\Gamma_n
\end{equation*}
(sometimes other factors instead of $i^{1-k}$ are used in the literature; the definition here differs by a sign from the one in \cite{Polchinski}). We have
\begin{equation*}
(\Gamma_{n+2})^2=\mathrm{Id},\quad \{\Gamma_{n+2},\Gamma^a\}=0,\quad [\Gamma_{n+2},\Gamma^{ab}]=0\quad\forall a,b=0,\ldots,n
\end{equation*}
(these properties hold independent of the conventions in the definition of $\Gamma_{n+2}$). We can split $\Delta$ into the $(\pm 1)$-eigenspaces of $\Gamma_{n+1}$:
\begin{equation*}
\Delta=\Delta^+\oplus\Delta^-.
\end{equation*}
Elements of $\Delta^+$ ($\Delta^-$) are called left- (right-)handed Weyl spinors. The complex dimension of $\Delta^\pm$ is $\frac{1}{2}2^{d/2}=2^{(d-2)/2}$.

This notion can be extended to spinor bundles over even-dimensional manifolds: Multiplication of a spinor with $\Gamma_{n+2}$ is given by
\begin{equation*}
\Gamma_{n+2}\cdot\psi=-i^{1-k}\mathrm{dvol}_g\cdot\psi,
\end{equation*}
where $\mathrm{dvol}_g$ is the canonical volume $(n+1)$-form. Multiplication of a spinor with $\Gamma_{n+2}$ is thus well-defined, independent of the choice of local oriented, orthonormal frame. We can split the spinor bundle $S$ into the $(\pm 1)$-eigenspaces of $\Gamma_{n+1}$:
\begin{equation*}
S=S^+\oplus S^-.
\end{equation*}
The Weyl spinor subbundles $S^\pm$ are preserved under the covariant derivative, while Clifford multiplication with a vector maps $S^\pm$ to $S^\mp$.

\subsection{Majorana spinors}
In certain dimensions $d$ the complex spinor space $\Delta$ admits a real structure, i.e.~a complex anti-linear, $Spin(n,1)$-equivariant map $\sigma\colon \Delta\rightarrow\Delta$ with $\sigma^2=\mathrm{Id}$. There is a real subspace
\begin{equation*}
\Delta^\sigma=\{\phi\in\Delta\mid\sigma(\phi)=\phi\}
\end{equation*}
and the $Spin(n,1)$-representation restricts to $\Delta^\sigma$. Then
\begin{equation*}
\Delta=\Delta^\sigma\oplus i\Delta^\sigma
\end{equation*}
and the real dimension of $\Delta^\sigma$ is half the real dimension of $\Delta$. The elements of $\Delta^\sigma$ are called Majorana spinors.

In some other dimensions the complex spinor space $\Delta$ admits a quaternionic structure, i.e.~a complex anti-linear, $Spin(n,1)$-equivariant map $J\colon\Delta\rightarrow\Delta$ with $J^2=-\mathrm{Id}$. Let $I\colon \Delta\rightarrow\Delta$ be multiplication with $i$. Then $I$, $J$ and $K=IJ$ define on $\Delta$ a $
Spin(n,1)$-equivariant structure of a quaternionic vector space. Elements of $\Delta$ are called symplectic Majorana spinors. These notions extend to spinor bundles, so that we can consider Majorana or symplectic Majorana spinors on manifolds.

Recall that Weyl spinors only exist in even dimensions. In certain even dimensions the real or quaternionic structure commutes with the projections onto the left- and right-handed Weyl spinors and thus these structures restrict to the Weyl spinor spaces. In these dimensions we can consider Majorana-Weyl and symplectic Majorana-Weyl spinors. 

For supersymmetry one is interested in real spinor representations of minimal dimensions. If they exist, Majorana or symplectic Majorana spinors are real representations of minimal dimensions. However, if in addition Majorana-Weyl or symplectic Majorana-Weyl spinors exist, then they are the real representations of minimal dimension. The number $N$ of supersymmetries of a given theory in dimension $d$ is the number of copies of these real spinor representations of minimal dimension.

The existence of real or quaternionic structures for the spinor representation depends for a metric of signature $(s,t)$ only on $s-t$. For a Lorentz metric of signature $(n,1)$ in dimension $d=n+1$ one can show that we have the following types of spinors, together with the minimal real dimension $D$ of a real spinor representation.
\begin{itemize}
\item $d\equiv 0,4\bmod 8$: There exist Weyl spinors and Majorana spinors, but there do not exist Majorana-Weyl spinors; $D=2^{d/2}$.
\item $d\equiv 1,3\bmod 8$: There exist Majorana spinors; $D=2^{(d-1)/2}$.
\item $d\equiv 2\bmod 8$: There exist Majorana-Weyl spinors; $D=2^{(d-2)/2}$.
\item $d\equiv 5,7\bmod 8$: There exist symplectic Majorana spinors; $D=2^{(d+1)/2}$
\item $d\equiv 6\bmod 8$: There exist symplectic Majorana-Weyl spinors; $D=2^{d/2}$.
\end{itemize}

\subsection{Spinors in dimension 10 and 11}\label{sect:spinors in dim 10 and 11}

In a  spacetime of dimension ten (with Lorentz signature) there exist both Weyl and Majorana spinors. In this dimension both structures are compatible, so that we can consider Majorana-Weyl spinors. The Weyl spinor spaces of each chirality have complex dimension $2^4=16$. The Majorana-Weyl spinor spaces of each chirality thus have real dimension $16$. 

With our convention for the chirality operator we have in dimension ten
\begin{equation*}
\Gamma_{11}=\Gamma_0\Gamma_1\cdots\Gamma_9.
\end{equation*}
This coincides with the convention used in equation (4.113) in \cite{BBS}. The chirality operator $\Gamma_{11}$ acts as $+1$ on left-handed and as $-1$ on right-handed spinors. 

In a spacetime of dimension eleven there do not exist Weyl spinors, but there do exist Majorana spinors. The complex dimension of the Dirac spinor space is $2^5=32$, which is equal to the real dimension of the Majorana spinor space.

In odd dimensional spacetime Clifford multiplication with a vector and with forms of odd degree depends on a choice of sign. We choose the sign in dimension eleven such that
\begin{equation*}
\Gamma_0\Gamma_1\cdots\Gamma_9\Gamma_{10}
\end{equation*}
acts as $+1$ on spinors (see appendix A in \cite{GaPa} and \cite[pp.~8-9]{FigS}). 

\section{Different versions of the actions and further remarks}

The supergravity actions sometimes appear in different versions in the literature. The main differences are:
\begin{enumerate}
\item Some authors use a different norm for $k$-forms $F$, that can be written as
\begin{equation*}
(F)^2=\sum_{\mu_1,\ldots,\mu_k}F_{\mu_1\ldots\mu_k}F^{\mu_1\ldots\mu_k},
\end{equation*}
where the sum extends over all $k$-tuples $\mu_1,\ldots,\mu_k$. We have
\begin{equation*}
|F|^2=\frac{1}{k!}(F)^2.
\end{equation*}
\item In supergravity theories in $10$ dimensions one can either use the so-called {\em string frame} or the {\em Einstein frame}. Both are related by a conformal change of the spacetime metric $g$. We will discuss both versions of the actions and the corresponding field equations.
\item The type IIB theory is sometimes written with a different set of fields (we call this the {\em symmetric notation}, because it has an apparent $SL(2,\mathbb{R})$-symmetry). We will also discuss this version of the action and field equations. 
\end{enumerate}
The bosonic field equations make sense on any oriented Lorentz manifold $M$ of dimension $10$ or $11$, respectively. However, for a supersymmetric theory with fermions, we need spin structures, hence we usually have to assume that $M$ is spin.

The actions involve certain constants $\kappa_{10}$ and $\kappa_{11}$ which are related to the Planck length and the gravitational coupling constant in ten and eleven dimensions (in ten dimensions the constant $\kappa_{10}$ is determined by the string length and independent of the string coupling, determined by the dilaton).

\section{M-Theory}

\subsection{The eleven-dimensional supergravity theory}

\begin{defn}\label{defn:11d supergravity}
The bosonic part of the action for the eleven-dimensional supergravity theory on a spacetime $M^{11}$ is given by
\begin{greybox}
\begin{equation*}
S=\frac{1}{2\kappa_{11}^2}\int_M \left(R-\frac{1}{2}|G|^2\right)\mathrm{dvol}_g-\frac{1}{12\kappa_{11}^2}\int_MC\wedge G\wedge G.
\end{equation*}
\end{greybox}
Here 
\begin{greybox}
\begin{enumerate}
\item $g$ is a Lorentz metric on $M$
\item $C$ is a $3$-form on $M$
\item $G$ is a $4$-form field strength defined by $G=dC$.
\end{enumerate}
\end{greybox}
\end{defn}
\begin{rem}
There is a different convention which replaces $C$ by $-C$, so that the action becomes
\begin{equation*}
S'=\frac{1}{2\kappa_{11}^2}\int_M \left(R-\frac{1}{2}|G|^2\right)\mathrm{dvol}_g+\frac{1}{12\kappa_{11}^2}\int_MC\wedge G\wedge G.
\end{equation*}
We use in the following the action $S$ and indicate later how the field equations change for the action $S'$.
\end{rem}
It is sometimes useful to split the action into an Einstein, Maxwell and Chern-Simons term:
\begin{greybox}
\begin{equation*}
S=S_E+S_M+S_{CS}
\end{equation*}
with
\begin{align*}
S_E&=\frac{1}{2\kappa_{11}^2}\int_M \mathrm{dvol}_g R\\
S_M&=-\frac{1}{4\kappa_{11}^2}\int_M \mathrm{dvol}_g |G|^2\\
S_{CS}&=-\frac{1}{12\kappa_{11}^2}\int_M C\wedge G\wedge G.
\end{align*}
\end{greybox}
\begin{thm}
The field equations that follow from the action are given by:
\begin{enumerate}
\item The {\bf Einstein equation:}
\begin{greybox}
\begin{equation*}
\mathrm{Ric}(X,Y)-\frac{1}{2}g(X,Y)R=\frac{1}{2}\langle i_XG,i_YG\rangle -\frac{1}{4}g(X,Y)|G|^2
\end{equation*}
\end{greybox}
or, equivalently, the {\bf Einstein equation in Ricci form:}
\begin{greybox}
\begin{equation*}
\mathrm{Ric}(X,Y)=\frac{1}{2}\langle i_XG,i_YG\rangle-\frac{1}{6}g(X,Y)|G|^2.
\end{equation*}
\end{greybox}
\item The {\bf Maxwell equation:}
\begin{greybox}
\begin{equation*}
d*G+\frac{1}{2}G\wedge G=0.
\end{equation*}
\end{greybox}
\item If we assume that the potential $C$ exists only locally, we add the {\bf Bianchi identity:}
\begin{greybox}
\begin{equation*}
dG=0.
\end{equation*}
\end{greybox}
\end{enumerate}
\end{thm}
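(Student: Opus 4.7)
The plan is to derive each of the three equations independently by varying the appropriate field. The split $S=S_E+S_M+S_{CS}$ is natural because only $S_E$ and $S_M$ depend on the metric, while only $S_M$ and $S_{CS}$ depend on the potential $C$. The Bianchi identity is immediate and requires no variational argument.

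For the \textbf{Einstein equation}, I would vary with respect to $g^{\mu\nu}$. Applying Lemma \ref{lem:var g} to $S_E$ and Lemma \ref{lem:variation of form} to $S_M$ (with $k=4$), and noting that $S_{CS}$ is metric-independent (no Hodge star or $\mathrm{dvol}_g$ appears), the vanishing of $\delta S$ for arbitrary $\delta g^{\mu\nu}$ yields
\begin{equation*}
\tfrac{1}{2}\bigl(R_{\mu\nu}-\tfrac{1}{2}g_{\mu\nu}R\bigr)-\tfrac{1}{4}\bigl(\langle i_{\partial_\mu}G,i_{\partial_\nu}G\rangle-\tfrac{1}{2}g_{\mu\nu}|G|^2\bigr)=0,
\end{equation*}
which rearranges to the stated Einstein equation after recognizing the coordinate-free form. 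For the equivalent Ricci form, I would trace with $g^{\mu\nu}$, using the earlier lemma that $g^{\mu\nu}\langle i_{\partial_\mu}G,i_{\partial_\nu}G\rangle=4|G|^2$ and $g^{\mu\nu}g_{\mu\nu}=11$; this gives $R=\tfrac{1}{6}|G|^2$, which substituted back eliminates $R$ and $-\tfrac{1}{2}g(X,Y)R$ in favor of the $-\tfrac{1}{6}g(X,Y)|G|^2$ term.

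For the \textbf{Maxwell equation}, I would vary with respect to $C$. The variation of $S_M$ follows directly from Lemma \ref{lem:var k-form form C G=dC} with $k=3$, giving $\delta S_M=-\tfrac{1}{2\kappa_{11}^2}\int \delta C\wedge d{*}G$ up to a total differential. The harder step is $S_{CS}$, where I expect the main obstacle: one must track signs carefully in three terms
\begin{equation*}
\delta(C\wedge G\wedge G)=\delta C\wedge G\wedge G+C\wedge d\delta C\wedge G+C\wedge G\wedge d\delta C,
\end{equation*}
and integrate the last two by parts using $d(C\wedge\delta C\wedge G)$ and $d(C\wedge G\wedge \delta C)$, together with $dG=d^2C=0$. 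The signs work out (both integrations by parts contribute $+\delta C\wedge G\wedge G$ modulo a total differential, since the relevant form degrees give even permutation signs), producing $3\,\delta C\wedge G\wedge G$ in total, hence $\delta S_{CS}=-\tfrac{1}{4\kappa_{11}^2}\int \delta C\wedge G\wedge G$. Combining,
\begin{equation*}
\delta(S_M+S_{CS})=-\tfrac{1}{2\kappa_{11}^2}\int \delta C\wedge\bigl(d{*}G+\tfrac{1}{2}G\wedge G\bigr),
\end{equation*}
and the fundamental lemma of the calculus of variations gives the Maxwell equation.

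Finally, the \textbf{Bianchi identity} $dG=0$ is automatic wherever $G=dC$ holds, since $d^2=0$; if $C$ is only a local potential it has to be imposed as an additional global condition. The only genuine bookkeeping challenge in the whole argument is the sign tracking in the Chern–Simons variation; everything else is a direct application of the variational lemmas already established.
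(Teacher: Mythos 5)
Your proposal is correct and follows essentially the same route as the paper: vary the metric using Lemma \ref{lem:var g} and Lemma \ref{lem:variation of form} (with the Chern--Simons term metric-independent), trace with $g^{\mu\nu}$ using $g^{\mu\nu}\langle i_{\partial_\mu}G,i_{\partial_\nu}G\rangle=4|G|^2$ to get $R=\tfrac{1}{6}|G|^2$ and hence the Ricci form, and vary $C$ via Lemma \ref{lem:var k-form form C G=dC} plus integration by parts of the Chern--Simons term to obtain $3\,\delta C\wedge G\wedge G$ and thus the Maxwell equation. The only cosmetic difference is that you integrate the two Chern--Simons cross terms by parts separately while the paper combines them into one term $2C\wedge d\delta C\wedge G$ before integrating by parts; the sign bookkeeping and the result agree.
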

\begin{rem}
Here and in the subsequent sections the Einstein equations should hold for all tangent vectors (or vector fields) $X$ and $Y$ on the manifold $M$.
\end{rem}
\begin{proof}
To prove the Einstein equation we consider variations of the metric. We then have to consider the variation of the term
\begin{equation*}
R\mathrm{dvol}_g-\frac{1}{2}|G|^2\mathrm{dvol}_g.
\end{equation*}
We get by Lemma \ref{lem:var g} and Lemma \ref{lem:variation of form}
\begin{align*}
\delta\left(R\mathrm{dvol}_g-\frac{1}{2}|G|^2\mathrm{dvol}_g\right)&=\biggl(R_{\mu\nu}-\frac{1}{2}g_{\mu\nu}R\\
&\quad -\frac{1}{2}\langle i_{\partial_\mu}G,i_{\partial_\nu}G\rangle+\frac{1}{4}g_{\mu\nu}|G|^2\biggr)\mathrm{dvol}_g\delta g^{\mu\nu} +\mathrm{td}.
\end{align*}
This implies the Einstein equation. To prove the Einstein equation in Ricci form, we calculate the scalar curvature:
\begin{align*}
-\frac{9}{2}R&=R-\frac{11}{2}R=g^{\mu\nu}\left(R_{\mu\nu}-\frac{1}{2}g_{\mu\nu}R\right)\\
&=\frac{1}{2}g^{\mu\nu}\langle i_{\partial_\mu}G,i_{\partial_\nu}G\rangle-\frac{11}{4}|G|^2\\
&=2|G|^2-\frac{11}{4}|G|^2\\
&=-\frac{3}{4}|G|^2.
\end{align*}
Hence
\begin{equation*}
R=\frac{1}{6}|G|^2,
\end{equation*}
which implies the Einstein equation in Ricci form.

To prove the Maxwell equation we consider variations of $C$. Then according to Lemma \ref{lem:var k-form form C G=dC}
\begin{equation*}
\delta(|G|^2\mathrm{dvol}_g)=\delta(G\wedge *G)=2\delta C\wedge d*G+\mathrm{td}.
\end{equation*}
We have
\begin{align*}
\delta(C\wedge G\wedge G)&=\delta C\wedge G\wedge G+C\wedge \delta dC\wedge G+C\wedge G\wedge\delta dC\\
&=\delta C\wedge G\wedge G+2C\wedge d\delta C\wedge G\\
&=\delta C\wedge G\wedge G-2d(C\wedge \delta C\wedge G)+2 dC\wedge \delta C\wedge G\\
&=d(\ldots)+3\delta C\wedge G\wedge G.
\end{align*}
This implies
\begin{equation*}
\delta\left(-\frac{1}{4\kappa_{11}^2}G\wedge *G-\frac{1}{12\kappa_{11}^2}C\wedge G\wedge G\right)=-\frac{1}{2\kappa_{11}^2}\delta C\wedge \left(d*G+\frac{1}{2}G\wedge G\right)+\mathrm{td}
\end{equation*}
und hence the Maxwell equation
\begin{equation*}
d*G+\frac{1}{2}G\wedge G=0.
\end{equation*}
\end{proof}

Along the way we saw: 
\begin{cor} The scalar curvature of a bosonic solution to the Einstein equation is given by
\begin{greybox}
\begin{equation*}
R=\frac{1}{6}|G|^2.
\end{equation*}
\end{greybox}
\end{cor}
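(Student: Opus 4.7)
The plan is to obtain the scalar curvature by tracing the Einstein equation with the inverse metric $g^{\mu\nu}$ and then invoking the algebraic identity from the earlier lemma that relates $g^{\mu\nu}\langle i_{\partial_\mu}G, i_{\partial_\nu}G\rangle$ to $|G|^2$.

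First I would contract the Einstein equation
\begin{equation*}
R_{\mu\nu} - \tfrac{1}{2} g_{\mu\nu} R = \tfrac{1}{2}\langle i_{\partial_\mu}G, i_{\partial_\nu}G\rangle - \tfrac{1}{4} g_{\mu\nu}|G|^2
\end{equation*}
with $g^{\mu\nu}$. On the left hand side, since $\dim M = 11$, we have $g^{\mu\nu}g_{\mu\nu} = 11$, so the trace produces $R - \tfrac{11}{2} R = -\tfrac{9}{2} R$. On the right hand side, the trace of the second term gives $-\tfrac{11}{4}|G|^2$, and the trace of the first term is $\tfrac{1}{2} g^{\mu\nu}\langle i_{\partial_\mu}G, i_{\partial_\nu}G\rangle$.

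Next I would apply the lemma $g^{\mu\nu}\langle i_{\partial_\mu}G, i_{\partial_\nu}G\rangle = k|G|^2$ with $k=4$, so this term becomes $2|G|^2$. Combining gives $-\tfrac{9}{2} R = 2|G|^2 - \tfrac{11}{4}|G|^2 = -\tfrac{3}{4}|G|^2$, and dividing through yields $R = \tfrac{1}{6}|G|^2$ as claimed.

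There is essentially no obstacle here: the calculation is a direct trace, and in fact it already appears verbatim inside the proof of the theorem above (as the intermediate step used to pass from the Einstein equation to its Ricci form). The corollary simply extracts that intermediate identity as a standalone statement. Alternatively one could trace the Ricci form of the Einstein equation directly: tracing $R_{\mu\nu} = \tfrac{1}{2}\langle i_{\partial_\mu}G, i_{\partial_\nu}G\rangle - \tfrac{1}{6}g_{\mu\nu}|G|^2$ gives $R = 2|G|^2 - \tfrac{11}{6}|G|^2 = \tfrac{1}{6}|G|^2$, providing a one-line independent check.
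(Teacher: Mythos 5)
Your proposal is correct and coincides with the paper's own derivation: the paper obtains $R=\frac{1}{6}|G|^2$ inside the proof of the theorem by tracing the Einstein equation with $g^{\mu\nu}$, using $g^{\mu\nu}g_{\mu\nu}=11$ and the lemma $g^{\mu\nu}\langle i_{\partial_\mu}G,i_{\partial_\nu}G\rangle=4|G|^2$ to get $-\tfrac{9}{2}R=-\tfrac{3}{4}|G|^2$, exactly as you do. Your one-line check via the Ricci form is a harmless bonus but adds nothing essentially new.
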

\begin{rem}
If we use the action $S'$ with a plus sign in front of the Chern-Simons term, then the Einstein equation stays the same and the Maxwell equation becomes $d*G=\frac{1}{2}G\wedge G$.
\end{rem}

\subsection{Supersymmetry}\label{sect:M-theory susy}

In addition to the bosonic fields the eleven-dimensional supergravity theory contains one Majorana gravitino. We consider a Majorana spinor $\varep$. The following statement can be found, for example, in \cite{GaPa} and \cite{FigS}.
\begin{thm} A bosonic solution to the field equations of eleven-dimensional supergravity is supersymmetric if and only if there exists a non-zero Majorana spinor $\varep$ on $M$ that satisfies the {\bf gravitino Killing spinor equation}
\begin{greybox}
\begin{equation*}
\nabla_X\varep+\frac{1}{12}(X^\flat\wedge G-2i_XG)\cdot\varep=0,
\end{equation*}
where $X^\flat=g(X,\cdot)$ is the $1$-form associated to the vector field $X$.
\end{greybox}
\end{thm}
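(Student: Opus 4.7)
The plan is to derive the gravitino Killing spinor equation from the supersymmetry transformation law of the Majorana gravitino $\psi_\mu$, which is standard in the physics literature (originally in \cite{CJS}, see also \cite{BBS}). Since we have set all fermionic fields to zero, the supersymmetry variation of any bosonic field is proportional to a product of the spinor parameter $\varep$ with a fermion and therefore vanishes automatically. Consequently, a bosonic solution is supersymmetric if and only if the variation of every fermionic field also vanishes, and in dimension eleven the only fermionic field is the gravitino. The problem thus reduces to showing that the vanishing of $\delta_\varep \psi_\mu$ is equivalent, for some non-zero $\varep$, to the stated coordinate-free equation.

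First I would record the standard physics expression for the supersymmetry variation of the gravitino, which (up to sign conventions matching our choice of $C$ versus $-C$) reads
\[
\delta_\varep \psi_\mu = \nabla_\mu \varep + \frac{1}{288}\bigl(\Gamma_\mu{}^{\nu\rho\sigma\lambda} - 8\,\delta_\mu^\nu\,\Gamma^{\rho\sigma\lambda}\bigr) G_{\nu\rho\sigma\lambda}\,\varep.
\]
Imposing $\delta_\varep \psi_\mu = 0$ yields a Killing spinor equation in index notation; the remaining task is to recognize the two contractions on the right-hand side as physical Clifford multiplications by the five-form $\partial_\mu^\flat \wedge G$ and by the three-form $i_{\partial_\mu}G$, respectively. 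The essential tools here are the two Clifford-algebraic identities recorded in Section~\ref{sect:lin alg spinors}, namely
\[
(\alpha \wedge F)\cdot \psi = \alpha \cdot (F\cdot \psi) - (i_{\alpha^\sharp}F)\cdot \psi
\]
and, for a $k$-form $F$,
\[
X\cdot(F\cdot\psi) + (-1)^{k+1} F\cdot(X\cdot\psi) = (2 i_X F)\cdot\psi,
\]
applied with $X = \partial_\mu$ and $F = G$, so that $k=4$.

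The main obstacle will be careful bookkeeping rather than any conceptual step: keeping track of the difference between physical and mathematical Clifford multiplication as fixed in Section~\ref{sect:lin alg spinors}, of the sign conventions in the definition of $\Gamma^{a_1\ldots a_k}$, and of the factorials $1/k!$ built into $F\cdot\varep$, while checking that the overall coefficient $1/12$ is compatible with the normalization of the Chern--Simons term in Definition~\ref{defn:11d supergravity}. Once these matches are in place, the right-hand side of the displayed variation rearranges into $\frac{1}{12}\bigl(\partial_\mu^\flat \wedge G - 2\,i_{\partial_\mu}G\bigr)\cdot\varep$. Since this expression is $C^\infty(M)$-linear and tensorial in $X=\partial_\mu$, the equation promotes to the coordinate-free identity
\[
\nabla_X \varep + \frac{1}{12}\bigl(X^\flat \wedge G - 2\,i_X G\bigr)\cdot \varep = 0
\]
for every vector field $X$, and the ``if and only if'' is then immediate from the reduction above.
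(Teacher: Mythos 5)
Your proposal is correct and is essentially the argument the paper relies on: the paper itself offers no derivation for this theorem, only the citation to \cite{GaPa} and \cite{FigS} together with the remark that the equivalent forms from \cite{BBS} follow from the Clifford identities of Section \ref{sect:lin alg spinors}, and your route (reduce supersymmetry of a bosonic background to $\delta_\varep\psi_\mu=0$, quote the standard variation $\nabla_\mu\varep+\tfrac{1}{288}(\Gamma_\mu{}^{\nu\rho\sigma\lambda}-8\delta_\mu^\nu\Gamma^{\rho\sigma\lambda})G_{\nu\rho\sigma\lambda}\varep$, then rewrite it in coordinate-free form) is exactly what those references do. The bookkeeping you defer does check out: with the paper's normalization $F\cdot\varep=\tfrac{1}{k!}F_{a_1\ldots a_k}\Gamma^{a_1\ldots a_k}\varep$ one finds $(X^\flat\wedge G)\cdot\varep=\tfrac{1}{24}X^\mu\Gamma_\mu{}^{\nu\rho\sigma\lambda}G_{\nu\rho\sigma\lambda}\varep$ and $(i_XG)\cdot\varep=\tfrac{1}{6}X^\mu G_{\mu\rho\sigma\lambda}\Gamma^{\rho\sigma\lambda}\varep$, so $\tfrac{1}{12}(X^\flat\wedge G-2i_XG)\cdot\varep$ reproduces the $\tfrac{1}{288}$-term exactly.
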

As before, $\nabla_X\varep$ denotes the covariant derivative, determined by the Levi-Civita connection, applied to $\varep$ and the equation is supposed to hold for all vector fields $X$. Note that Clifford multiplication in dimension $11$ involves the choice of a sign, see Section \ref{sect:spinors in dim 10 and 11}. The space of solutions to the gravitino Killing spinor equation has real dimension between $0$ and $32$.
 
Equivalent Killing spinor equations, stated in \cite{BBS} on page 306 and 312, respectively, are
\begin{greybox}
\begin{equation*}
\nabla_X\varep+\frac{1}{12}\left(X\cdot(G\cdot\varep)-3(i_X G)\cdot\varep\right)=0
\end{equation*}
\end{greybox}
and
\begin{greybox}
\begin{equation*}
\nabla_X\varep+\frac{1}{24}\left(3G\cdot(X\cdot\varep)-X\cdot(G\cdot\varep)\right)=0.
\end{equation*}
\end{greybox}
These equations follow with the identities mentioned in Section \ref{sect:lin alg spinors}.

\subsection{M-theory and anomaly cancellation}

The Lagrangian and Maxwell equation in M-theory receive corrections of higher order in the curvature of the metric, due to anomaly cancellation \cite{DLM}, \cite{BM}, \cite{GaPa}, \cite{FigS}. The action is modified by an additional summand
\begin{equation*}
-T_2\int_MC\wedge X_8,
\end{equation*}
known as the Green-Schwarz term, where $X_8$ is an $8$-form on $M$, of fourth order in the curvature $R$ of the metric $g$.
\begin{defn}The first order correction to the action in M-theory is given by
\begin{greybox}
\begin{equation*}
S_M=\frac{1}{2\kappa_{11}^2}\int_M \left(R\mathrm{dvol}_g-\frac{1}{2}|G|^2-\frac{1}{6}C\wedge G\wedge G\right)-T_2\int_MC\wedge X_8,
\end{equation*}
where
\begin{align*}
X_8&=\frac{1}{192}(p_1^2-4p_2)\\
T_2&=\left(\frac{2\pi^2}{\kappa_{11}^2}\right)^{1/3}
\end{align*}
and $p_1$ and $p_2$ are the first and second Pontryagin forms of the Riemann curvature $R$:
\begin{align*}
p_1&=-\frac{1}{8\pi^2}\mathrm{tr}\,\Omega^2\in\Omega^4(M)\\
p_2&=-\frac{1}{64\pi^4}\mathrm{tr}\,\Omega^4+\frac{1}{128\pi^4}(\mathrm{tr}\,\Omega^2)^2\in\Omega^8(M).
\end{align*}
\end{greybox}
\end{defn}
Here $\Omega$ is the local curvature form, a matrix of local $2$-forms $\Omega_{ab}$, defined on a Lorentz manifold of dimension $(n+1)$ by
\begin{equation*}
R(X,Y)e_a=\sum_{b=0}^{n}\Omega_{ab}(X,Y)e_b,
\end{equation*}
where $\{e_a\}$ is a local frame for the tangent bundle $TM$, for example, given by local coordinate vector fields $\{\partial_\mu\}$. In the matrix product $\Omega^k$ the $2$-form entries of $\Omega$ are multiplied with the wedge product (which is commutative on $2$-forms). Furthermore, $\mathrm{tr}$ denotes the standard trace as a matrix and we set
\begin{equation*}
(\mathrm{tr}\,\Omega^2)^2=(\mathrm{tr}\,\Omega^2)\wedge(\mathrm{tr}\,\Omega^2).
\end{equation*}
Even though the curvature form $\Omega$ is only defined locally, the Pontryagin forms are globally well-defined, independent of the choice of local frame.
\begin{cor}
The Maxwell equation changes in M-theory to
\begin{greybox}
\begin{equation*}
d*G+\frac{1}{2}G\wedge G=-\beta X_8,
\end{equation*}
where 
\begin{equation*}
\beta=2\kappa_{11}^2T_2=(4\pi\kappa_{11}^2)^{2/3}.
\end{equation*}
\end{greybox}
\end{cor}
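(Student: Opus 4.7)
The plan is to mimic the derivation of the original Maxwell equation, varying the corrected action $S_M$ with respect to $C$, and then just track the new contribution coming from the Green--Schwarz term. The key observation is that $X_8$ is built entirely from the Riemann curvature $\Omega$ of $g$ (via the Pontryagin forms $p_1$ and $p_2$), hence depends only on the metric and not on the $3$-form potential $C$. Consequently, under a variation $\delta C$ we have $\delta X_8 = 0$ and
\begin{equation*}
\delta(C\wedge X_8)=\delta C\wedge X_8.
\end{equation*}

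First I would note that the Einstein--Hilbert piece $R\,\mathrm{dvol}_g$ and the kinetic piece $|G|^2\mathrm{dvol}_g=G\wedge *G$ contribute to $\delta_C S_M$ exactly as in the previous theorem: the metric term is $C$-independent, and Lemma \ref{lem:var k-form form C G=dC} together with the Chern--Simons calculation performed in the proof of the Maxwell equation yields
\begin{equation*}
\delta\!\left(-\frac{1}{4\kappa_{11}^2}G\wedge *G-\frac{1}{12\kappa_{11}^2}C\wedge G\wedge G\right)
=-\frac{1}{2\kappa_{11}^2}\,\delta C\wedge\!\left(d*G+\tfrac{1}{2}G\wedge G\right)+\mathrm{td}.
\end{equation*}

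Next I would add the variation of the Green--Schwarz term, which by the observation above is simply $-T_2\,\delta C\wedge X_8$. Summing,
\begin{equation*}
\delta_C S_M=-\int_M \delta C\wedge\!\left(\frac{1}{2\kappa_{11}^2}\!\left(d*G+\tfrac{1}{2}G\wedge G\right)+T_2 X_8\right)+\mathrm{td}.
\end{equation*}
Setting the right-hand side to zero for all compactly supported variations $\delta C$ and multiplying through by $2\kappa_{11}^2$ produces
\begin{equation*}
d*G+\tfrac{1}{2}G\wedge G=-2\kappa_{11}^2 T_2\, X_8=-\beta X_8,
\end{equation*}
which is the claimed corrected Maxwell equation, with $\beta=2\kappa_{11}^2 T_2=(4\pi\kappa_{11}^2)^{2/3}$ by the definition of $T_2$.

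There is essentially no obstacle here beyond bookkeeping; the only conceptual point worth emphasizing (and the step one might get wrong) is that $X_8$ contains no $C$-dependence, so varying $C$ leaves $X_8$ untouched and the Green--Schwarz term contributes linearly in $\delta C$. Everything else is a direct reuse of Lemma \ref{lem:var k-form form C G=dC} and the Chern--Simons integration by parts already performed above.
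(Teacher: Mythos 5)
Your proposal is correct and follows exactly the route the paper intends (the corollary is stated without an explicit proof, but it is a direct consequence of the variation already computed for the uncorrected Maxwell equation): you reuse the $\delta C$-variation of the kinetic and Chern--Simons terms and add the linear contribution $-T_2\,\delta C\wedge X_8$ of the Green--Schwarz term, observing that $X_8$ is metric-built and hence $C$-independent. The sign, the coefficient $\beta=2\kappa_{11}^2T_2=(4\pi\kappa_{11}^2)^{2/3}$, and the bookkeeping all check out.
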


\section{Supergravity type IIA}\label{sect: sugra IIA}

We now consider the $N=2$ supergravity theories in ten dimensions. We begin with the IIA theory.

\subsection{String frame}
\begin{defn}
The bosonic part of the action in the string frame for the ten-dimensional supergravity theory of type IIA on a spacetime $M^{10}$ is given by
\begin{greybox}
\begin{equation*}
S=S_{NS}+S_{RIIA}+S_{CSIIA}
\end{equation*}
with
\begin{align*}
S_{NS}&=\frac{1}{2\kappa_{10}^2}\int_M\mathrm{dvol}_ge^{-2\phi}\left(R+4|d\phi|^2-\frac{1}{2}|H_3|^2\right)\\
S_{RIIA}&=-\frac{1}{4\kappa_{10}^2}\int_M\mathrm{dvol}_g\left(|G_2|^2+|\tG{4}|^2\right)\\
S_{CSIIA}&=-\frac{1}{4\kappa_{10}^2}\int_M B_2\wedge G_4\wedge G_4.
\end{align*}
\end{greybox}
Here
\begin{greybox}
\begin{enumerate}
\item $g$ is a Lorentz metric on $M$
\item $\phi$ is a real scalar field (the dilaton)
\item $B_2$ is a $2$-form (the Neveu-Schwarz $B$-field)
\item $C_1$ and $C_3$ are a $1$-form and a $3$-form (Ramond-Ramond potentials).
\end{enumerate}
\end{greybox}
We also define the field strengths
\begin{greybox}
\begin{align*}
H_3&=dB_2\\
G_2&=dC_1\\
G_4&=dC_3
\end{align*}
and
\begin{equation*}
\tG{4}=G_4-C_1\wedge H_3.
\end{equation*}
\end{greybox}
\end{defn}
The field strengths $H_3,G_2,\tG{4}$ and the action are invariant under the generalized gauge transformations 
\begin{align*}
B_2'&=B_2+d\zeta_1\\
C_1'&=C_1+d\Lambda_0\\
C_3'&=C_3+d\Lambda_2+H_3\Lambda_0
\end{align*}
for differential forms $\zeta_1,\Lambda_0,\Lambda_2$.

\begin{thm} In the string frame the bosonic equations of motion of type IIA supergravity are given by:
\begin{enumerate}
\item {\bf Einstein equation:}
\begin{greybox}
\begin{align*}
\mathrm{Ric}(X,Y)-\frac{1}{2}g(X,Y)R&=-2H^\phi(X,Y)+2\left(\Delta\phi-|d\phi|^2\right)g(X,Y)\\
&\quad+\frac{1}{2}\left(\langle i_{X}H_3,i_{Y}H_3\rangle-\frac{1}{2}g(X,Y)|H_3|^2\right)\\
&\quad + \frac{1}{2}e^{2\phi}\left(\langle i_XG_2,i_YG_2\rangle-\frac{1}{2}g(X,Y)|G_2|^2\right)\\
&\quad +\frac{1}{2}e^{2\phi}\left(\langle i_{X}\tG{4},i_{Y}\tG{4}\rangle-\frac{1}{2}g(X,Y)|\tG{4}|^2\right)
\end{align*}
\end{greybox}
or, equivalently, the {\bf Einstein equation in Ricci form:}
\begin{greybox}
\begin{align*}
\mathrm{Ric}(X,Y)&=-2H^\phi(X,Y)-\frac{1}{4}\left(\Delta\phi-2|d\phi|^2 \right)g(X,Y)\\
&\quad +\frac{1}{2}\left(\langle i_XH_3,i_YH_3\rangle-\frac{1}{4}g(X,Y)|H_3|^2\right)\\
&\quad +\frac{1}{2}e^{2\phi}\left(\langle i_XG_2,i_YG_2\rangle-\frac{1}{8}g(X,Y)|G_2|^2\right)\\
&\quad  +\frac{1}{2}e^{2\phi}\left(\langle i_{X}\tG{4},i_{Y}\tG{4}\rangle-\frac{3}{8}g(X,Y)|\tG{4}|^2\right).
\end{align*}
\end{greybox}
\item {\bf Dilaton equation:}
\begin{greybox}
\begin{equation*}
\Delta\phi=2|d\phi|^2-\frac{1}{2}|H_3|^2+\frac{3}{4}e^{2\phi}|G_2|^2+\frac{1}{4}e^{2\phi}|\tG{4}|^2.
\end{equation*}
\end{greybox}
\item Using the dilaton equation we can write the Einstein equation in Ricci form as
\begin{greybox}
\begin{align*}
  \mathrm{Ric}(X,Y)&=-2H^\phi(X,Y)+\frac{1}{2}\langle i_XH_3,i_YH_3\rangle\\
&\quad +\frac{1}{2}e^{2\phi}\left(\langle i_XG_2,i_YG_2\rangle-\frac{1}{2}g(X,Y)|G_2|^2\right)\\
&\quad  +\frac{1}{2}e^{2\phi}\left(\langle i_{X}\tG{4},i_{Y}\tG{4}\rangle-\frac{1}{2}g(X,Y)|\tG{4}|^2\right).
\end{align*}
\end{greybox}
\item {\bf Maxwell equations:}
\begin{greybox}
\begin{align*}
d*G_2&=H_3\wedge *\tG{4}\\
d*\tG{4}&=-H_3\wedge \tG{4}\\
d\left(e^{-2\phi}*H_3\right)&=\frac{1}{2}\tG{4}\wedge\tG{4}-G_2\wedge*\tG{4}.
\end{align*}
\end{greybox}
\item If we assume that the potentials $B_2,C_1,C_3$ exist only locally, we add the {\bf Bianchi identities:}
\begin{greybox}
\begin{align*}
dH_3&=0\\
dG_2&=0\\
d\tG{4}&=H_3\wedge G_2.
\end{align*}
\end{greybox}
\end{enumerate}
\end{thm}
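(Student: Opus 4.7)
My plan is to derive each piece of the theorem by the standard variational recipe, treating the five independent bosonic fields $g,\phi,B_2,C_1,C_3$ one at a time and reading off the equation of motion as the Euler-Lagrange expression. The Bianchi identities are immediate: $dH_3 = d^2 B_2 = 0$, $dG_2 = d^2 C_1 = 0$, and $d\tilde{G}_4 = dG_4 - dC_1\wedge H_3 + C_1\wedge dH_3 = -G_2\wedge H_3$, which, up to the graded-commutation sign for a $2$-form and a $3$-form, is the stated $H_3\wedge G_2$ (the sign amounts to a convention in the paper).

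For the Einstein equation I would vary $g^{\mu\nu}$. The Chern-Simons piece $S_{CSIIA}$ is metric-independent and drops out. For $S_{NS}$ I apply Lemma \ref{lem:var Hilbert Einstein dilaton} to the $e^{-2\phi}R\,\mathrm{dvol}_g$ term; for the $e^{-2\phi}(4|d\phi|^2)\mathrm{dvol}_g$ term I combine $\delta|d\phi|^2 = (\partial_\mu\phi)(\partial_\nu\phi)\delta g^{\mu\nu}$ with $\delta\mathrm{dvol}_g$ from Lemma \ref{lem:var g}; for the $-\tfrac{1}{2}e^{-2\phi}|H_3|^2\mathrm{dvol}_g$ term and for the $|G_2|^2, |\tilde{G}_4|^2$ contributions in $S_{RIIA}$ I apply Lemma \ref{lem:variation of form}. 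The terms involving $\nabla_\mu\partial_\nu\phi$ and $\Delta\phi$ coming from Lemma \ref{lem:var Hilbert Einstein dilaton} cancel precisely against the ones produced by integration by parts on the dilaton kinetic term, leaving the Hessian $H^\phi$ and the remaining pieces in the stated form.

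For the dilaton equation I vary $\phi$, which only affects $S_{NS}$. The $R$ and $|H_3|^2$ terms contribute multiplicatively through $e^{-2\phi}$, while the kinetic term needs one integration by parts (carrying the factor $e^{-2\phi}$ through, which produces an extra $2|d\phi|^2$ contribution). The raw Euler-Lagrange equation reads $\Delta\phi = -\tfrac{1}{4}R + |d\phi|^2 + \tfrac{1}{8}|H_3|^2$; it still contains $R$. To eliminate $R$, I take the $g^{\mu\nu}$-trace of the Einstein equation in dimension ten, using $g^{\mu\nu}g_{\mu\nu} = 10$, $g^{\mu\nu}H^\phi_{\mu\nu} = \Delta\phi$, and the identity $g^{\mu\nu}\langle i_{\partial_\mu}G, i_{\partial_\nu}G\rangle = k|G|^2$ for a $k$-form. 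Solving for $R$ and substituting yields the stated form of the dilaton equation, and feeding that expression for $\Delta\phi$ back into the Einstein equation produces the Ricci-form variant.

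The Maxwell equations are the main source of bookkeeping. Varying $C_1$ affects $G_2 = dC_1$ in $S_{RIIA}$ and $\tilde{G}_4$ through the term $-C_1\wedge H_3$; varying $C_3$ affects only $G_4\subset \tilde{G}_4$ in $S_{RIIA}$ together with $S_{CSIIA}$; varying $B_2$ is the hardest, because $B_2$ enters through $H_3$ in $S_{NS}$, through the $C_1\wedge H_3$ piece of $\tilde{G}_4$ in $S_{RIIA}$, and through $S_{CSIIA}$. In each case I use Lemma \ref{lem:var k-form form C G=dC} (or its obvious modification for $\tilde{G}_4$, which is not exact but still has variation $d\delta C_3$ or $-C_1\wedge d\delta B_2$), perform integration by parts, and collect. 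The one extra subtlety is the $e^{-2\phi}$ factor in $S_{NS}$: when integrating $e^{-2\phi}d\delta B_2\wedge *H_3$ by parts, the derivative also falls on $e^{-2\phi}$, combining with the $e^{-2\phi}d*H_3$ term into $d(e^{-2\phi}*H_3)$, which is exactly what appears in the third Maxwell equation. The Chern-Simons term contributes the $G_4\wedge G_4$ and $\tilde G_4\wedge \tilde G_4$ corrections on the right-hand side. I expect the main obstacle to be organizing the triple contribution to the $B_2$-equation and tracking the wedge-product signs so that the final expressions combine into the compact forms involving $d(e^{-2\phi}*H_3)$ and $\tilde G_4$.
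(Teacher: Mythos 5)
Your overall strategy is the paper's own: vary $g$, $\phi$, $C_1$, $C_3$, $B_2$ separately, use Lemma \ref{lem:var Hilbert Einstein dilaton} and Lemma \ref{lem:variation of form} for the metric variation, and eliminate $R$ from the raw dilaton equation $\Delta\phi=-\tfrac{1}{4}R+|d\phi|^2+\tfrac{1}{8}|H_3|^2$ (which is exactly the paper's intermediate identity) by tracing the Einstein equation in dimension ten. However, your bookkeeping for the Einstein equation is wrong as described: under a variation of the metric there is no integration by parts on the dilaton kinetic term at all --- the variation of $4e^{-2\phi}|d\phi|^2\,\mathrm{dvol}_g$ is purely algebraic (Lemma \ref{lem:variation of form} applied to the $1$-form $d\phi$) --- and the Hessian and Laplacian terms produced by Lemma \ref{lem:var Hilbert Einstein dilaton} do not cancel against anything: they survive and are precisely the $-2H^\phi(X,Y)+2\Delta\phi\,g(X,Y)$ terms on the right-hand side of the stated equation. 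What cancels is only the algebraic term $-4(\partial_\mu\phi)(\partial_\nu\phi)$ of Lemma \ref{lem:var Hilbert Einstein dilaton} against $+4(\partial_\mu\phi)(\partial_\nu\phi)$ from the kinetic term, together with part of the $|d\phi|^2 g_{\mu\nu}$ terms. Taken literally, your claimed cancellation would delete exactly the second-derivative terms that the theorem retains.

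Second, your reconciliation of the last Bianchi identity does not work: a $2$-form and a $3$-form commute under the wedge product, $G_2\wedge H_3=(-1)^{2\cdot3}H_3\wedge G_2=+H_3\wedge G_2$, so the graded-commutation sign cannot turn your (correct) computation $d\tilde{G}_4=-G_2\wedge H_3=-H_3\wedge G_2$ into the stated $+H_3\wedge G_2$; this is a genuine sign discrepancy, not a convention to be waved away (and the minus sign is the one compatible with the paper's own remark that this identity is equivalent to $dG_4=0$, since $dG_4=d\tilde{G}_4+G_2\wedge H_3$). Finally, for the Maxwell equation obtained from $B_2$, integration by parts and collecting terms is not enough: the variation of $|\tilde{G}_4|^2$ leaves a residual term proportional to $C_1\wedge d*\tilde{G}_4$, and the Chern--Simons variation produces $G_4\wedge G_4$; to reach the stated compact form you must, as the paper does, insert the previously derived equation $d*\tilde{G}_4=-H_3\wedge\tilde{G}_4$ and the identity $G_4\wedge G_4=\tilde{G}_4\wedge\tilde{G}_4+2C_1\wedge H_3\wedge\tilde{G}_4$. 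That substitution step should be stated explicitly rather than subsumed under ``collect''.
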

\begin{rem}
The string frame field equations where $G_2$ and $\tG{4}$ are set to zero are classical and can be found, for example, in \cite{CFMP}.
\end{rem}
We prove the Einstein equation.
\begin{proof}
Under variations of the metric $g$ we have by Lemma \ref{lem:var Hilbert Einstein dilaton}
\begin{align*}
\delta\left(e^{-2\phi}R\mathrm{dvol}_g\right)&=\mathrm{td}+\mathrm{dvol}_ge^{-2\phi}\biggl(R_{\mu\nu}-\frac{1}{2}g_{\mu\nu}R-4(\partial_\mu\phi)(\partial_\nu\phi)+2H^\phi(\partial_\mu,\partial_\nu)\\
&\hspace{3cm} +4|d\phi|^2g_{\mu\nu}-2\Delta\phi g_{\mu\nu}\biggr)\delta g^{\mu\nu}
\end{align*}
and by Lemma \ref{lem:variation of form}
\begin{align*}
\frac{\delta|d\phi|^2\mathrm{dvol}_g}{\delta g^{\mu\nu}}&=\left((\partial_\mu\phi)(\partial_\nu\phi)-\frac{1}{2}g_{\mu\nu}|d\phi|^2\right)\mathrm{dvol}_g\\
\frac{\delta |H_3|^2\mathrm{dvol}_g}{\delta g^{\mu\nu}}&=\left(\langle i_{\partial_\mu}H_3,i_{\partial_\nu}H_3\rangle-\frac{1}{2}g_{\mu\nu}|H_3|^2\right)\mathrm{dvol}_g\\
\frac{\delta |G_2|^2\mathrm{dvol}_g}{\delta g^{\mu\nu}}&=\left(\langle i_{\partial_\mu}G_2,i_{\partial_\nu}G_2\rangle-\frac{1}{2}g_{\mu\nu}|G_2|^2\right)\mathrm{dvol}_g\\
\frac{\delta |\tG{4}|^2\mathrm{dvol}_g}{\delta g^{\mu\nu}}&=\left(\langle i_{\partial_\mu}\tG{4},i_{\partial_\nu}\tG{4}\rangle-\frac{1}{2}g_{\mu\nu}|\tG{4}|^2\right)\mathrm{dvol}_g.
\end{align*}
This implies the Einstein equation. We now calculate the scalar curvature:
\begin{align*}
-4R&=R-5R=g^{\mu\nu}\left(R_{\mu\nu}-\frac{1}{2}R\right)\\
&=18\Delta\phi-20|d\phi|^2+\frac{1}{2}\left(3|H_3|^2-5|H_3|^2\right)\\
&\quad + \frac{1}{2}e^{2\phi}\left(2|G_2|^2-5|G_2|^2\right)+\frac{1}{2}e^{2\phi}\left(4|\tG{4}|^2-5|\tG{4}|^2\right)\\
&=18\Delta\phi-20|d\phi|^2-|H_3|^2-\frac{3}{2}e^{2\phi}|G_2|^2-\frac{1}{2}e^{2\phi}|\tG{4}|^2.
\end{align*}
This implies
\begin{equation*}
R=-\frac{9}{2}\Delta\phi+5|d\phi|^2+\frac{1}{4}|H_3|^2+\frac{3}{8}e^{2\phi}|G_2|^2+\frac{1}{8}e^{2\phi}|\tG{4}|^2
\end{equation*}
and hence the Einstein equation in Ricci form.
\end{proof}
We prove the dilaton equation:
\begin{proof}
Under variations of $\phi$ we have
\begin{align*}
\delta\left(\mathrm{dvol}_ge^{-2\phi}\left(R+4|d\phi|^2-\frac{1}{2}|H_3|^2\right)  \right)&=\delta (e^{-2\phi})\left(\mathrm{dvol}_g\left(R+4|d\phi|^2-\frac{1}{2}|H_3|^2\right)\right)\\
&\quad+4e^{-2\phi}\delta (d\phi\wedge*d\phi)\\
&=-2e^{-2\phi}\delta\phi\left(\mathrm{dvol}_g\left(R+4|d\phi|^2-\frac{1}{2}|H_3|^2\right)\right)\\
&\quad + 8e^{-2\phi}d\delta\phi\wedge*d\phi.
\end{align*}
We calculate
\begin{align*}
8e^{-2\phi}d\delta\phi\wedge*d\phi&=8d\left(\delta\phi e^{-2\phi}*d\phi\right)-8\delta\phi d\left(e^{-2\phi}*d\phi\right).
\end{align*}
Ignoring the total differential we get that the variation vanishes if and only if
\begin{equation*}
4d\left(e^{-2\phi}*d\phi\right)=-e^{-2\phi}\left(R+4|d\phi|^2-\frac{1}{2}|H_3|^2\right)\mathrm{dvol}_g.
\end{equation*}
Hence with the formula for the scalar curvature
\begin{align*}
d\left(e^{-2\phi}*d\phi\right)&=-\frac{1}{4}e^{-2\phi}\left(R+4|d\phi|^2-\frac{1}{2}|H_3|^2\right)\mathrm{dvol}_g\\
&=-\frac{1}{4}e^{-2\phi}\left(-\frac{9}{2}\Delta\phi+9|d\phi|^2-\frac{1}{4}|H_3|^2+\frac{3}{8}e^{2\phi}|G_2|^2+\frac{1}{8}e^{2\phi}|\tG{4}|^2\right)\mathrm{dvol}_g\\
&=\left(\frac{9}{8}e^{-2\phi}\Delta\phi-\frac{9}{4}e^{-2\phi}|d\phi|^2+\frac{1}{16}e^{-2\phi}|H_3|^2-\frac{3}{32}|G_2|^2-\frac{1}{32}|\tG{4}|^2\right)\mathrm{dvol}_g,
\end{align*}
Writing
\begin{equation*}
d\left(e^{-2\phi}*d\phi\right)=e^{-2\phi}\left(-2|d\phi|^2+\Delta\phi\right)\mathrm{dvol}_g
\end{equation*}
we get the dilaton equation.
\end{proof}
\begin{cor}
The scalar curvature of a bosonic solution to the IIA Einstein equation in the string frame is given by
\begin{greybox}
\begin{equation*}
R=-4|d\phi|^2+\frac{5}{2}|H_3|^2-3e^{2\phi}|G_2|^2-e^{2\phi}|\tG{4}|^2.
\end{equation*}
\end{greybox}
\end{cor}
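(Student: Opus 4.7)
The plan is to combine two facts already obtained inside the proofs that immediately precede the corollary, and simply eliminate $\Delta\phi$.

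First, I would recall that the proof of the Einstein equation in Ricci form produced the scalar curvature formula
\begin{equation*}
R=-\frac{9}{2}\Delta\phi+5|d\phi|^2+\frac{1}{4}|H_3|^2+\frac{3}{8}e^{2\phi}|G_2|^2+\frac{1}{8}e^{2\phi}|\tG{4}|^2,
\end{equation*}
obtained by tracing the Einstein equation against $g^{\mu\nu}$ in ten dimensions (using $g^{\mu\nu}g_{\mu\nu}=10$ and the identity $g^{\mu\nu}\langle i_{\partial_\mu}G,i_{\partial_\nu}G\rangle=k|G|^2$ from the lemma proved earlier). This formula still contains $\Delta\phi$, so on its own it is not yet the claimed expression.

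Second, I would invoke the already-established dilaton equation
\begin{equation*}
\Delta\phi=2|d\phi|^2-\frac{1}{2}|H_3|^2+\frac{3}{4}e^{2\phi}|G_2|^2+\frac{1}{4}e^{2\phi}|\tG{4}|^2
\end{equation*}
and substitute it into the previous display. Collecting coefficients gives $-9+5=-4$ for $|d\phi|^2$, $\tfrac{9}{4}+\tfrac{1}{4}=\tfrac{5}{2}$ for $|H_3|^2$, $-\tfrac{27}{8}+\tfrac{3}{8}=-3$ for $e^{2\phi}|G_2|^2$, and $-\tfrac{9}{8}+\tfrac{1}{8}=-1$ for $e^{2\phi}|\tG{4}|^2$, which is exactly the claimed identity.

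There is no real obstacle here: both ingredients have already been proved in the preceding two proofs, and the corollary is purely algebraic bookkeeping. The only thing worth double-checking is the sign and the factor of $\tfrac{9}{2}$ multiplying $\Delta\phi$ in the trace of the Einstein equation, since a slip there would propagate into every coefficient; beyond that, the result follows in one line.
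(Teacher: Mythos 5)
Your proposal is correct and follows exactly the route the paper intends: the trace formula $R=-\tfrac{9}{2}\Delta\phi+5|d\phi|^2+\tfrac{1}{4}|H_3|^2+\tfrac{3}{8}e^{2\phi}|G_2|^2+\tfrac{1}{8}e^{2\phi}|\tilde{G}_4|^2$ obtained in the Einstein-equation proof, combined with the dilaton equation to eliminate $\Delta\phi$. Your coefficient bookkeeping checks out, so nothing is missing.
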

We prove each Maxwell equation in a separate lemma.
\begin{lem} The Maxwell equation, obtained by varying $C_1$, is given by
\begin{equation*}
d*G_2=H_3\wedge *\tG{4}.
\end{equation*}
\end{lem}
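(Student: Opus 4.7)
The plan is to vary $C_1$ in the full action and extract the equation of motion using the fundamental lemma of the calculus of variations. The Chern-Simons term $S_{CSIIA} = -\frac{1}{4\kappa_{10}^2}\int_M B_2 \wedge G_4 \wedge G_4$ contains only $B_2$ and $C_3$, and the NS action $S_{NS}$ contains only $g$, $\phi$ and $B_2$, so the variation with respect to $C_1$ comes entirely from $S_{RIIA}$. In that term $C_1$ enters through $G_2 = dC_1$ and through $\tG{4} = G_4 - C_1 \wedge H_3$.

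First I would record the elementary variations: $\delta G_2 = d\,\delta C_1$ and $\delta \tG{4} = -\delta C_1 \wedge H_3$. For the $|G_2|^2$ piece I would apply Lemma \ref{lem:var k-form form C G=dC} with $k=1$, which gives
\begin{equation*}
\delta(G_2 \wedge *G_2) = 2\,\delta C_1 \wedge d*G_2 + \mathrm{td}.
\end{equation*}
For the $|\tG{4}|^2$ piece I would compute directly from $|\tG{4}|^2\mathrm{dvol}_g = \tG{4} \wedge *\tG{4}$:
\begin{equation*}
\delta(\tG{4} \wedge *\tG{4}) = 2\,\delta\tG{4}\wedge *\tG{4} = -2\,\delta C_1 \wedge H_3 \wedge *\tG{4}.
\end{equation*}
Here I am using the symmetry $\alpha \wedge *\beta = \beta \wedge *\alpha$ for forms of equal degree to move the variation to the left factor; the potential integration by parts is not needed, since $\delta\tG{4}$ is algebraic in $\delta C_1$.

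Combining these contributions,
\begin{equation*}
\delta S_{RIIA} = -\frac{1}{2\kappa_{10}^2}\int_M \delta C_1 \wedge \bigl(d*G_2 - H_3 \wedge *\tG{4}\bigr) + \mathrm{td}.
\end{equation*}
Since $\delta C_1$ is an arbitrary $1$-form (with compact support in a chart) and $d*G_2 - H_3 \wedge *\tG{4}$ is a $9$-form, the fundamental lemma of the calculus of variations yields $d*G_2 = H_3 \wedge *\tG{4}$, as claimed.

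The only real subtlety is a careful bookkeeping of signs when wedging the odd-degree forms $\delta C_1$ and $H_3$, together with checking degrees so that $\delta C_1$ is the leftmost factor in the final integrand. Once this is done, no further calculation is necessary: the Chern-Simons term drops out because it is independent of $C_1$, and the two contributions combine cleanly into the stated equation.
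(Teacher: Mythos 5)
Your proposal is correct and follows essentially the same route as the paper: vary $C_1$ only in $S_{RIIA}$, integrate by parts on the $G_2\wedge *G_2$ term (the paper does this directly, you invoke Lemma \ref{lem:var k-form form C G=dC} with $k=1$, which is the same computation), and use $\delta\tG{4}=-\delta C_1\wedge H_3$ together with the symmetry of the pairing for the $\tG{4}\wedge *\tG{4}$ term. The sign bookkeeping and the final combination $\delta C_1\wedge\bigl(d*G_2-H_3\wedge *\tG{4}\bigr)$ are exactly as in the paper.
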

\begin{proof}
We have under variations of $C_1$
\begin{align*}
\delta \left(G_2\wedge *G_2\right)&=2\delta dC_1\wedge *G_2\\
&=2d(\delta C_1 \wedge *G_2)+2\delta C_1\wedge d*G_2
\end{align*}
and
\begin{align*}
\delta \left(\tG{4}\wedge * \tG{4}\right)&=2\delta\tG{4}\wedge *\tG{4}\\
&=-2\delta C_1\wedge H_3\wedge *\tG{4}.
\end{align*}
This implies the equation.
\end{proof}
\begin{lem} The Maxwell equation, obtained by varying $C_3$, is given by
\begin{equation*}
d*\tG{4}=-H_3\wedge \tG{4}.
\end{equation*}
\end{lem}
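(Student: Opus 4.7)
The plan is to vary the action with respect to $C_3$ and collect the resulting terms. The $C_3$-dependence enters in two places: the kinetic term $-\frac{1}{4\kappa_{10}^2}\int_M \tG{4}\wedge *\tG{4}$ (through $\tG{4}=dC_3-C_1\wedge H_3$) and the Chern-Simons term $-\frac{1}{4\kappa_{10}^2}\int_M B_2\wedge G_4\wedge G_4$ (through $G_4=dC_3$). Since the piece $C_1\wedge H_3$ is $C_3$-independent, we have $\delta\tG{4}=\delta G_4=d\,\delta C_3$.

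For the kinetic term I would argue exactly as in Lemma \ref{lem:var k-form form C G=dC} with $k=3$: write $\delta(\tG{4}\wedge *\tG{4})=2\,d\,\delta C_3\wedge *\tG{4}$, then use $d(\delta C_3\wedge *\tG{4})=d\,\delta C_3\wedge *\tG{4}-\delta C_3\wedge d*\tG{4}$ (the sign comes from $\delta C_3$ being a $3$-form) to obtain
\begin{equation*}
\delta\bigl(\tG{4}\wedge *\tG{4}\bigr)=2\,\delta C_3\wedge d*\tG{4}+\mathrm{td}.
\end{equation*}

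For the Chern-Simons term, since $G_4$ and $\delta G_4$ are both $4$-forms they commute, so $\delta(B_2\wedge G_4\wedge G_4)=2\,B_2\wedge d\,\delta C_3\wedge G_4$. I would then integrate by parts via $d(B_2\wedge \delta C_3\wedge G_4)$, using $dB_2=H_3$ and $dG_4=0$, to extract $B_2\wedge d\,\delta C_3\wedge G_4=\mathrm{td}-H_3\wedge \delta C_3\wedge G_4$. Swapping the two odd-degree forms $H_3$ and $\delta C_3$ picks up a sign, giving $-H_3\wedge \delta C_3\wedge G_4=\delta C_3\wedge H_3\wedge G_4$. Finally I would replace $G_4$ by $\tG{4}$ in this last expression using $H_3\wedge G_4=H_3\wedge\tG{4}$, which holds because $H_3\wedge H_3=0$ for the odd-degree form $H_3$.

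Adding the two contributions and demanding the coefficient of the arbitrary variation $\delta C_3$ to vanish yields $d*\tG{4}+H_3\wedge\tG{4}=0$, as claimed. The only real obstacle is sign bookkeeping: each integration by parts and each graded transposition of wedge factors must be tracked carefully, and it is only after combining the two ends up with the correct cancellation producing the minus sign on the right-hand side.
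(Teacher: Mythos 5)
Your proposal is correct and follows essentially the same route as the paper: vary the kinetic term via the same integration by parts (with the sign from $\delta C_3$ being a $3$-form), vary the Chern-Simons term using $dB_2=H_3$, $dG_4=0$, the graded swap of $H_3$ and $\delta C_3$, and $H_3\wedge G_4=H_3\wedge\tG{4}$, then set the coefficient of $\delta C_3$ to zero. No gaps; the sign bookkeeping you describe matches the paper's computation exactly.
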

\begin{proof}
We have under variations of $C_3$
\begin{align*}
\delta \left(\tG{4}\wedge *\tG{4}\right)&=2\delta dC_3\wedge *\tG{4}\\
&=2d(\delta C_3 \wedge *\tG{4})+2\delta C_3\wedge d*\tG{4}
\end{align*}
and
\begin{align*}
\delta \left(B_2\wedge G_4\wedge G_4\right)&=2B_2\wedge \delta dC_3\wedge G_4\\
&=2d(B_2\wedge \delta C_3\wedge G_4)-2H_3\wedge \delta C_3\wedge G_4\\
&=d(\ldots)+2\delta C_3\wedge H_3\wedge G_4\\
&=d(\ldots)+2\delta C_3\wedge H_3\wedge \tG{4}.
\end{align*}
This implies the equation.
\end{proof}

\begin{lem} The Maxwell equation, obtained by varying $B_2$, is given by
\begin{equation*}
d\left(e^{-2\phi}*H_3\right)=\frac{1}{2}\tG{4}\wedge\tG{4}-G_2\wedge*\tG{4}.
\end{equation*}
\end{lem}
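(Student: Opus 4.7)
The plan is to extract the Euler--Lagrange equation for $B_2$ by varying each summand of $S = S_{NS} + S_{RIIA} + S_{CSIIA}$ that depends on $B_2$: the $|H_3|^2$ term of $S_{NS}$ (through $H_3 = dB_2$), the $|\tG{4}|^2$ term of $S_{RIIA}$ (through $\tG{4} = G_4 - C_1 \wedge H_3$), and the Chern--Simons density $B_2 \wedge G_4 \wedge G_4$.

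For $S_{NS}$, with $g$ and $\phi$ held fixed, $\delta(e^{-2\phi}\, H_3 \wedge *H_3) = 2 e^{-2\phi}\, d\delta B_2 \wedge *H_3$, and one integration by parts---treating $e^{-2\phi}*H_3$ as a single $8$-form---yields the contribution $\frac{1}{2\kappa_{10}^2}\int_M \delta B_2 \wedge d(e^{-2\phi}*H_3)$ modulo $\mathrm{td}$. For $S_{CSIIA}$ no integration by parts is needed, since $G_4 = dC_3$ is independent of $B_2$: one obtains directly $-\frac{1}{4\kappa_{10}^2}\int_M \delta B_2 \wedge G_4 \wedge G_4$. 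The $S_{RIIA}$ piece is the most intricate: $\delta\tG{4} = -C_1 \wedge d\delta B_2$ produces the term $\frac{1}{2\kappa_{10}^2}\int_M C_1 \wedge d\delta B_2 \wedge *\tG{4}$, which I would reorganise by the Leibniz identity $C_1 \wedge d\delta B_2 = G_2 \wedge \delta B_2 - d(C_1 \wedge \delta B_2)$ and then a second integration by parts on the exact piece, giving $\frac{1}{2\kappa_{10}^2}\int_M \delta B_2 \wedge \bigl(G_2 \wedge *\tG{4} - C_1 \wedge d*\tG{4}\bigr)$ modulo $\mathrm{td}$; all reorderings of $\delta B_2$ past $C_1$ and $G_2$ are trivial because $\delta B_2$ has even degree.

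Assembling the three contributions, the off-shell Euler--Lagrange equation reads $d(e^{-2\phi}*H_3) + G_2 \wedge *\tG{4} - C_1 \wedge d*\tG{4} - \tfrac{1}{2}G_4 \wedge G_4 = 0$. To reduce this to the stated form I would invoke the Maxwell equation for $C_3$ proved in the preceding lemma, $d*\tG{4} = -H_3 \wedge \tG{4}$, which converts $-C_1 \wedge d*\tG{4}$ into $C_1 \wedge H_3 \wedge \tG{4}$. Expanding $G_4 = \tG{4} + C_1 \wedge H_3$ and using that $C_1 \wedge C_1 = 0$ forces $(C_1 \wedge H_3) \wedge (C_1 \wedge H_3) = 0$, so $\tfrac{1}{2} G_4 \wedge G_4 = \tfrac{1}{2}\tG{4} \wedge \tG{4} + C_1 \wedge H_3 \wedge \tG{4}$; the two copies of $C_1 \wedge H_3 \wedge \tG{4}$ then cancel and the right-hand side collapses to $\tfrac{1}{2}\tG{4}\wedge\tG{4} - G_2 \wedge *\tG{4}$. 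The main obstacle is precisely this last algebraic collapse: one must keep track of the degree-dependent signs when commuting odd-degree factors, and recognise that the Chern--Simons contribution and the $C_1 \wedge d*\tG{4}$ piece conspire---via the $C_3$ equation of motion---to leave behind exactly the clean quadratic $\tfrac12 \tG{4}\wedge\tG{4}$.
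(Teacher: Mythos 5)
Your proposal is correct and follows essentially the same route as the paper: vary the $|H_3|^2$, $|\tG{4}|^2$ and Chern--Simons terms, integrate by parts (your Leibniz-identity reorganisation of $C_1\wedge d\delta B_2$ is exactly the paper's double integration by parts), then use the $C_3$ Maxwell equation $d*\tG{4}=-H_3\wedge\tG{4}$ together with $G_4\wedge G_4=\tG{4}\wedge\tG{4}+2C_1\wedge H_3\wedge\tG{4}$ to cancel the $C_1\wedge H_3\wedge\tG{4}$ terms. The only slip is cosmetic: $e^{-2\phi}*H_3$ is a $7$-form in ten dimensions, not an $8$-form, which does not affect any sign since $\delta B_2$ has even degree.
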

\begin{proof}
We have under variations of $B_2$
\begin{align*}
e^{-2\phi}\delta(H_3\wedge *H_3)&=2 \delta dB_2\wedge e^{-2\phi}*H_3\\
&=2d\left(\delta B_2\wedge e^{-2\phi}*H_3\right)-2\delta B_2\wedge d\left(e^{-2\phi}*H_3\right).
\end{align*}
Furthermore,
\begin{align*}
\delta\left(\tG{4}\wedge *\tG{4}\right)&=2\delta\tG{4}\wedge *\tG{4}\\
&=-2C_1\wedge d\delta B_2\wedge*\tG{4}\\
&=2d\left(C_1\wedge \delta B_2\wedge *\tG{4}\right)-2G_2\wedge \delta B_2\wedge *\tG{4}+2C_1\wedge\delta B_2\wedge d*\tG{4}\\
&=d(\ldots)+2\delta B_2\wedge\left(-G_2\wedge *\tG{4}+C_1\wedge d*\tG{4}\right).
\end{align*}
Finally,
\begin{equation*}
\delta(B_2\wedge G_4\wedge G_4)=\delta B_2\wedge G_4\wedge G_4.
\end{equation*}
We get
\begin{equation*}
0=-2d\left(e^{-2\phi}*H_3\right)+2\left(-G_2\wedge *\tG{4}+C_1\wedge d*\tG{4}\right)+G_4\wedge G_4.
\end{equation*}
We have
\begin{equation*}
G_4\wedge G_4= \tG{4}\wedge\tG{4}+2 C_1\wedge H_3\wedge \tG{4}
\end{equation*}
and with the Maxwell equation for $\tG{4}$
\begin{equation*}
2C_1\wedge d*\tG{4}=-2C_1\wedge H_3\wedge \tG{4}.
\end{equation*}
Hence
\begin{equation*}
0=-2d\left(e^{-2\phi}*H_3\right)-2 G_2\wedge*\tG{4}+\tG{4}\wedge\tG{4}.
\end{equation*}
This implies the claim.
\end{proof}

Finally, concerning the Bianchi equations, it is easy to see that the last equation is equivalent to $G_4$ being closed.

\subsection{Supersymmetry in the string frame}

In addition to the bosonic fields the type IIA supergravity contains two Majorana-Weyl gravitinos of opposite chirality and two Majorana-Weyl dilatinos of opposite chirality. We combine them into one Majorana gravitino and one Majorana dilatino. We consider a Majorana spinor $\varep$. The following Killing spinor equations are derived in Section \ref{chapt:dim reduct}.
\begin{thm} A bosonic solution to the IIA field equations is supersymmetric if and only if there exists a non-zero Majorana spinor $\varep$ solving the following two equations:
\begin{enumerate}
\item {\bf Gravitino Killing spinor equation:}
\begin{greybox}
\begin{equation*}
0=\nabla_X\varep-\frac{1}{4}(i_XH_3)\cdot \Gamma_{11}\varep+\frac{1}{8}e^\phi\left(\tG{4}\cdot(X\cdot\varep)+G_2\cdot(X\cdot\Gamma_{11}\varep)\right).
\end{equation*}
\end{greybox}
\item {\bf Dilatino Killing spinor equation:}
\begin{greybox}
\begin{equation*}
0=\left(\frac{1}{4}e^{\phi}G_2-\frac{1}{3}d\phi\Gamma_{11}+\frac{1}{12}e^{\phi}\tG{4}\Gamma_{11}+\frac{1}{6}H_3\right)\cdot\varep.
\end{equation*}
\end{greybox}
\end{enumerate}
\end{thm}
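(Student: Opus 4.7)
\medskip

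\noindent\textbf{Proof proposal.} The plan is to derive both equations by dimensional reduction from the eleven-dimensional gravitino Killing spinor equation of the previous section, as foreshadowed by the reference to the dimensional-reduction chapter. Concretely, I would work on $M = N \times S^1$ with the Kaluza--Klein ansatz
\begin{equation*}
  g_{11} \;=\; e^{-2\phi/3}\, g_{10} \;+\; e^{4\phi/3}\bigl(d\theta + C_1\bigr)^{\!2},
\end{equation*}
and the form ansatz $C_{11} = C_3 + B_2 \wedge (d\theta + C_1)$, so that the 11d field strength $G = dC_{11}$ splits as $G = \tG{4} + H_3 \wedge (d\theta + C_1)$ with $\tG{4} = G_4 - C_1 \wedge H_3$ as in the IIA definition. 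A Majorana spinor on $N$ is lifted to an $S^1$-invariant Majorana spinor $\varep$ on $M$, and the chirality operator $\Gamma_{11}$ of ten dimensions is naturally identified with Clifford multiplication by the (unit) vertical vielbein vector in eleven dimensions.

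The first technical step is to compute the spin connection of $g_{11}$ in terms of the 10d Levi-Civita connection, $d\phi$ and $G_2 = dC_1$. Because $(M, g_{11}) \to (N, \tilde g)$ (with $\tilde g$ a conformal rescaling of $g_{10}$) is a Riemannian submersion with totally geodesic/minimal vertical fibre, I would use the O'Neill formulas mentioned in the introduction to express the 11d spin connection as a 10d spin connection correction plus Clifford terms involving $d\phi$ and $G_2$, then absorb the conformal factor by the standard spinor rescaling $\varep \to e^{\phi/6}\varep$ (or whatever power is forced by the vielbein scaling). This turns $\nabla^{11}_X \varep$, for $X$ horizontal, into $\nabla^{10}_X \varep$ plus explicit Clifford-multiplication pieces involving $d\phi$, $G_2$ and $\Gamma_{11}$.

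Next I would substitute into the 11d equation $\nabla^{11}_X \varep + \tfrac{1}{12}(X^\flat \wedge G - 2\, i_X G)\cdot \varep = 0$ and separate cases. For $X$ horizontal (tangent to $N$), using the decomposition of $G$ above and the identity $(\alpha \wedge F)\cdot \psi = \alpha\cdot(F\cdot\psi) - (i_{\alpha^{\sharp}}F)\cdot\psi$ from Section~\ref{sect:lin alg spinors}, the $\tG{4}$ and $H_3$ pieces organise themselves into exactly the combination appearing in the IIA gravitino equation, with the factors $\tfrac18 e^\phi$ and $\tfrac14$ arising from the conformal rescaling of the vielbein together with the $\tfrac1{12}$ prefactor. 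For $X = \partial_\theta$ (the vertical direction), the equation $\nabla^{11}_{\partial_\theta}\varep + \tfrac{1}{12}(\partial_\theta^\flat \wedge G - 2\, i_{\partial_\theta}G)\cdot \varep = 0$ becomes algebraic in $\varep$ (since $\varep$ is $\theta$-independent but the spin connection in the vertical direction is not zero) and, after identifying $i_{\partial_\theta}G = H_3$ and using $\partial_\theta^\flat \wedge G = e^{4\phi/3}(d\theta + C_1)\wedge \tG{4}$, reduces to the dilatino equation $\bigl(\tfrac14 e^\phi G_2 - \tfrac13 d\phi\,\Gamma_{11} + \tfrac1{12} e^\phi \tG{4}\Gamma_{11} + \tfrac16 H_3\bigr)\cdot \varep = 0$.

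The main obstacle will be bookkeeping rather than conceptual: getting all numerical coefficients right through the conformal rescaling of the metric, the induced rescaling of the spinor, and the repeated use of the commutation identity $X\cdot(F\cdot\varep) + (-1)^{k+1} F\cdot(X\cdot\varep) = 2\,(i_X F)\cdot\varep$ to rewrite expressions like $X^\flat \wedge G \cdot \varep$ in the form $X\cdot(G\cdot\varep) \pm (i_X G)\cdot\varep$ and then peel off the vertical $d\theta$-pieces. Verifying that the two equivalent forms of the 11d equation listed at the end of Section~\ref{sect:M-theory susy} reduce to the same 10d pair is a useful internal consistency check, and matching with the $e^\phi$-dependence predicted by the string-frame action provides an additional sanity check on the exponent in the conformal ansatz.
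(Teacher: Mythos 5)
Your strategy coincides with the paper's: the theorem is proved exactly by dimensional reduction in Section \ref{chapt:dim reduct}, reducing the eleven-dimensional gravitino Killing spinor equation on $M=N\times S^1$ with the Kaluza--Klein metric and $3$-form ansatz, the vertical component yielding the dilatino equation and the horizontal components, after a conformal rescaling of the spinor, the gravitino equation. However, three points in your sketch need repair. First, the horizontal component of the eleven-dimensional equation does \emph{not} by itself organise into the stated gravitino equation: after the rescaling $\tilde{\delta}=e^{-\phi/6}\delta$ one is still left with equation \eqref{eqn second version killing IIA}, which contains $-\tfrac16\,\Gamma_a\cdot(d\phi\cdot\tilde{\delta})$ and the combinations $3\,\tG{4}\cdot(\Gamma_a\cdot\tilde{\delta})-\Gamma_a\cdot(\tG{4}\cdot\tilde{\delta})$ and $3\,H_3\cdot(\Gamma_a\cdot\Gamma_{11}\tilde{\delta})+\Gamma_a\cdot(H_3\cdot\Gamma_{11}\tilde{\delta})$; the paper must then add $\tfrac12 e^{-\phi/6}\,\Gamma_a\cdot\bigl(\Gamma_{11}\cdot(\text{dilatino equation})\bigr)$ to collapse this to $\nabla_X\varep-\tfrac14(i_XH_3)\cdot\Gamma_{11}\varep+\tfrac18 e^{\phi}\bigl(\tG{4}\cdot(X\cdot\varep)+G_2\cdot(X\cdot\Gamma_{11}\varep)\bigr)$. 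Using the vertical (dilatino) equation inside the horizontal one is a genuine step of the argument (legitimate because the claim concerns the pair of equations as a system), and your plan, which asserts the terms "organise themselves", omits it.

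Second, the fibres of the submersion $(M,g_M)\to(N,e^{-2\phi/3}g_N)$ are \emph{not} totally geodesic: $\nabla^M_{\bar{S}}\bar{S}=-\tfrac23 e^{2\phi}\,\overline{(d\phi)^\sharp}^{\mH}$, and it is precisely this second-fundamental-form contribution that produces the $-\tfrac13 d\phi\,\Gamma_{11}$ term of the dilatino equation; your parenthetical "totally geodesic/minimal fibre" assumption would lose it (the paper avoids O'Neill's $T$-tensor discussion here and simply computes the full connection via the Koszul formula, Proposition \ref{prop:Koszul M conn} and Corollary \ref{cor:conn 1-forms M}). Third, the ansatz is off in two details that change the field identification: with $C_{11}=C_3+B_2\wedge(d\theta+C_1)$ the basic part of $G$ is $G_4+B_2\wedge G_2$, not $\tG{4}=G_4-C_1\wedge H_3$; the paper instead takes $C=\pi_N^*C_3+\pi_N^*B_2\wedge\alpha^{10}$, for which $G=\pi_N^*\tG{4}+\pi_N^*H_3\wedge(\alpha^{10}-\pi_N^*C_1)$. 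Moreover the off-diagonal metric term must carry $-C_1$ (vertical coframe $\alpha^{10}-C_1$); with your $+C_1$ convention the sign of $G_2$ flips in both Killing spinor equations (Remark \ref{rem: sign BBS}), so the theorem would not be reproduced verbatim. With these corrections your outline is the paper's proof.
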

The dilatino Killing spinor equation is purely algebraic and does not involve derivatives of the spinor. 
\begin{rem}
Both equations differ slightly from the equations stated in \cite{BBS} (see the comments at the end of Section \ref{chapt:dim reduct}). Our equations are equivalent to equations (2.11) and (2.12) in \cite{BRKOR} and to the equations in \cite[Section 2.1]{GPS} with $G^{(0)}$ and $\tilde{S}$ set to zero, respectively, as well as $\Gamma_{11}$ replaced by $-\Gamma_{11}$ and $C_1$ replaced by $-C_1$ (the choice of sign for $\Gamma_{11}$ is apparent in equation (A.6) in \cite{BKORV} and for $C_1$ in equation (2.22) in \cite{BRKOR}).
\end{rem}

\subsection{Einstein frame}\label{sect:IIA Einstein frame}

So far we considered the metric $g$ in the string frame. It is sometimes convenient to use the metric $g_E$ in Einstein frame, so that the Einstein-Hilbert part of the action has the standard form. We set
\begin{equation*}
g_E=e^{-\phi/2}g.
\end{equation*}
Then the scalar curvature of $g_E$ is given by
\begin{align*}
R_E&=e^{\phi/2}\left(R+\frac{1}{2}(n-1)\Delta\phi-\frac{1}{16}(n-1)(n-2)|d\phi|^2\right)\\
&=e^{\phi/2}\left(R+\frac{9}{2}\Delta\phi-\frac{9}{2}|d\phi|^2\right),
\end{align*}
where $\Delta\phi$ and the $|d\phi|^2$ are defined by the string frame metric $g$ and we use our sign convention for the Laplacian (see, for instance, \cite[Appendix D]{Wald} for the formula in the Lorentz case or \cite{YanoObata} for the formula in the Riemannian case, which also works on Lorentz manifolds).

In addition
\begin{equation*}
\mathrm{dvol}_{g_E}=e^{-5\phi/2}\mathrm{dvol}_g.
\end{equation*}
The norm of a $k$-form $F$ is then given by
\begin{equation*}
|F|^2_E=e^{k\phi/2}|F|^2,
\end{equation*}
hence
\begin{equation*}
\mathrm{dvol}_g|F|^2=e^{(5-k)\phi/2}\mathrm{dvol}_{g_E}|F|^2_E.
\end{equation*}
We also get for the Hodge star on $k$-forms $F$
\begin{equation*}
*F=e^{(5-k)\phi/2}*_EF.
\end{equation*}
\begin{lem}
We have
\begin{equation*}
\mathrm{dvol}_ge^{-2\phi}(R+4|d\phi|^2)=\mathrm{dvol}_{g_E}\left(R_E-\frac{1}{2}|d\phi|^2_E\right)+\mathrm{td}.
\end{equation*}
\end{lem}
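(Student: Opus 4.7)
The plan is to simply substitute the conformal transformation formulas for $R_E$, $\mathrm{dvol}_{g_E}$, and $|d\phi|_E^2$ listed just before the lemma into the right-hand side, then match it against the left-hand side modulo a total differential.

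First I would expand the right-hand side. Using $\mathrm{dvol}_{g_E}=e^{-5\phi/2}\mathrm{dvol}_g$, $R_E=e^{\phi/2}(R+\tfrac{9}{2}\Delta\phi-\tfrac{9}{2}|d\phi|^2)$, and $|d\phi|_E^2=e^{\phi/2}|d\phi|^2$ (since $d\phi$ is a $1$-form), the prefactors combine to $e^{-5\phi/2}\cdot e^{\phi/2}=e^{-2\phi}$, giving
\begin{equation*}
\mathrm{dvol}_{g_E}\left(R_E-\tfrac{1}{2}|d\phi|_E^2\right)=\mathrm{dvol}_g\, e^{-2\phi}\left(R+\tfrac{9}{2}\Delta\phi-5|d\phi|^2\right).
\end{equation*}
Subtracting this from the left-hand side $\mathrm{dvol}_g\, e^{-2\phi}(R+4|d\phi|^2)$ leaves
\begin{equation*}
\mathrm{dvol}_g\, e^{-2\phi}\left(9|d\phi|^2-\tfrac{9}{2}\Delta\phi\right)=-\tfrac{9}{2}\mathrm{dvol}_g\, e^{-2\phi}\left(\Delta\phi-2|d\phi|^2\right).
\end{equation*}

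The remaining step is to show that this expression is a total differential. Using the definition $(\Delta\phi)\mathrm{dvol}_g=d*d\phi$ from the excerpt and the Leibniz rule,
\begin{equation*}
d\!\left(e^{-2\phi}*d\phi\right)=-2e^{-2\phi}d\phi\wedge *d\phi+e^{-2\phi}d*d\phi=e^{-2\phi}\left(\Delta\phi-2|d\phi|^2\right)\mathrm{dvol}_g,
\end{equation*}
so the difference is exactly $-\tfrac{9}{2}d(e^{-2\phi}*d\phi)$, which proves the lemma. There is no real obstacle here — the only thing to be careful about is tracking the exponential factors correctly and remembering that the Laplacian and norm on the right-hand side of the defining relation for $R_E$ are taken with respect to $g$ (not $g_E$), as stated in the paragraph preceding the lemma.
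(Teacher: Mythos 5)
Your proposal is correct and follows essentially the same route as the paper: substitute $\mathrm{dvol}_{g_E}=e^{-5\phi/2}\mathrm{dvol}_g$, $R_E=e^{\phi/2}(R+\tfrac{9}{2}\Delta\phi-\tfrac{9}{2}|d\phi|^2)$ and $|d\phi|_E^2=e^{\phi/2}|d\phi|^2$ to get $\mathrm{dvol}_g e^{-2\phi}(R+\tfrac{9}{2}\Delta\phi-5|d\phi|^2)$, then trade the $\Delta\phi$ term for $9e^{-2\phi}|d\phi|^2\mathrm{dvol}_g$ plus the exact form $d\bigl(\tfrac{9}{2}e^{-2\phi}*d\phi\bigr)$, exactly as the paper does (the paper's intermediate line has a sign typo, $+\tfrac{9}{2}|d\phi|^2$ instead of $-\tfrac{9}{2}|d\phi|^2$, which your version correctly avoids).
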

\begin{proof}
We have
\begin{align*}
\mathrm{dvol}_{g_E}\left(R_E-\frac{1}{2}|d\phi|^2_E\right)&=\mathrm{dvol}_ge^{-2\phi}\left(R+\frac{9}{2}\Delta\phi+\frac{9}{2}|d\phi|^2-\frac{1}{2}|d\phi|^2 \right)\\
&=\mathrm{dvol}_ge^{-2\phi}\left(R+\frac{9}{2}\Delta\phi-5|d\phi|^2 \right).
\end{align*}
But
\begin{align*}
\frac{9}{2}\mathrm{dvol}_ge^{-2\phi}\Delta\phi&=\frac{9}{2}e^{-2\phi}d*d\phi\\
&=d\left(\frac{9}{2}e^{-2\phi}*d\phi\right)+9e^{-2\phi}d\phi\wedge*d\phi\\
&=d(\ldots)+9e^{-2\phi}|d\phi|^2\mathrm{dvol}_g.
\end{align*}
This implies the claim.
\end{proof}
\begin{thm}
In the Einstein frame the bosonic part of the type IIA action is given by 
\begin{greybox}
\begin{equation*}
S=S^E_{NS}+S^E_{RIIA}+S_{CSIIA},
\end{equation*}
where
\begin{align*}
S^E_{NS}&=\frac{1}{2\kappa_{10}^2}\int_M\mathrm{dvol}_{g_E}\left(R_E-\frac{1}{2}|d\phi|^2_E-\frac{1}{2}e^{-\phi}[H_3|_E^2\right)\\
S^E_{RIIA}&=-\frac{1}{4\kappa_{10}^2}\int_M\mathrm{dvol}_{g_E}\left(e^{3\phi/2}|G_2|_E^2+e^{\phi/2}|\tG{4}|_E^2\right)
\end{align*}
and $S_{CSIIA}$ is the same as before.
\end{greybox}
\end{thm}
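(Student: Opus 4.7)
The plan is to rewrite each summand of the string-frame action separately, using the conformal scaling identities established immediately before the theorem, and then to recombine. Recall that the Chern-Simons piece $S_{CSIIA}=-\frac{1}{4\kappa_{10}^2}\int_M B_2\wedge G_4\wedge G_4$ is purely topological: it is defined without reference to the metric, so it is unchanged under the passage from $g$ to $g_E=e^{-\phi/2}g$. Thus I only need to transform $S_{NS}$ and $S_{RIIA}$.

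For $S_{NS}$, I would split it into the ``$R+4|d\phi|^2$'' part and the $H_3$ part. The first part is handled by the preceding lemma, which asserts (modulo a total differential)
\[
\mathrm{dvol}_g\,e^{-2\phi}\bigl(R+4|d\phi|^2\bigr)=\mathrm{dvol}_{g_E}\Bigl(R_E-\tfrac{1}{2}|d\phi|_E^2\Bigr)+\mathrm{td},
\]
which produces exactly the first two terms of $S^E_{NS}$. For the $H_3$ piece, I apply the general identity $\mathrm{dvol}_g|F|^2=e^{(5-k)\phi/2}\mathrm{dvol}_{g_E}|F|_E^2$ with $k=3$, giving $\mathrm{dvol}_g|H_3|^2=e^{\phi}\mathrm{dvol}_{g_E}|H_3|_E^2$ and hence
\[
-\tfrac{1}{2}\mathrm{dvol}_g\,e^{-2\phi}|H_3|^2=-\tfrac{1}{2}e^{-\phi}\mathrm{dvol}_{g_E}|H_3|_E^2,
\]
matching the remaining term of $S^E_{NS}$.

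For $S_{RIIA}$, I apply the same scaling identity with $k=2$ to $G_2$ and $k=4$ to $\tG{4}$, yielding $\mathrm{dvol}_g|G_2|^2=e^{3\phi/2}\mathrm{dvol}_{g_E}|G_2|_E^2$ and $\mathrm{dvol}_g|\tG{4}|^2=e^{\phi/2}\mathrm{dvol}_{g_E}|\tG{4}|_E^2$. Substituting these into $S_{RIIA}=-\frac{1}{4\kappa_{10}^2}\int_M\mathrm{dvol}_g(|G_2|^2+|\tG{4}|^2)$ gives exactly the stated $S^E_{RIIA}$. Assembling the three pieces completes the proof.

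There is no real obstacle here: the argument is purely a bookkeeping exercise in the previously established conformal rescaling formulas, together with the fact that the single non-trivial analytic input (the transformation of $R$ and the absorption of $\tfrac{9}{2}\Delta\phi$ into a total differential) has already been packaged in the preceding lemma. The only mild subtlety to watch is the sign/coefficient in front of $|d\phi|_E^2$ in $S^E_{NS}$, which arises from cancelling $\tfrac{9}{2}|d\phi|^2$ from the conformal change of scalar curvature against the $9|d\phi|^2$ produced by integrating $\tfrac{9}{2}e^{-2\phi}\Delta\phi$ by parts, leaving exactly $-\tfrac{1}{2}|d\phi|^2$ as stated.
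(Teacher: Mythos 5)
Your proposal is correct and follows exactly the route the paper intends: the theorem is stated right after the lemma $\mathrm{dvol}_ge^{-2\phi}(R+4|d\phi|^2)=\mathrm{dvol}_{g_E}\bigl(R_E-\tfrac{1}{2}|d\phi|^2_E\bigr)+\mathrm{td}$ and the rescaling identity $\mathrm{dvol}_g|F|^2=e^{(5-k)\phi/2}\mathrm{dvol}_{g_E}|F|^2_E$, and its proof is precisely the term-by-term substitution you carry out (with $k=3,2,4$) together with the observation that the Chern-Simons term is metric-independent. All coefficients in your computation check out.
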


The equations of motion change accordingly.

\begin{thm} In the Einstein frame the bosonic equations of motion of type IIA supergravity are given by:
\begin{enumerate}
\item {\bf Einstein equation:}
\begin{greybox}
\begin{align*}
\mathrm{Ric}_{E}(X,Y)-\frac{1}{2}g_{E}(X,Y)R_E&=\frac{1}{2}\left(d\phi(X)d\phi(Y)-\frac{1}{2}g_E(X,Y)|d\phi|_E^2\right)\\
&\quad+\frac{1}{2}e^{-\phi}\left(\langle i_{X}H_3,i_{Y}H_3\rangle_E-\frac{1}{2}g_E(X,Y)|H_3|_E^2\right)\\
&\quad + \frac{1}{2}e^{3\phi/2}\left(\langle i_XG_2,i_YG_2\rangle_E-\frac{1}{2}g_E(X,Y)|G_2|_E^2\right)\\
&\quad +\frac{1}{2}e^{\phi/2}\left(\langle i_{X}\tG{4},i_{Y}\tG{4}\rangle_E-\frac{1}{2}g_E(X,Y)|\tG{4}|_E^2\right)
\end{align*}
\end{greybox}
or, equivalently, the {\bf Einstein equation in Ricci form:}
\begin{greybox}
\begin{align*}
\mathrm{Ric}_{E}(X,Y)&=\frac{1}{2}d\phi(X)d\phi(Y)\\
&\quad+\frac{1}{2}e^{-\phi}\left(\langle i_{X}H_3,i_{Y}H_3\rangle_E-\frac{1}{4}g_E(X,Y)|H_3|^2_E\right)\\
&\quad + \frac{1}{2}e^{3\phi/2}\left(\langle i_XG_2,i_YG_2\rangle_E-\frac{1}{8}g_E(X,Y)|G_2|_E^2\right)\\
&\quad +\frac{1}{2}e^{\phi/2}\left(\langle i_{X}\tG{4},i_{Y}\tG{4}\rangle_E-\frac{3}{8}g_E(X,Y)|\tG{4}|_E^2\right).
\end{align*}
\end{greybox}
\item {\bf Dilaton equation:}
\begin{greybox}
\begin{equation*}
\Delta_E\phi=-\frac{1}{2}e^{-\phi}|H_3|_E^2+\frac{3}{4}e^{3\phi/2}|G_2|_E^2+\frac{1}{4}e^{\phi/2}|\tG{4}|^2_E
\end{equation*}
\end{greybox}
\item {\bf Maxwell equations:}
\begin{greybox}
\begin{align*}
d\left(e^{3\phi/2}*_EG_2\right)&=e^{\phi/2}H_3\wedge *_E\tG{4}\\
d\left(e^{\phi/2}*_E\tG{4}\right)&=-H_3\wedge \tG{4}\\
d\left(e^{-\phi}*_EH_3\right)&=\frac{1}{2}\tG{4}\wedge\tG{4}-e^{\phi/2}G_2\wedge*_E\tG{4}.
\end{align*}
\end{greybox}
\item The {\bf Bianchi identities} stay the same:
\begin{greybox}
\begin{align*}
dH_3&=0\\
dG_2&=0\\
d\tG{4}&=H_3\wedge G_2.
\end{align*}
\end{greybox}
\end{enumerate}
\end{thm}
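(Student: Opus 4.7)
The plan is to derive each equation directly from the Einstein frame action just stated, mirroring the string frame proofs but with the simplification that $R_E$ now carries a constant (and in particular $\phi$-independent) prefactor. Since $S_{CSIIA}$ depends neither on $g_E$ nor on $\phi$, it contributes nontrivially only under variations of $B_2$, $C_1$, $C_3$, and in those cases it contributes exactly as in the string frame calculations already carried out.

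For the Einstein equation I would vary $g_E$ and apply Lemma \ref{lem:var g} to $R_E\,\mathrm{dvol}_{g_E}$, together with Lemma \ref{lem:variation of form} applied to each of $|d\phi|_E^2\mathrm{dvol}_{g_E}$, $e^{-\phi}|H_3|_E^2\mathrm{dvol}_{g_E}$, $e^{3\phi/2}|G_2|_E^2\mathrm{dvol}_{g_E}$ and $e^{\phi/2}|\tG{4}|_E^2\mathrm{dvol}_{g_E}$, treating the exponential prefactors as $g_E$-independent scalars that pass through the variation. Combining the pieces yields the Einstein equation; taking the $g_E$-trace then solves for $R_E$ and substitution back produces the Ricci form.

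For the dilaton equation I would vary $\phi$. The only $\phi$-dependent terms are the kinetic term $-\tfrac{1}{2}|d\phi|_E^2$, which after writing $|d\phi|_E^2\mathrm{dvol}_{g_E}=d\phi\wedge *_Ed\phi$ and integrating by parts yields a $\Delta_E\phi$ contribution, and the three exponential prefactors $e^{-\phi},\,e^{3\phi/2},\,e^{\phi/2}$, whose variations produce the right-hand side with the stated coefficients. Crucially, unlike in the string frame, $\phi$ no longer couples to $R_E$, so no scalar curvature substitution is required and the dilaton equation comes out in essentially one line.

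For the Maxwell equations the cleanest route is to start from the string frame Maxwell equations already proved and apply the identity $*F=e^{(5-k)\phi/2}*_EF$ for a $k$-form $F$, with $k=2$ for $G_2$, $k=4$ for $\tG{4}$ and $k=3$ for $H_3$; these substitutions convert the three string-frame equations directly into the three stated Einstein frame equations. (Alternatively one can redo the variations of $C_1$, $C_3$, $B_2$ in the Einstein frame action, carrying the exponential prefactors through Lemma \ref{lem:var k-form form C G=dC} so that they appear inside the outer $d$.) The Bianchi identities are unchanged because they simply encode the local existence of the potentials $B_2$, $C_1$, $C_3$, which is a frame-independent statement. The main hazard is purely bookkeeping: the exponential weights combine differently for different form degrees and for $*$ versus $|\cdot|^2$, so I would carry the factors $e^{(5-k)\phi/2}$ and $e^{k\phi/2}$ explicitly at every step rather than try to remember them.
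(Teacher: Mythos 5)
Your proposal is correct and follows essentially the same route as the paper: the paper likewise treats $g_E$, $\phi$ and the potentials as the independent fields of the Einstein frame action, proves the dilaton equation exactly as you do (only the kinetic term and the exponential prefactors depend on $\phi$, so no scalar curvature substitution is needed), and notes that the Einstein and Maxwell equations ``follow as before'' by the same variational lemmas, with the trace in ten dimensions giving $R_E$ and hence the Ricci form. Your shortcut of converting the string frame Maxwell equations via $*F=e^{(5-k)\phi/2}*_EF$ is a harmless variant of this and gives the stated equations with the correct exponential weights.
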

\begin{rem}
The Einstein frame field equations, where all differential forms except one are set to zero, can be found, for example, in \cite[page 690]{BLT}.
\end{rem}
We only prove the dilaton equation, the remaining equations follow as before.
\begin{proof}
We have under variations of $\phi$
\begin{align*}
\delta|d\phi|^2_E\mathrm{dvol}_{g_E}&=2d\delta\phi\wedge*_Ed\phi\\
&=d(\ldots)-2\delta\phi (d*_Ed\phi)\\
&=d(\ldots)-2\delta\phi(\Delta_E\phi)\mathrm{dvol}_{g_E}.
\end{align*}
Also
\begin{equation*}
\delta e^{\lambda\phi}=\lambda e^{\lambda\phi}\delta\phi.
\end{equation*}
This implies
\begin{equation*}
0=\Delta_E\phi+\frac{1}{2}e^{-\phi}|H_3|_E^2-\frac{3}{4}e^{3\phi/2}|G_2|_E^2-\frac{1}{4}e^{\phi/2}|\tG{4}|^2_E
\end{equation*}
and hence the dilaton equation.
\end{proof}
\begin{cor}
The scalar curvature of a bosonic solution to the IIA Einstein equation in the Einstein frame is given by
\begin{greybox}
\begin{equation*}
R_E=\frac{1}{2}|d\phi|_E^2+\frac{1}{4}e^{-\phi}|H_3|_E^2+\frac{3}{8}e^{3\phi/2}|G_2|_E^2+\frac{1}{8}e^{\phi/2}|\tG{4}|_E^2.
\end{equation*}
\end{greybox}
\end{cor}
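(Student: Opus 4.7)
The plan is to mimic the computation that gave the analogous string-frame corollary: take the $g_E$-trace of the Einstein equation and solve for $R_E$. Since the Einstein equation in Ricci form is already stated (and, modulo the dilaton equation, equivalent to the full Einstein equation), tracing it is the most direct route.

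First I would contract the Einstein equation in Ricci form with $(g_E)^{\mu\nu}$. The left-hand side becomes simply $R_E$. On the right-hand side, each coefficient of $g_E(X,Y)$ picks up a factor of $10 = \dim M$ upon contraction, while each inner-product term $\langle i_X F, i_Y F\rangle_E$ contracts to $k|F|_E^2$ by the lemma $g^{\mu\nu}\langle i_{\partial_\mu}G, i_{\partial_\nu}G\rangle = k|G|^2$ (which is a purely algebraic identity valid for any non-degenerate metric, so it applies equally to $g_E$ and its associated norm).

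Applied term by term this gives the contributions
\begin{align*}
\tfrac{1}{2}\,g_E^{\mu\nu}\,d\phi(\partial_\mu)d\phi(\partial_\nu) &= \tfrac{1}{2}|d\phi|_E^2,\\
\tfrac{1}{2}e^{-\phi}\bigl(3-\tfrac{10}{4}\bigr)|H_3|_E^2 &= \tfrac{1}{4}e^{-\phi}|H_3|_E^2,\\
\tfrac{1}{2}e^{3\phi/2}\bigl(2-\tfrac{10}{8}\bigr)|G_2|_E^2 &= \tfrac{3}{8}e^{3\phi/2}|G_2|_E^2,\\
\tfrac{1}{2}e^{\phi/2}\bigl(4-\tfrac{30}{8}\bigr)|\tG{4}|_E^2 &= \tfrac{1}{8}e^{\phi/2}|\tG{4}|_E^2,
\end{align*}
whose sum is exactly the claimed expression for $R_E$. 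As a consistency check I would also trace the original (non-Ricci) form of the Einstein equation: the left-hand side becomes $R_E - 5R_E = -4R_E$, and the same bookkeeping on the right gives $-2|d\phi|_E^2 - e^{-\phi}|H_3|_E^2 - \tfrac{3}{2}e^{3\phi/2}|G_2|_E^2 - \tfrac{1}{2}e^{\phi/2}|\tG{4}|_E^2$; dividing by $-4$ reproduces the same formula.

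There is no real obstacle here — the computation is purely a matter of tracking the dimensional factor $10$ and the form degrees $1,3,2,4$. The only point requiring a brief remark is that the identity $g^{\mu\nu}\langle i_{\partial_\mu}F,i_{\partial_\nu}F\rangle = k|F|^2$ transfers without change to the Einstein-frame metric, since the lemma was stated in full generality.
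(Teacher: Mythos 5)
Your proposal is correct and follows exactly the route the paper uses for its analogous scalar-curvature corollaries: trace the Einstein equation (in either form) over the ten-dimensional Einstein-frame metric, using $g^{\mu\nu}\langle i_{\partial_\mu}F,i_{\partial_\nu}F\rangle=k|F|^2$, and all coefficients check out. One parenthetical nit: the Ricci form is equivalent to the full Einstein equation by trace reversal alone, with no appeal to the dilaton equation needed.
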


\section{Supergravity type IIB}

\subsection{String frame}
\begin{defn}
The bosonic part of the action in the string frame for the ten-dimensional supergravity theory of type IIB on a spacetime $M^{10}$ is given by
\begin{greybox}
\begin{equation*}
S=S_{NS}+S_{RIIB}+S_{CSIIB}
\end{equation*}
with
\begin{align*}
S_{NS}&=\frac{1}{2\kappa_{10}^2}\int_M\mathrm{dvol}_ge^{-2\phi}\left(R+4|d\phi|^2-\frac{1}{2}|H_3|^2\right)\\
S_{RIIB}&=-\frac{1}{4\kappa_{10}^2}\int_M\mathrm{dvol}_g\left(|G_1|^2+|\tG{3}|^2+\frac{1}{2}|\tG{5}|^2\right)\\
S_{CSIIB}&=-\frac{1}{4\kappa_{10}^2}\int_M C_4\wedge H_3\wedge G_3.
\end{align*}
\end{greybox}
Here
\begin{greybox}
\begin{enumerate}
\item $g$ is a Lorentz metric on $M$
\item $\phi$ is a real scalar field (the dilaton)
\item $B_2$ is a $2$-form (the Neveu-Schwarz $B$-field)
\item $C_0$ is a scalar field (the Ramond-Ramond axion)
\item $C_2$ and $C_4$ are a $2$-form and a $4$-form (Ramond-Ramond potentials).
\end{enumerate}
\end{greybox}
We also define the field strengths
\begin{greybox}
\begin{align*}
H_3&=dB_2\\
G_1&=dC_0\\
G_3&=dC_2\\
G_5&=dC_4
\end{align*}
and
\begin{align*}
\tG{3}&=G_3-C_0H_3\\
\tG{5}&=G_5-\frac{1}{2}H_3\wedge C_2+\frac{1}{2}G_3\wedge B_2.
\end{align*}
\end{greybox}
In addition to the action one demands the self-duality equation
\begin{equation*}
*\tG{5}=\tG{5}.
\end{equation*}
\end{defn}
The self-duality condition implies $|\tG{5}|^2=0$, since
\begin{equation*}
|\tG{5}|^2\mathrm{dvol}_g=\tG{5}\wedge \tG{5}=0,
\end{equation*}
because the form is of odd degree. We assume from now on that the self-duality of $\tG{5}$ is added to the equations of motion.

The field strengths $H_3,G_1,\tG{3},\tG{5}$ and the action are invariant under the generalized gauge transformations \cite{BLT}
\begin{align*}
B_2'&=B_2+d\zeta_1\\
C_0'&=C_0\\
C_2'&=C_2+d\Lambda_1\\
C_4'&=C_4+d\Lambda_3-\frac{1}{2}H_3\wedge\Lambda_1+\frac{1}{2}G_3\wedge\zeta_1
\end{align*}
for differential forms $\zeta_1,\Lambda_1,\Lambda_3$.
\begin{thm} In the string frame the bosonic equations of motion of type IIB supergravity are given by:
\begin{enumerate}
\item {\bf Einstein equation:}
\begin{greybox}
\begin{align*}
\mathrm{Ric}(X,Y)-\frac{1}{2}g(X,Y)R&=-2H^\phi(X,Y)+2\left(\Delta\phi-|d\phi|^2\right)g(X,Y)\\
&\quad+\frac{1}{2}\left(\langle i_{X}H_3,i_{Y}H_3\rangle-\frac{1}{2}g(X,Y)|H_3|^2\right)\\
&\quad + \frac{1}{2}e^{2\phi}\left(G_1(X)G_1(Y)-\frac{1}{2}g(X,Y)|G_1|^2\right)\\
&\quad +\frac{1}{2}e^{2\phi}\left(\langle i_{X}\tG{3},i_{Y}\tG{3}\rangle-\frac{1}{2}g(X,Y)|\tG{3}|^2\right)\\
&\quad +\frac{1}{4}e^{2\phi}\langle i_{X}\tG{5},i_{Y}\tG{5}\rangle
\end{align*}
\end{greybox}
or, equivalently, the {\bf Einstein equation in Ricci form:}
\begin{greybox}
\begin{align*}
\mathrm{Ric}(X,Y)&=-2H^\phi(X,Y)-\frac{1}{4}\left(\Delta\phi-2|d\phi|^2\right)g(X,Y)\\
&\quad +\frac{1}{2}\left(\langle i_XH_3,i_YH_3\rangle-\frac{1}{4}g(X,Y)|H_3|^2\right)\\
&\quad + \frac{1}{2}e^{2\phi}G_1(X)G_1(Y)\\
&\quad +\frac{1}{2}e^{2\phi}\left(\langle i_X\tG{3},i_Y\tG{3}\rangle-\frac{1}{4}g(X,Y)|\tG{3}|^2\right)\\
&\quad +\frac{1}{4}e^{2\phi}\langle i_X\tG{5},i_Y\tG{5}\rangle.
\end{align*}
\end{greybox}
\item {\bf Dilaton equation:}
\begin{greybox}
\begin{equation*}
\Delta\phi=2|d\phi|^2-\frac{1}{2}|H_3|^2+e^{2\phi}|G_1|^2+\frac{1}{2}e^{2\phi}|\tG{3}|^2.
\end{equation*}
\end{greybox}
\item Using the dilaton equation we can write the Einstein equation in Ricci form as
\begin{greybox}
\begin{align*}
\mathrm{Ric}(X,Y)&=-2H^\phi(X,Y)+\frac{1}{2}\langle i_XH_3,i_YH_3\rangle\\
&\quad + \frac{1}{2}e^{2\phi}\left(G_1(X)G_1(Y)-\frac{1}{2}g(X,Y)|G_1|^2\right)\\
&\quad +\frac{1}{2}e^{2\phi}\left(\langle i_X\tG{3},i_Y\tG{3}\rangle-\frac{1}{2}g(X,Y)|\tG{3}|^2\right)\\
&\quad +\frac{1}{4}e^{2\phi}\langle i_X\tG{5},i_Y\tG{5}\rangle.
\end{align*}
\end{greybox}

\item {\bf Maxwell equations:}
\begin{greybox}
\begin{align*}
d*G_1&=-H_3\wedge *\tG{3}\\
d*\tG{5}&=H_3\wedge \tG{3}\\
d*\tG{3}&=-H_3\wedge \tG{5}\\
d\left(e^{-2\phi}*H_3\right)&=\tG{3}\wedge\tG{5}+G_1\wedge*\tG{3}.
\end{align*}
\end{greybox}
\item We add the {\bf self-duality equation:}
\begin{greybox}
\begin{equation*}
*\tG{5}=\tG{5}.
\end{equation*}
\end{greybox}
\item If we assume that the potentials $B_2,C_0,C_2,C_4$ exist only locally, we add the {\bf Bianchi identities:}
\begin{greybox}
\begin{align*}
dH_3&=0\\
dG_1&=0\\
d\tG{3}&=H_3\wedge G_1\\
d\tG{5}&=H_3\wedge\tG{3}.
\end{align*}
\end{greybox}
\end{enumerate}
\end{thm}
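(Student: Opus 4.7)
The plan is to derive each equation as an Euler--Lagrange equation of the action $S=S_{NS}+S_{RIIB}+S_{CSIIB}$, following exactly the strategy used for the IIA theory in Section~\ref{sect: sugra IIA}.

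\textbf{Einstein equation.} I would first vary the metric. The Chern--Simons term $C_4\wedge H_3\wedge G_3$ is purely topological and drops out. Lemma~\ref{lem:var Hilbert Einstein dilaton}, applied to $e^{-2\phi}R$, produces all of the $\mathrm{Ric}$, $H^\phi$, $\Delta\phi$ and $|d\phi|^2$ contributions, exactly as in the IIA computation; Lemma~\ref{lem:variation of form} produces the bilinear-contraction terms from $|H_3|^2$, $|G_1|^2$, $|\tG{3}|^2$ and $|\tG{5}|^2$. The factor $\tfrac12$ in front of $|\tG{5}|^2$ in $S_{RIIB}$ is what produces the coefficient $\tfrac14$ rather than $\tfrac12$ in front of the $\tG{5}$ term in the Einstein equation. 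Since the NS contributions carry the common factor $e^{-2\phi}$ while the RR ones do not, dividing through yields the $e^{2\phi}$ prefactors on the RR side; note also that $G_1$ is a one-form, so $\langle i_XG_1,i_YG_1\rangle=G_1(X)G_1(Y)$.

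\textbf{Ricci form and dilaton equation.} I would then take the metric trace of the Einstein equation to solve for the scalar curvature $R$. Only $S_{NS}$ depends on $\phi$, and varying $\phi$ and integrating the $d\phi\wedge *d\phi$ term by parts (verbatim as in the IIA proof) gives
\[
-2e^{-2\phi}\!\left(R+4|d\phi|^2-\tfrac12|H_3|^2\right)\mathrm{dvol}_g = 8\,d\bigl(e^{-2\phi}*d\phi\bigr).
\]
Using $d(e^{-2\phi}*d\phi)=e^{-2\phi}(\Delta\phi-2|d\phi|^2)\,\mathrm{dvol}_g$ and substituting the expression for $R$ obtained from the trace yields the stated dilaton equation; plugging the dilaton equation back into the Einstein equation in Ricci form produces the simplified version in item~(c).

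\textbf{Maxwell equations.} For each potential I would vary once, applying Lemma~\ref{lem:var k-form form C G=dC} to the $|dC|^2\mathrm{dvol}_g$ piece and handling the extra terms in $\tG{3}$, $\tG{5}$ and the CS term by direct integration by parts. Varying $C_0$ touches $G_1$ and $\tG{3}$ and yields $d*G_1=-H_3\wedge *\tG{3}$. Varying $C_4$ touches only $\tG{5}$ and the CS term and gives $d*\tG{5}=H_3\wedge G_3$, which equals $H_3\wedge\tG{3}$ since $H_3\wedge H_3=0$ for the odd-degree form $H_3$. Varying $C_2$ touches $\tG{3}$, $\tG{5}$ and the CS term; after integration by parts and use of the already-derived $C_4$-equation to eliminate $d*\tG{5}$, one recovers $d*\tG{3}=-H_3\wedge\tG{5}$. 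Finally, varying $B_2$ touches $H_3$, $\tG{5}$ and the CS term and gives $d(e^{-2\phi}*H_3)=\tG{3}\wedge\tG{5}+G_1\wedge*\tG{3}$. The Bianchi identities $dH_3=0$, $dG_1=0$, $d\tG{3}=H_3\wedge G_1$, $d\tG{5}=H_3\wedge\tG{3}$ follow at once from the definitions of the field strengths together with $d^2=0$.

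\textbf{Main obstacle.} The delicate point is the self-duality condition $*\tG{5}=\tG{5}$: since $|\tG{5}|^2\mathrm{dvol}_g=\tG{5}\wedge\tG{5}=0$ on the nose, the term $\tfrac12|\tG{5}|^2$ in $S_{RIIB}$ is a ``pseudo-action'' contribution, and I would vary as if $\tG{5}$ were an unconstrained five-form and then impose $*\tG{5}=\tG{5}$ by hand afterwards. The other bookkeeping hurdle is sign tracking in the Chern--Simons variation and in the $\tG{5}$ variation with respect to $C_2$ and $B_2$, where odd-degree forms must be commuted past one another repeatedly; the identity $\alpha\wedge\alpha=0$ for odd-degree $\alpha$ is what reconciles several right-hand sides (e.g.\ $H_3\wedge\tG{3}=H_3\wedge G_3$) and is essential for matching the stated equations.
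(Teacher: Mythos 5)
Your proposal is correct and follows essentially the same route as the paper: vary the action field by field using Lemmas \ref{lem:var Hilbert Einstein dilaton}, \ref{lem:variation of form} and \ref{lem:var k-form form C G=dC}, trace the Einstein equation to get $R$ and hence the dilaton equation, derive the Maxwell equations in the order $C_0,C_4,C_2,B_2$ reusing the earlier ones, treat $\tfrac12|\tG{5}|^2$ as a pseudo-action term with $*\tG{5}=\tG{5}$ imposed by hand, and read off the Bianchi identities from the definitions of the field strengths. The only point worth making explicit is that the self-duality condition is already needed inside the $C_2$ and $B_2$ variations (to convert $H_3\wedge *\tG{5}$ and $G_3\wedge *\tG{5}$ into $H_3\wedge\tG{5}$ and $G_3\wedge\tG{5}$) and to drop the $g(X,Y)|\tG{5}|^2$ trace term in the Einstein equation, which your ``impose it afterwards'' remark covers in spirit and the paper does explicitly.
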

\begin{rem}
Some of these equations differ from the corresponding equations in \cite{FigH}.
\end{rem}
We prove the Einstein equation:
\begin{proof}
The Einstein equation follows from the variation of $g$ as before in the case of IIA. We calculate the scalar curvature:
\begin{align*}
-4R&=R-5R=g^{\mu\nu}\left(R_{\mu\nu}-\frac{1}{2}R\right)\\
&=18\Delta\phi-20|d\phi|^2+\frac{1}{2}\left(3|H_3|^2-5|H_3|^2\right)\\
&\quad + \frac{1}{2}e^{2\phi}\left(|G_1|^2-5|G_1|^2\right)+\frac{1}{2}e^{2\phi}\left(3|\tG{3}|^2-5|\tG{3}|^2\right)\\
&=18\Delta\phi-20|d\phi|^2-|H_3|^2-2e^{2\phi}|G_1|^2-e^{2\phi}|\tG{3}|^2.
\end{align*}
This implies
\begin{equation*}
R=-\frac{9}{2}\Delta\phi+5|d\phi|^2+\frac{1}{4}|H_3|^2+\frac{1}{2}e^{2\phi}|G_1|^2+\frac{1}{4}e^{2\phi}|\tG{3}|^2
\end{equation*}
and hence the Einstein equation in Ricci form.
\end{proof}
We now prove the dilaton equation:
\begin{proof}
Under variations of $\phi$ we have as before in the case of IIA
\begin{equation*}
4d\left(e^{-2\phi}*d\phi\right)=-e^{-2\phi}\left(R+4|d\phi|^2-\frac{1}{2}|H_3|^2\right)\mathrm{dvol}_g.
\end{equation*}
Hence with the formula for the scalar curvature
\begin{align*}
d\left(e^{-2\phi}*d\phi\right)&=-\frac{1}{4}e^{-2\phi}\left(R+4|d\phi|^2-\frac{1}{2}|H_3|^2\right)\mathrm{dvol}_g\\
&=-\frac{1}{4}e^{-2\phi}\left(-\frac{9}{2}\Delta\phi+9|d\phi|^2-\frac{1}{4}|H_3|^2+\frac{1}{2}e^{2\phi}|G_1|^2+\frac{1}{4}e^{2\phi}|\tG{3}|^2\right)\mathrm{dvol}_g\\
&=\left(\frac{9}{8}e^{-2\phi}\Delta\phi - \frac{9}{4}e^{-2\phi}|d\phi|^2+\frac{1}{16}e^{-2\phi}|H_3|^2-\frac{1}{8}|G_1|^2-\frac{1}{16}|\tG{3}|^2\right)\mathrm{dvol}_g.
\end{align*}
Writing
\begin{equation*}
d\left(e^{-2\phi}*d\phi\right)=e^{-2\phi}\left(-2|d\phi|^2+\Delta\phi\right)\mathrm{dvol}_g
\end{equation*}
this implies the dilaton equation.
\end{proof}
\begin{cor}
The scalar curvature of a bosonic solution to the IIB Einstein equation in the string frame is given by
\begin{greybox}
\begin{equation*}
R=-4|d\phi|^2+\frac{5}{2}|H_3|^2-4e^{2\phi}|G_1|^2-2e^{2\phi}|\tG{3}|^2.
\end{equation*}
\end{greybox}
\end{cor}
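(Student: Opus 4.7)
The plan is to mimic the strategy used for the analogous IIA corollary: I will combine the intermediate scalar-curvature formula that appears in the proof of the Einstein equation in Ricci form with the dilaton equation in order to eliminate the Laplacian of $\phi$.

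First I would recall that, in the course of proving the IIB Einstein equation, the trace of the Einstein equation (i.e.~contracting with $g^{\mu\nu}$ on both sides) yields
\begin{equation*}
R=-\tfrac{9}{2}\Delta\phi+5|d\phi|^2+\tfrac{1}{4}|H_3|^2+\tfrac{1}{2}e^{2\phi}|G_1|^2+\tfrac{1}{4}e^{2\phi}|\tG{3}|^2.
\end{equation*}
Here the $\tG{5}$ terms drop out, since the self-duality condition $*\tG{5}=\tG{5}$ forces $|\tG{5}|^2\mathrm{dvol}_g=\tG{5}\wedge\tG{5}=0$ (a $5$-form wedged with itself on a $10$-manifold vanishes), so the only trace contribution coming from $\tG{5}$ already cancels at this stage.

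Next I would substitute the dilaton equation
\begin{equation*}
\Delta\phi=2|d\phi|^2-\tfrac{1}{2}|H_3|^2+e^{2\phi}|G_1|^2+\tfrac{1}{2}e^{2\phi}|\tG{3}|^2
\end{equation*}
into the expression for $R$. Multiplying by $-\tfrac{9}{2}$ and adding the remaining terms gives, coefficient by coefficient: $-9|d\phi|^2+5|d\phi|^2=-4|d\phi|^2$, $\tfrac{9}{4}|H_3|^2+\tfrac{1}{4}|H_3|^2=\tfrac{5}{2}|H_3|^2$, $-\tfrac{9}{2}e^{2\phi}|G_1|^2+\tfrac{1}{2}e^{2\phi}|G_1|^2=-4e^{2\phi}|G_1|^2$, and $-\tfrac{9}{4}e^{2\phi}|\tG{3}|^2+\tfrac{1}{4}e^{2\phi}|\tG{3}|^2=-2e^{2\phi}|\tG{3}|^2$. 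Adding up gives exactly the claimed formula.

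There is really no difficult step here beyond bookkeeping; the main thing to double-check is the consistency of the coefficients (the same type of arithmetic as in the IIA case) and, in particular, that the absence of $|\tG{5}|^2$ from the final answer is correctly accounted for by the self-duality of $\tG{5}$ rather than by an oversight. Once the arithmetic is verified, the corollary follows immediately.
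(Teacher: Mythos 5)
Your proposal is correct and follows the same route as the paper: the trace of the IIB Einstein equation gives $R=-\tfrac{9}{2}\Delta\phi+5|d\phi|^2+\tfrac{1}{4}|H_3|^2+\tfrac{1}{2}e^{2\phi}|G_1|^2+\tfrac{1}{4}e^{2\phi}|\tG{3}|^2$ (with the $\tG{5}$ contribution vanishing because self-duality forces $|\tG{5}|^2=0$), and substituting the dilaton equation yields exactly the stated formula. Your coefficient bookkeeping checks out.
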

We prove the Maxwell equations one after another.
\begin{lem} The Maxwell equation, obtained by varying $C_0$, is given by
\begin{equation*}
d*G_1=-H_3\wedge *\tG{3}.
\end{equation*}
\end{lem}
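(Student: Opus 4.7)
The plan is to isolate the terms in the string-frame IIB action that depend on $C_0$ and vary each with respect to $\delta C_0$. Only two pieces of the action depend on $C_0$: the Ramond-Ramond kinetic term $-\frac{1}{4\kappa_{10}^2}\int_M |G_1|^2\mathrm{dvol}_g$ through $G_1=dC_0$, and the term $-\frac{1}{4\kappa_{10}^2}\int_M|\tG{3}|^2\mathrm{dvol}_g$ through $\tG{3}=G_3-C_0H_3$. The NS-NS sector and the Chern-Simons term $C_4\wedge H_3\wedge G_3$ are independent of $C_0$, and $\tG{5}$ does not involve $C_0$ either.

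For the $G_1$ term I would apply Lemma \ref{lem:var k-form form C G=dC} with $k=0$, $C=C_0$, $G=G_1$, giving $\delta(G_1\wedge *G_1)=-2\,\delta C_0\wedge d*G_1+\mathrm{td}$. For the $\tG{3}$ term, since $\delta\tG{3}=-(\delta C_0)H_3$ (no derivative of $\delta C_0$ appears here), we get directly
\begin{equation*}
\delta(\tG{3}\wedge *\tG{3})=2\,\delta\tG{3}\wedge *\tG{3}=-2\,\delta C_0\wedge(H_3\wedge *\tG{3}),
\end{equation*}
with no integration by parts needed since $\delta C_0$ is a $0$-form and can be freely commuted past the other forms.

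Combining these contributions, the total variation of the action with respect to $C_0$ is
\begin{equation*}
\delta S=\frac{1}{2\kappa_{10}^2}\int_M \delta C_0\wedge\left(d*G_1+H_3\wedge *\tG{3}\right)+\mathrm{td}.
\end{equation*}
Since $\delta C_0$ is arbitrary, stationarity of $S$ yields $d*G_1=-H_3\wedge *\tG{3}$, which is the claimed Maxwell equation.

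There is no serious obstacle here; the only point requiring some care is the sign bookkeeping in the integration by parts for the $G_1$ term and the verification that no other summand (in particular $\tG{5}$ or the Chern-Simons piece) secretly involves $C_0$. Both checks are immediate from the definitions given in the definition of the IIB action.
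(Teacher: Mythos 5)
Your proposal is correct and follows essentially the same route as the paper: vary only the $|G_1|^2$ and $|\tG{3}|^2$ terms (noting that the NS--NS, $\tG{5}$ and Chern--Simons pieces are independent of $C_0$), integrate by parts on $\delta(G_1\wedge *G_1)$ and use $\delta\tG{3}=-\delta C_0\,H_3$ algebraically. The only cosmetic difference is that you invoke Lemma \ref{lem:var k-form form C G=dC} with $k=0$ where the paper redoes the one-line integration by parts explicitly; the signs and the final coefficient agree.
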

\begin{proof}
We have under variations of $C_0$
\begin{align*}
\delta \left(G_1\wedge *G_1\right)&=2\delta dC_0\wedge *G_1\\
&=2d(\delta C_0*G_1)-2\delta C_0d*G_1
\end{align*}
and
\begin{align*}
\delta \left(\tG{3}\wedge * \tG{3}\right)&=2\delta\tG{3}\wedge *\tG{3}\\
&=-2\delta C_0H_3\wedge *\tG{3}.
\end{align*}
This implies the equation.
\end{proof}

\begin{lem} The Maxwell equation, obtained by varying $C_4$, is given by
\begin{equation*}
d*\tG{5}=H_3\wedge \tG{3}.
\end{equation*}
\end{lem}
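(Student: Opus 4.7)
The plan is to vary the action with respect to $C_4$, keeping in mind that $C_4$ appears only via $G_5=dC_4$ in the kinetic term for $\tG{5}$ inside $S_{RIIB}$, and explicitly in the Chern--Simons term $S_{CSIIB}$. The other combinations $-\tfrac{1}{2}H_3\wedge C_2$ and $\tfrac{1}{2}G_3\wedge B_2$ in the definition of $\tG{5}$ are independent of $C_4$, so $\delta\tG{5}=d\,\delta C_4$ under this variation.

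First I would handle the kinetic piece. The proof of Lemma \ref{lem:var k-form form C G=dC} only uses $\delta G=d\,\delta C$, so the same integration by parts applies to $\tG{5}$ with $k=4$, yielding
\begin{equation*}
\delta\bigl(\tfrac{1}{2}\tG{5}\wedge *\tG{5}\bigr)=-\,\delta C_4\wedge d*\tG{5}+\mathrm{td}.
\end{equation*}
Next, for the Chern--Simons term the variation is immediate:
\begin{equation*}
\delta(C_4\wedge H_3\wedge G_3)=\delta C_4\wedge H_3\wedge G_3.
\end{equation*}
Combining both contributions with their common prefactor $-\tfrac{1}{4\kappa_{10}^2}$ and demanding that the variation vanish for arbitrary $\delta C_4$ gives
\begin{equation*}
d*\tG{5}-H_3\wedge G_3=0.
\end{equation*}

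Finally, since $H_3$ has odd degree $3$ we have $H_3\wedge H_3=0$, hence
\begin{equation*}
H_3\wedge G_3=H_3\wedge(G_3-C_0H_3)=H_3\wedge\tG{3},
\end{equation*}
which converts the result into the form stated in the lemma. The main thing to keep straight is the sign in the integration by parts (here $(-1)^{k+1}=-1$ with $k=4$), together with the observation that $\tG{5}$ behaves like $dC_4$ under variations of $C_4$ even though it is not globally of that form; everything else is bookkeeping.
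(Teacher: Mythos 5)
Your proposal is correct and follows essentially the same route as the paper: vary only $C_4$ (so $\delta\tG{5}=d\,\delta C_4$), integrate the kinetic term by parts, take the immediate variation of the Chern--Simons term, and finish with $H_3\wedge G_3=H_3\wedge\tG{3}$ since $H_3\wedge H_3=0$. The only cosmetic difference is that you cite Lemma \ref{lem:var k-form form C G=dC} for the sign $(-1)^{k+1}$, while the paper redoes the integration by parts explicitly; the content is identical.
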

\begin{proof}
We have under variations of $C_4$
\begin{align*}
\delta \left(\tG{5}\wedge *\tG{5}\right)&=2\delta \tG{5}\wedge *\tG{5}\\
&=2\delta dC_4\wedge *\tG{5}\\
&=2d\left(\delta C_4\wedge *\tG{5}\right)-2\delta C_4\wedge d*\tG{5}.
\end{align*}
This implies
\begin{equation*}
\delta\left(\frac{1}{2}\tG{5}\wedge *\tG{5}+C_4\wedge H_3\wedge G_3\right)=d(\ldots)+\delta C_4\wedge (-d*\tG{5}+H_3\wedge G_3)
\end{equation*}
and thus
\begin{equation*}
d*\tG{5}=H_3\wedge G_3=H_3\wedge \tG{3}
\end{equation*}
by the definition of $\tG{3}$.
\end{proof}

\begin{lem} The Maxwell equation, obtained by varying $C_2$, is given by
\begin{equation*}
d*\tG{3}=-H_3\wedge \tG{5}.
\end{equation*}
\end{lem}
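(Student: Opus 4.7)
The plan is to take the variation of $S = S_{NS} + S_{RIIB} + S_{CSIIB}$ with respect to $C_2$, read off the Euler--Lagrange equation, and then simplify it using the already proved Maxwell equation for $\tG{5}$ together with the self-duality $*\tG{5}=\tG{5}$. The only pieces of the action that depend on $C_2$ are the $|\tG{3}|^2$ and $\frac{1}{2}|\tG{5}|^2$ terms in $S_{RIIB}$ (since $\tG{3}=dC_2-C_0H_3$ and $\tG{5}=G_5-\tfrac{1}{2}H_3\wedge C_2+\tfrac{1}{2}G_3\wedge B_2$ both contain $C_2$) and the Chern--Simons term $C_4\wedge H_3\wedge G_3$.

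First I would compute
\begin{equation*}
\delta\tG{3}=d\delta C_2,\qquad \delta\tG{5}=-\tfrac{1}{2}H_3\wedge\delta C_2+\tfrac{1}{2}d\delta C_2\wedge B_2,
\end{equation*}
and use the standard identity $\delta(F\wedge *F)=2\,\delta F\wedge *F$, integrating by parts each $d\delta C_2$ factor. For $|\tG{3}|^2$ this follows the pattern of Lemma~\ref{lem:var k-form form C G=dC} and yields $-2\,\delta C_2\wedge d*\tG{3}$ modulo a total differential. For $\tfrac{1}{2}|\tG{5}|^2$ the explicit $-\tfrac{1}{2}H_3\wedge C_2$ contributes $-\delta C_2\wedge H_3\wedge *\tG{5}$, while the $\tfrac{1}{2}G_3\wedge B_2$ part, after moving the exterior derivative off $\delta C_2$ and using $dB_2=H_3$, contributes $-\tfrac{1}{2}\delta C_2\wedge H_3\wedge*\tG{5}-\tfrac{1}{2}\delta C_2\wedge B_2\wedge d*\tG{5}$ modulo td. The Chern--Simons variation gives $C_4\wedge H_3\wedge d\delta C_2$, which after integration by parts (using $dH_3=0$ and $dC_4=G_5$) becomes $-\delta C_2\wedge H_3\wedge G_5$ modulo td.

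Collecting the coefficient of $\delta C_2$ and setting it to zero then produces, up to the factor $-\tfrac{1}{4\kappa_{10}^2}$,
\begin{equation*}
-2\,d*\tG{3}-H_3\wedge *\tG{5}-\tfrac{1}{2}B_2\wedge d*\tG{5}-H_3\wedge G_5=0.
\end{equation*}
At this stage I would substitute $G_5=\tG{5}+\tfrac{1}{2}H_3\wedge C_2-\tfrac{1}{2}G_3\wedge B_2$, noting $H_3\wedge H_3=0$, and use $H_3\wedge G_3=H_3\wedge\tG{3}$. Applying the previously derived Maxwell equation $d*\tG{5}=H_3\wedge\tG{3}$ causes the $B_2\wedge d*\tG{5}$ term and the $H_3\wedge G_3\wedge B_2$ term to cancel, leaving $-2\,d*\tG{3}-H_3\wedge*\tG{5}-H_3\wedge\tG{5}=0$. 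Finally invoking the self-duality $*\tG{5}=\tG{5}$ gives the claimed equation $d*\tG{3}=-H_3\wedge\tG{5}$.

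The main obstacle is bookkeeping: keeping track of all the sign factors when shifting $\delta C_2$ past the odd-degree forms $H_3$ and $G_5$, and recognizing that the two pieces coming from $\tG{5}$'s implicit $C_2$-dependence and from the Chern--Simons term must be reorganized via the already-established Maxwell equation for $\tG{5}$ before the self-duality condition can be applied; without that reorganization one appears to obtain an inhomogeneous expression rather than the clean form stated.
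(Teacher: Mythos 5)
Your proposal is correct and follows essentially the same route as the paper: vary $C_2$ in the $|\tG{3}|^2$, $\tfrac12|\tG{5}|^2$ and Chern--Simons terms, integrate by parts to get the collected equation $2d*\tG{3}+H_3\wedge *\tG{5}+\tfrac12 B_2\wedge d*\tG{5}+H_3\wedge G_5=0$, and then eliminate the $B_2$- and $G_5$-terms using the already derived Maxwell equation $d*\tG{5}=H_3\wedge\tG{3}$ together with the definition of $\tG{5}$ before imposing self-duality. (The only blemish is a factor-of-$\tfrac12$ inconsistency in how you apportion the two intermediate contributions from $\tfrac12|\tG{5}|^2$ -- one piece is quoted without the overall $\tfrac12$, the other with it -- but your collected Euler--Lagrange equation and everything after it are correct.)
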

\begin{proof}
We have under variations of $C_2$
\begin{align*}
\delta \left(\tG{3}\wedge *\tG{3}\right)&=2\delta \tG{3}\wedge *\tG{3}\\
&=2\delta dC_2\wedge *\tG{3}\\
&=2d\left(\delta C_2\wedge *\tG{3}\right)-2\delta C_2\wedge d*\tG{3}.
\end{align*}
Similarly,
\begin{align*}
\delta \left(\tG{5}\wedge *\tG{5}\right)&=2\delta \tG{5}\wedge *\tG{5}\\
&=(-H_3\wedge \delta C_2+d\delta C_2\wedge B_2)\wedge *\tG{5}\\
&=-\delta C_2\wedge (H_3\wedge *\tG{5})+d(\delta C_2\wedge B_2\wedge *\tG{5})-\delta C_2\wedge(H_3\wedge *\tG{5}+B_2\wedge d*\tG{5})\\
&=d(\ldots)-\delta C_2(2H_3\wedge *\tG{5}+B_2\wedge d*\tG{5}).
\end{align*}
Finally
\begin{align*}
\delta(C_4\wedge H_3\wedge G_3)&=C_4\wedge H_3\wedge d\delta C_2\\
&=-d(C_4\wedge H_3\wedge \delta C_2)+G_5\wedge H_3\wedge \delta C_2\\
&=d(\ldots)+\delta C_2\wedge G_5\wedge H_3.
\end{align*}
This implies
\begin{equation*}
0=2d*\tG{3}+H_3\wedge *\tG{5}+\frac{1}{2} B_2\wedge d*\tG{5}-G_5\wedge H_3.
\end{equation*}
But the Maxwell equation for $\tG{5}$ and its defintion imply
\begin{align*}
\frac{1}{2}B_2\wedge d*\tG{5}-G_5\wedge H_3&=\frac{1}{2}H_3\wedge G_3\wedge B_2+H_3\wedge G_5\\
&=H_3\wedge\left(\frac{1}{2}G_3\wedge B_2+G_5\right)\\
&=H_3\wedge \tG{5}.
\end{align*}
Adding self-duality of $\tG{5}$ we get
\begin{equation*}
0=2d*\tG{3}+2H_3\wedge \tG{5}.
\end{equation*}
This implies the Maxwell equation for $\tG{3}$.
\end{proof}

\begin{lem} The Maxwell equation, obtained by varying $B_2$, is given by
\begin{equation*}
d\left(e^{-2\phi}*H_3\right)=\tG{3}\wedge\tG{5}+G_1\wedge*\tG{3}.
\end{equation*}
\end{lem}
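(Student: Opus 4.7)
The plan is to collect all the terms in the action that depend on $B_2$ -- namely the $e^{-2\phi}|H_3|^2$ piece in $S_{NS}$, the $|\tG{3}|^2$ and $\tfrac{1}{2}|\tG{5}|^2$ pieces in $S_{RIIB}$, and the Chern-Simons term $C_4\wedge H_3\wedge G_3$ -- vary each of them, integrate by parts to isolate a wedge factor $\delta B_2$, and set the resulting $8$-form coefficient of $\delta B_2$ to zero. The variations of the field strengths under $B_2\mapsto B_2+\delta B_2$ are $\delta H_3=d\delta B_2$, $\delta \tG{3}=-C_0\,d\delta B_2$, and $\delta \tG{5}=-\tfrac{1}{2}\,d\delta B_2\wedge C_2+\tfrac{1}{2}G_3\wedge\delta B_2$, while $\delta G_1=0$.

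For the NS term I would use Lemma \ref{lem:var k-form form C G=dC}-style integration by parts to get a contribution $+\delta B_2\wedge d(e^{-2\phi}*H_3)$. For the $|\tG{3}|^2$ term the factor $C_0$ is not closed, so the IBP produces both a $G_1\wedge *\tG{3}$ term and a $C_0\,d*\tG{3}$ remainder. For $|\tG{5}|^2$, the first piece of $\delta\tG{5}$ gives (after IBP) a $G_3\wedge *\tG{5}$ term plus a $C_2\wedge d*\tG{5}$ remainder, while the second piece gives another $G_3\wedge *\tG{5}$. Finally the Chern-Simons term $C_4\wedge H_3\wedge G_3$ yields, after one IBP and using $dG_3=0$, a $G_5\wedge G_3$ (equivalently $G_3\wedge G_5$, up to sign) wedged against $\delta B_2$. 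Collecting all contributions with their weights $\tfrac{1}{2\kappa_{10}^2}$ and $-\tfrac{1}{4\kappa_{10}^2}$ and equating the coefficient of $\delta B_2$ to zero gives a preliminary identity of the form
\begin{equation*}
d(e^{-2\phi}*H_3)=G_1\wedge *\tG{3}+C_0\,d*\tG{3}+\tfrac{1}{2}G_3\wedge *\tG{5}+\tfrac{1}{4}C_2\wedge d*\tG{5}+\tfrac{1}{2}G_3\wedge G_5.
\end{equation*}

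To reach the stated Maxwell equation I would then substitute the previously proved Maxwell equations $d*\tG{3}=-H_3\wedge\tG{5}$ and $d*\tG{5}=H_3\wedge\tG{3}$, the self-duality $*\tG{5}=\tG{5}$, and expand $G_5=\tG{5}-\tfrac{1}{2}G_3\wedge B_2+\tfrac{1}{2}H_3\wedge C_2$ together with $G_3=\tG{3}+C_0H_3$. Using $G_3\wedge G_3=0$ and $H_3\wedge H_3=0$ (both odd-degree forms), the terms involving $C_0H_3\wedge\tG{5}$ cancel between $C_0\,d*\tG{3}$ and the expansion of $\tfrac{1}{2}G_3\wedge G_5$, leaving $\tG{3}\wedge\tG{5}$; the residual $C_2\wedge H_3\wedge\tG{3}$ and $G_3\wedge H_3\wedge C_2$ pieces combine, via $\tG{3}-G_3=-C_0H_3$, into $-\tfrac{1}{4}C_0\,H_3\wedge C_2\wedge H_3=0$. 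The remaining terms are exactly $G_1\wedge *\tG{3}+\tG{3}\wedge\tG{5}$, as claimed.

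The main obstacle is purely bookkeeping: correctly tracking the Koszul signs from moving $\delta B_2$ across forms of various degrees during the integrations by parts, and making sure that when the preliminary identity is simplified by the other Maxwell equations and the self-duality constraint, the many Chern-Simons-type cross terms involving $C_0$, $C_2$ and $B_2$ cancel exactly. The key algebraic input that makes this cancellation clean is the vanishing $H_3\wedge H_3=0$ together with the identity $G_3-\tG{3}=C_0H_3$.
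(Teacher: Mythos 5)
Your proposal is correct and follows essentially the same route as the paper: vary the $e^{-2\phi}|H_3|^2$, $|\tG{3}|^2$, $\tfrac{1}{2}|\tG{5}|^2$ and Chern--Simons terms, integrate by parts to isolate $\delta B_2$, arrive at the same preliminary identity (your coefficients and signs agree with the paper's intermediate equation), and then simplify using the previously derived Maxwell equations for $\tG{3}$ and $\tG{5}$, self-duality, and the definitions of $\tG{3},\tG{5}$. The only difference is cosmetic: the paper groups $\tfrac{1}{2}C_2\wedge d*\tG{5}-G_5\wedge G_3$ into $G_3\wedge\tG{5}$ first and then applies the $\tG{3}$ equation, whereas you substitute everything at once and track the cancellations (which do work out as you claim).
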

\begin{proof}
We have as before
\begin{align*}
e^{-2\phi}\delta(H_3\wedge *H_3)&=2 \delta dB_2\wedge e^{-2\phi}*H_3\\
&=2d\left(\delta B_2\wedge e^{-2\phi}*H_3\right)-2\delta B_2\wedge d\left(e^{-2\phi}*H_3\right).
\end{align*}
Furthermore,
\begin{align*}
\delta\left(\tG{3}\wedge *\tG{3}\right)&=2\delta\tG{3}\wedge *\tG{3}\\
&=-2C_0d\delta B_2\wedge*\tG{3}\\
&=-2d\left(C_0\delta B_2\wedge *\tG{3}\right)+2G_1\wedge \delta B_2\wedge *\tilde{G}_3+2C_0\delta B_2\wedge d*\tG{3}\\
&=d(\ldots)+2\delta B_2\wedge\left(G_1\wedge *\tG{3}+C_0d*\tG{3}\right)
\end{align*}
and
\begin{align*}
\delta\left(\tG{5}\wedge *\tG{5}\right)&=2\delta\tG{5}\wedge*\tG{5}\\
&=-d\delta B_2\wedge C_2\wedge*\tG{5}+G_3\wedge \delta B_2\wedge*\tG{5}\\
&=-d\left(\delta B_2\wedge C_2\wedge*\tG{5}\right)+\delta B_2\wedge G_3\wedge*\tG{5}\\
&\quad +\delta B_2\wedge C_2\wedge d*\tG{5}+G_3\wedge \delta B_2\wedge*\tG{5}\\
&=d(\ldots)+\delta B_2\wedge\left(2G_3\wedge*\tG{5}+ C_2\wedge d*\tG{5}\right).
\end{align*}
Finally,
\begin{align*}
\delta(C_4\wedge H_3\wedge G_3)&=C_4\wedge d\delta B_2\wedge G_3\\
&=d\left(C_4\wedge \delta B_2\wedge G_3\right)-G_5\wedge \delta B_2\wedge G_3\\
&=d(\ldots)-\delta B_2\wedge G_5\wedge G_3.
\end{align*}
We get
\begin{align*}
0&=-2d\left(e^{-2\phi}*H_3\right)+2\left(G_1\wedge *\tG{3}+C_0d*\tG{3}\right)\\
&\quad +\left(G_3\wedge*\tG{5}+ \frac{1}{2}C_2\wedge d*\tG{5}\right)- G_5\wedge G_3.
\end{align*}
Using the Maxwell equation for $\tG{5}$ we have
\begin{align*}
\frac{1}{2}C_2\wedge d*\tG{5}&=\frac{1}{2}C_2\wedge H_3\wedge \tG{3}\\
&=\frac{1}{2}C_2\wedge H_3\wedge G_3,
\end{align*}
so that
\begin{align*}
\frac{1}{2}C_2\wedge d*\tG{5}- G_5\wedge G_3&=\left(\frac{1}{2}C_2\wedge H_3-G_5\right)\wedge G_3\\
&=\left(\frac{1}{2}G_3\wedge B_2-\tG{5}\right)\wedge G_3\\
&=G_3\wedge\tG{5}.
\end{align*}
Using the Maxwell equation for $\tG{3}$ we have
\begin{align*}
2C_0d*\tG{3}+2G_3\wedge\tG{5}&=(-2C_0H_3+2G_3)\wedge\tG{5}\\
&=2\tG{3}\wedge\tG{5}.
\end{align*}
We get
\begin{equation*}
0=-2d\left(e^{-2\phi}*H_3\right)+2 G_1\wedge*\tG{3}+2\tG{3}\wedge\tG{5}.
\end{equation*}
This implies the claim.
\end{proof}
Finally, concerning the Bianchi identities, it is easy to see that the last two equations are equivalent to $G_3$ and $G_5$ being closed.

\subsection{Supersymmetry in the string frame}

In addition to the bosonic fields, the type IIB supergravity contains two left-handed Majorana-Weyl gravitinos and two right-handed Majorana-Weyl dilatinos. We consider a doublet of left-handed Majorana-Weyl spinors
\begin{equation*}
\varep=\left(\begin{array}{c}\varep_1\\ \varep_2  \end{array}\right)\in S_+\oplus S_+.
\end{equation*}
Recall that the Pauli matrices are given by
\begin{equation*}
\sigma_1=\left(\begin{array}{cc} 0 & 1 \\ 1 & 0  \end{array}\right),\quad \sigma_2=\left(\begin{array}{cc} 0 & -i \\ i & 0  \end{array}\right),\quad \sigma_3=\left(\begin{array}{cc} 1 & 0 \\ 0 & -1  \end{array}\right).
\end{equation*}
They act on spinor-doublets. We set
\begin{equation*}
\lambda_1=\sigma_1,\quad \lambda_2=i\sigma_2,\quad \lambda_3=\sigma_3,
\end{equation*}
which span $\mathfrak{sl}(2,\mathbb{R})$. We also define the following $\mathfrak{sl}(2,\mathbb{R})$ valued differential forms:
\begin{greybox}
\begin{align*}
\tilde{\Omega}&=\frac{1}{8}e^\phi\left(G_1\lambda_2-\tG{3}\lambda_1+\frac{1}{2}\tG{5}\lambda_2\right)\\
\Omega&=e^{\phi}\left(\frac{1}{2}\tG{3}\lambda_1-G_1\lambda_2\right).
\end{align*}
\end{greybox}
The following version of the Killing spinor equations can be found in \cite{FigK}, who refer to \cite{BRKR} and \cite{HS}.
\begin{thm}\label{Fig IIB Killing spinor} A bosonic solution to the IIB field equations is supersymmetric if and only if there exists a non-zero left-handed Majorana-Weyl spinor-doublet $\varep$ solving the following two equations:
\begin{enumerate}
\item {\bf Gravitino Killing spinor equation:}
\begin{greybox}
\begin{equation*}
0=\nabla_X\varep+\frac{1}{4}(i_XH_3)\lambda_3\cdot\varep+\tilde{\Omega}\cdot(X\cdot\varep).
\end{equation*}
\end{greybox}
\item {\bf Dilatino Killing spinor equation:}
\begin{greybox}
\begin{equation*}
0=\left(d\phi+\frac{1}{2}H_3\lambda_3+\Omega\right)\cdot\varep.
\end{equation*}
\end{greybox}
\end{enumerate}
\end{thm}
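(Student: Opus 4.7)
The plan is to derive both Killing spinor equations as the vanishing conditions, on a purely bosonic background, for the supersymmetry variations of the type IIB fermions: the two Majorana-Weyl gravitinos (combined into a left-handed doublet $\psi_M$) and the two Majorana-Weyl dilatinos (combined into a right-handed doublet $\lambda$). Because the fermionic fields are set to zero, every SUSY variation of a bosonic field vanishes automatically, so invariance of the background under an infinitesimal supersymmetry generated by $\varep\in S_+\oplus S_+$ is equivalent to $\delta_\varep\psi_M=0$ (giving the differential gravitino equation) together with $\delta_\varep\lambda=0$ (giving the algebraic dilatino equation).

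First I would take as input the component-form SUSY transformations in \cite{BRKR} and \cite{HS}. Both $\delta_\varep\psi_M$ and $\delta_\varep\lambda$ are sums of terms of the form (bosonic field contracted into a product of $\Gamma$-matrices) times (Pauli matrix acting on the doublet) applied to $\varep$. The assignment of Pauli matrices is forced by the requirement that every term respect the chirality and doublet structure appropriate to the equation. Since in dimension ten Clifford multiplication by $\Gamma^{a_1\cdots a_k}$ flips chirality for $k$ odd and preserves it for $k$ even, the NS-NS 3-form $H_3$ (appearing in the gravitino equation only through $i_XH_3$, which has even degree) pairs with the diagonal $\lambda_3=\sigma_3$, while the RR forms $G_1,\tG{3},\tG{5}$ must pair with the off-diagonal matrices $\lambda_1=\sigma_1$ and $\lambda_2=i\sigma_2$ (both real, so that the combinations act sensibly on real Majorana-Weyl doublets) in order that, after accounting for the chirality flip induced by each outer Clifford multiplication by $X$, the final expression lies in $S_+\oplus S_+$ for the gravitino equation and in $S_-\oplus S_-$ for the dilatino equation.

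Second I would translate the index-heavy $\Gamma$-matrix expressions into the coordinate-free Clifford multiplication of Section \ref{sect:lin alg spinors} using the dictionary $\frac{1}{k!}F_{a_1\cdots a_k}\Gamma^{a_1\cdots a_k}\varep=F\cdot\varep$. A free Lorentz index $M$ contracted into a $\Gamma$ product becomes either $i_XF$ or an outer Clifford multiplication by $X$, depending on where the free $\Gamma^M$ sits; the identity
\begin{equation*}
X\cdot(F\cdot\varep)+(-1)^{k+1}F\cdot(X\cdot\varep)=(2i_XF)\cdot\varep
\end{equation*}
from Section \ref{sect:lin alg spinors} is precisely the tool needed to move between the two presentations. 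This allows the RR contributions to $\delta_\varep\psi_M$ to be gathered into the single expression $\tilde\Omega\cdot(X\cdot\varep)$, while the NS-NS contribution is kept in the form $(i_XH_3)\lambda_3\cdot\varep$. The overall coefficients (the common prefactor $\tfrac{1}{8}e^\phi$ in $\tilde\Omega$, the relative $\tfrac{1}{2}$ in front of $\tG{5}$ there, the prefactor $e^\phi$ in $\Omega$ and the absence of a $\tG{5}$-term in $\Omega$, which reflects the fact that the self-dual five-form drops out of the dilatino variation) are then simply read off.

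The main obstacle is not conceptual but book-keeping: the references \cite{FigK}, \cite{BRKR}, \cite{HS}, \cite{BBS} and \cite{Polchinski} use subtly different sign conventions for the chirality operator $\Gamma_{11}$, for the signs of the RR potentials $C_0,C_2,C_4$, and for the normalization of physical versus mathematical Clifford multiplication. The substantive work in the proof consists in tracking all of these conventions and verifying that, once translated into the conventions fixed in Section \ref{sect:lin alg spinors}, the stated form of $\tilde\Omega,\Omega$ and of the two equations is reproduced. As a global consistency check one can verify that on flat Minkowski space with vanishing fluxes and constant dilaton both equations reduce to $\nabla_X\varep=0$, so that the full $32$-dimensional space of parallel left-handed Majorana-Weyl doublets solves them, recovering the expected maximal number of supersymmetries.
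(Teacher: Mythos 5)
The paper does not actually prove this theorem: it is quoted from the literature (\cite{FigK}, who refer to \cite{BRKR} and \cite{HS}), and the only thing proved in this subsection is the subsequent Proposition establishing equivalence with the complex Weyl-spinor form of \cite{BBS}. (Contrast this with type IIA, where the paper genuinely derives the Killing spinor equations by dimensional reduction from eleven dimensions in Section \ref{chapt:dim reduct}; no analogous self-contained derivation is attempted for IIB.) Your overall framework is the right one and matches the paper's discussion in the introduction: with the fermions set to zero, the bosonic fields are automatically invariant, so supersymmetry of the background is equivalent to the vanishing of the gravitino and dilatino variations evaluated on the bosonic fields, and the translation from component $\Gamma$-matrix expressions to coordinate-free Clifford multiplication via the identity $X\cdot(F\cdot\varep)+(-1)^{k+1}F\cdot(X\cdot\varep)=(2i_XF)\cdot\varep$ is exactly the dictionary one needs.

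There are, however, two gaps. First, your claim that the Pauli-matrix assignment is \emph{forced} by chirality is not correct: chirality only sees the even/odd Clifford degree of each term, and every term in both equations has consistent chirality no matter which of $\mathrm{Id},\lambda_1,\lambda_2,\lambda_3$ multiplies it; the Majorana (reality) condition excludes $\sigma_2$ but cannot distinguish $\lambda_3$ from the identity, nor $\lambda_1$ from $\lambda_2$. The diagonal coupling of $H_3$ versus the off-diagonal coupling of the RR fields is a fact about the explicit IIB supersymmetry transformations (the NS-NS flux couples to the two spinors with opposite signs, the RR fluxes mix them) and must be taken from, or re-derived alongside, the references. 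Second, the quantitative content of the theorem --- the prefactors $\tfrac18 e^\phi$ and $e^\phi$ in $\tilde\Omega$ and $\Omega$, the relative $\tfrac12$ on $\tG{5}$, the $\tfrac14$ and $\tfrac12$ on the $H_3\lambda_3$ terms, and the sign conventions for the chirality operator and the RR potentials --- is exactly what you postpone as ``book-keeping'' and never verify, so as written your argument reduces to the same appeal to \cite{BRKR} and \cite{HS} that the paper itself makes, with an unchecked translation layered on top. A concrete way to close this within the paper's own toolkit is to check your translated doublet equations against the complex-spinor form (the Proposition following the theorem, taken from \cite{BBS}), component by component in $\varep_1,\varep_2$, which is precisely the consistency computation the paper does carry out; your Minkowski-space check is necessary but far too weak to fix the coefficients, since it is insensitive to all flux terms.
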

Note again that the dilatino Killing spinor equation does not involve derivatives of the spinor and is purely algebraic. The following version of the Killing spinor equations can be found in \cite{BBS}.
\begin{prop}Defining a complex left-handed Weyl spinor $\varepc=\varep_2+i\varep_1$ we can write the IIB Killing spinor equations equivalently as follows:
\begin{enumerate}
\item {\bf Gravitino Killing spinor equation:}
\begin{greybox}
\begin{align*}
0&=\nabla_X\varepc-\frac{1}{4}(i_XH_3)\cdot\varepc^*\\
&\quad+\frac{i}{8}e^{\phi}\left(G_1\cdot(X\cdot\varepc)-\tG{3}\cdot(X\cdot\varepc^*)+\frac{1}{2}\tG{5}\cdot(X\cdot\varepc)\right).
\end{align*}
\end{greybox}
\item {\bf Dilatino Killing spinor equation:}
\begin{greybox}
\begin{equation*}
0=d\phi\cdot\varepc-\frac{1}{2}H_3\cdot\varep_{\mathbb{C}}^*+ie^{\phi}\left(\frac{1}{2}\tG{3}\cdot \varepc^*-G_1\cdot\varepc\right).
\end{equation*}
\end{greybox}

\end{enumerate}
Here $\varepc^*=\varep_2-i\varep_1$ and the equations are supposed to hold for real and imaginary parts separately.
\end{prop}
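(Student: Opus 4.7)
The plan is to translate between the doublet formulation of Theorem~\ref{Fig IIB Killing spinor} and the complex formulation via the $\mathbb{R}$-linear isomorphism
\begin{equation*}
\Phi\colon S_+\oplus S_+\longrightarrow S_+\otimes_\mathbb{R}\mathbb{C},\qquad \varep=\begin{pmatrix}\varep_1\\ \varep_2\end{pmatrix}\longmapsto\varep_\mathbb{C}=\varep_2+i\varep_1,
\end{equation*}
whose inverse is $\varep_2=\tfrac{1}{2}(\varepc+\varepc^*)$, $\varep_1=\tfrac{1}{2i}(\varepc-\varepc^*)$ with $\varepc^*=\varep_2-i\varep_1$. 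Since Clifford multiplication by a form commutes with the scalar action of the $\lambda_i$'s on the doublet, the only computation needed is to see how $\Phi$ transports each $\lambda_i$. A direct check gives
\begin{align*}
\Phi(\lambda_3\varep)&=\Phi(\varep_1,-\varep_2)=-\varepc^*,\\
\Phi(\lambda_1\varep)&=\Phi(\varep_2,\varep_1)=i\varepc^*,\\
\Phi(\lambda_2\varep)&=\Phi(\varep_2,-\varep_1)=i\varepc.
\end{align*}
So $\lambda_3$ corresponds to $-(\cdot)^*$, $\lambda_1$ to $i(\cdot)^*$, and $\lambda_2$ to multiplication by $i$.

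With these three rules in hand, the verification is purely mechanical. For the gravitino equation, $\Phi$ sends $\nabla_X\varep\mapsto\nabla_X\varepc$, $\tfrac{1}{4}(i_XH_3)\lambda_3\cdot\varep\mapsto-\tfrac{1}{4}(i_XH_3)\cdot\varepc^*$, and each summand of $\tilde\Omega\cdot(X\cdot\varep)=\tfrac{1}{8}e^\phi\bigl(G_1\lambda_2-\tG{3}\lambda_1+\tfrac{1}{2}\tG{5}\lambda_2\bigr)\cdot(X\cdot\varep)$ picks up a factor of $i$ via the substitutions above, giving precisely the displayed right-hand side
\begin{equation*}
\tfrac{i}{8}e^{\phi}\bigl(G_1\cdot(X\cdot\varepc)-\tG{3}\cdot(X\cdot\varepc^*)+\tfrac{1}{2}\tG{5}\cdot(X\cdot\varepc)\bigr).
\end{equation*}
Exactly the same substitution into $\Omega\cdot\varep=e^\phi(\tfrac{1}{2}\tG{3}\lambda_1-G_1\lambda_2)\cdot\varep$ produces the claimed dilatino equation.

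To conclude equivalence, I would note that $\Phi$ is an $\mathbb{R}$-linear bijection and that all the operations appearing in the doublet equations (Levi-Civita covariant derivative, Clifford multiplication by real forms, and left multiplication by the real matrices $\lambda_i$) are $\mathbb{R}$-linear; hence vanishing of the doublet equation is equivalent to the vanishing of its image under $\Phi$. Conversely, since $\Phi$ becomes complex antilinear when composed with $(\cdot)^*$, the real and imaginary parts of the complex equations reproduce the two components of the doublet equations, which is why the proposition emphasizes that the complex equations must hold ``for real and imaginary parts separately.''

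The only subtle point, and hence the main place where one could slip, is keeping straight which $\lambda_i$ becomes a conjugation and which becomes a multiplication by $i$. Once the table above is fixed, the rest is simply distributing $\Phi$ through the Clifford factors (which commute with the $\lambda_i$ action) and collecting signs and $i$'s; no further analytic or spinorial input is needed.
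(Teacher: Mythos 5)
Your proposal is correct and is essentially the paper's own argument: the paper simply writes out the two component equations of the doublet system and observes that they are the real and imaginary parts of the complex equations, which is exactly what your dictionary $\lambda_3\mapsto -(\cdot)^*$, $\lambda_1\mapsto i(\cdot)^*$, $\lambda_2\mapsto i$ under $\varep\mapsto\varepc=\varep_2+i\varep_1$ encodes. Your translation table and the resulting signs and factors of $i$ all check out, so the only difference is packaging, not substance.
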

\begin{proof}We check equivalence of both versions of the equations. The gravitino Killing spinor equations from Theorem \ref{Fig IIB Killing spinor} are:
\begin{align*}
0&=\nabla_X\varep_1+\frac{1}{4}(i_XH)\cdot\varep_1\\
&\quad + \frac{1}{8}e^{\phi}\left(G_1\cdot(X\cdot\varep_2)-\tG{3}\cdot(X\cdot\varep_2)+\frac{1}{2}\tG{5}\cdot(X\cdot\varep_2)\right)\\
0&=\nabla_X\varep_2-\frac{1}{4}(i_XH)\cdot\varep_2\\
&\quad + \frac{1}{8}e^{\phi}\left(-G_1\cdot(X\cdot\varep_1)-\tG{3}\cdot(X\cdot\varep_1)-\frac{1}{2}\tG{5}\cdot(X\cdot\varep_1)\right).
\end{align*}
Similarly, the dilatino Killing spinor equations from Theorem \ref{Fig IIB Killing spinor} are:
\begin{align*}
0&=d\phi\cdot\varep_1+\frac{1}{2}H_3\cdot\varep_1+e^\phi\left(\frac{1}{2}\tG{3}\cdot\varep_2-G_1\cdot\varep_2\right)\\
0&=d\phi\cdot\varep_2-\frac{1}{2}H_3\cdot\varep_2+e^\phi\left(\frac{1}{2}\tG{3}\cdot\varep_1+G_1\cdot\varep_1\right).
\end{align*}
These are equal to the imaginary and real parts of the gravitino and dilatino Killing equations written with the complex Weyl spinor $\varepc$.
\end{proof}

\subsection{Einstein frame}

Using our calculations in the case of IIA, we see that in the Einstein frame with
\begin{equation*}
g_E=e^{-\phi/2}g
\end{equation*}
we have:
\begin{thm}
In the Einstein frame the bosonic part of the type IIB action is given by 
\begin{greybox}
\begin{equation*}
S=S^E_{NS}+S^E_{RIIB}+S_{CSIIB},
\end{equation*}
where
\begin{align*}
S^E_{NS}&=\frac{1}{2\kappa_{10}^2}\int_M\mathrm{dvol}_{g_E}\left(R_E-\frac{1}{2}|d\phi|^2_E-\frac{1}{2}e^{-\phi}[H_3|_E^2\right)\\
S^E_{RIIB}&=-\frac{1}{4\kappa_{10}^2}\int_M\mathrm{dvol}_{g_E}\left(e^{2\phi}|G_1|_E^2+e^{\phi}|\tG{3}|_E^2+\frac{1}{2}|\tG{5}|_E^2\right)
\end{align*}
and $S_{CSIIB}$ is the same as before.
\end{greybox}
\end{thm}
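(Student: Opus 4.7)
The plan is to apply the conformal transformation rules for metric-dependent quantities that were already established in the IIA Einstein frame section (Section~\ref{sect:IIA Einstein frame}) to each summand of the string-frame IIB action, and to observe that the Chern-Simons term is topological and therefore unchanged.

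First, I would note that the Neveu-Schwarz sector $S_{NS}$ of IIB has the \emph{identical} form as in IIA (same dilaton prefactor, same scalar and $|H_3|^2$ terms). Hence the lemma already proved for IIA applies verbatim:
$$\mathrm{dvol}_g\, e^{-2\phi}(R+4|d\phi|^2) = \mathrm{dvol}_{g_E}\left(R_E - \tfrac{1}{2}|d\phi|^2_E\right) + \mathrm{td}.$$
For the remaining $|H_3|^2$ contribution I would invoke the general form-norm rule $\mathrm{dvol}_g|F|^2 = e^{(5-k)\phi/2}\mathrm{dvol}_{g_E}|F|^2_E$ with $k=3$, which combined with the $e^{-2\phi}$ prefactor produces $e^{-\phi}\mathrm{dvol}_{g_E}|H_3|^2_E$. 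This reproduces $S^E_{NS}$.

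Next, I would transform $S_{RIIB}$ by applying the same form-norm rule to each field strength. Since $G_1$ is a 1-form, $\tG{3}$ is a 3-form and $\tG{5}$ is a 5-form, the exponents $(5-k)\phi/2$ yield prefactors $e^{2\phi}$, $e^{\phi}$ and $e^{0}=1$ respectively, giving exactly the stated $e^{2\phi}|G_1|^2_E$, $e^{\phi}|\tG{3}|^2_E$ and $\tfrac12|\tG{5}|^2_E$ integrands against $\mathrm{dvol}_{g_E}$. Note that the self-duality constraint $*\tG{5}=\tG{5}$, which is imposed alongside the action, is a metric-dependent condition that of course must be reinterpreted relative to $*_E$, but this does not affect the computation of the action itself.

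Finally, I would observe that $S_{CSIIB}=-\frac{1}{4\kappa_{10}^2}\int_M C_4\wedge H_3\wedge G_3$ is built purely from wedge products of differential forms, with no occurrence of $g$, $\mathrm{dvol}_g$, or the Hodge star; hence it is conformally invariant and remains unchanged under $g\mapsto g_E=e^{-\phi/2}g$. Summing the three pieces yields the claimed Einstein-frame action. The main obstacle is nothing more than careful bookkeeping of the exponents $(5-k)$ for each form degree; no new computations beyond what was already carried out in the IIA case are required.
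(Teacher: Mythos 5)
Your proposal is correct and is essentially the paper's own argument: the paper simply invokes the IIA Einstein-frame calculations (the lemma $\mathrm{dvol}_g\,e^{-2\phi}(R+4|d\phi|^2)=\mathrm{dvol}_{g_E}(R_E-\tfrac{1}{2}|d\phi|^2_E)+\mathrm{td}$ and the rule $\mathrm{dvol}_g|F|^2=e^{(5-k)\phi/2}\mathrm{dvol}_{g_E}|F|^2_E$) term by term, exactly as you do, with the Chern--Simons term untouched because it contains no metric. Your bookkeeping of the exponents for $k=1,3,5$ and the $e^{-2\phi}|H_3|^2$ term matches the stated result.
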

For the field equations we get:
\begin{thm}\label{thm:Einstein IIB} In the Einstein frame the bosonic equations of motion of type IIB supergravity are given by:
\begin{enumerate}
\item {\bf Einstein equation:}
\begin{greybox}
\begin{align*}
\mathrm{Ric}_E(X,Y)-\frac{1}{2}g_E(X,Y)R_E&=\frac{1}{2}\left(d\phi(X)d\phi(Y)-\frac{1}{2}g_E(X,Y)|d\phi|_E^2\right)\\
&\quad+\frac{1}{2}e^{-\phi}\left(\langle i_{X}H_3,i_{Y}H_3\rangle_E-\frac{1}{2}g_E(X,Y)|H_3|_E^2\right)\\
&\quad + \frac{1}{2}e^{2\phi}\left(G_1(X)G_1(Y)-\frac{1}{2}g_E(X,Y)|G_1|_E^2\right)\\
&\quad +\frac{1}{2}e^{\phi}\left(\langle i_{X}\tG{3},i_{Y}\tG{3}\rangle_E-\frac{1}{2}g_E(X,Y)|\tG{3}|_E^2\right)\\
&\quad +\frac{1}{4}\langle i_{X}\tG{5},i_{Y}\tG{5}\rangle_E
\end{align*}
\end{greybox}
or, equivalently, the {\bf Einstein equation in Ricci form:}
\begin{greybox}
\begin{align*}
\mathrm{Ric}_E(X,Y)&=\frac{1}{2}d\phi(X)d\phi(Y)\\
&\quad +\frac{1}{2}e^{-\phi}\left(\langle i_XH_3,i_YH_3\rangle_E-\frac{1}{4}g_E(X,Y)|H_3|_E^2\right)\\
&\quad + \frac{1}{2}e^{2\phi}G_1(X)G_1(Y)\\
&\quad +\frac{1}{2}e^{\phi}\left(\langle i_X\tG{3},i_Y\tG{3}\rangle_E-\frac{1}{4}g_E(X,Y)|\tG{3}|_E^2\right)\\
&\quad +\frac{1}{4}\langle i_X\tG{5},i_Y\tG{5}\rangle_E.
\end{align*}
\end{greybox}
\item {\bf Dilaton equation:}
\begin{greybox}
\begin{equation*}
\Delta_E\phi=-\frac{1}{2}e^{-\phi}|H_3|_E^2+e^{2\phi}|G_1|_E^2+\frac{1}{2}e^{\phi}|\tG{3}|^2_E.
\end{equation*}
\end{greybox}
\item {\bf Maxwell equations:}
\begin{greybox}
\begin{align*}
d\left(e^{2\phi}*_EG_1\right)&=-e^{\phi}H_3\wedge *_E\tG{3}\\
d*_E\tG{5}&=H_3\wedge \tG{3}\\
d\left(e^{\phi}*_E\tG{3}\right)&=-H_3\wedge \tG{5}\\
d\left(e^{-\phi}*_EH_3\right)&=\tG{3}\wedge\tG{5}+e^{\phi}G_1\wedge*_E\tG{3}.
\end{align*}
\end{greybox}
\item We add the {\bf self-duality equation:}
\begin{greybox}
\begin{equation*}
*\tG{5}=\tG{5}.
\end{equation*}
\end{greybox}
\item The {\bf Bianchi identities} stay the same:
\begin{greybox}
\begin{align*}
dH_3&=0\\
dG_1&=0\\
d\tG{3}&=H_3\wedge G_1\\
d\tG{5}&=H_3\wedge\tG{3}.
\end{align*}
\end{greybox}
\end{enumerate}
\end{thm}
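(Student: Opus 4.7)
The plan is to derive each equation by varying the appropriate field in the Einstein-frame action $S=S^E_{NS}+S^E_{RIIB}+S_{CSIIB}$ displayed just above the theorem, in exact analogy with the string-frame IIB proof and the Einstein-frame IIA proof. The crucial observation is that the conformal factors $e^{\lambda\phi}$ multiplying the various norms $|\cdot|^2_E$ do not depend on the metric, so Lemmas \ref{lem:var g}, \ref{lem:variation of form} and \ref{lem:var k-form form C G=dC} apply verbatim once every object is written in terms of $g_E$, $\mathrm{dvol}_{g_E}$, $|\cdot|_E$ and $*_E$.

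For the Einstein equation I would vary $g_E$. By Lemma \ref{lem:var g} the $R_E\,\mathrm{dvol}_{g_E}$ term produces the Einstein tensor of $g_E$; the kinetic term $-\tfrac{1}{2}|d\phi|^2_E\,\mathrm{dvol}_{g_E}$ contributes the standard scalar energy-momentum tensor $\tfrac{1}{2}(d\phi(X)d\phi(Y)-\tfrac{1}{2}g_E(X,Y)|d\phi|_E^2)$; and each $\phi$-dressed form term contributes, by Lemma \ref{lem:variation of form}, the corresponding form energy-momentum tensor carrying the factor $e^{\lambda\phi}$. The Chern-Simons piece is metric independent and drops out. Tracing the resulting identity with $g_E$ yields $R_E$ as a quadratic expression in $d\phi$ and the field strengths, which substituted back produces the Einstein equation in Ricci form.

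The dilaton equation then follows by varying $\phi$: the kinetic term contributes $+\Delta_E\phi$ after the integration by parts already performed in the IIA Einstein-frame proof, and differentiating each prefactor $e^{\lambda\phi}$ produces $\lambda\,e^{\lambda\phi}|F|^2_E$; combining with the NS $H_3$ term gives exactly the stated equation. The four Maxwell equations are obtained by varying $C_0$, $C_2$, $C_4$, $B_2$ in turn, applying Lemma \ref{lem:var k-form form C G=dC} with $*_E$ and keeping the factor $e^{\lambda\phi}$ inside the exterior derivative when integrating by parts, so that the divergences appear in the dressed form $d(e^{2\phi}*_EG_1)$, $d(e^{\phi}*_E\tG{3})$ and $d(e^{-\phi}*_EH_3)$ shown in the statement. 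The Chern-Simons cross-terms contribute the right-hand sides exactly as in the string-frame proof, since they are conformally invariant.

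The self-duality condition is unchanged because the conversion formula $*F=e^{(5-k)\phi/2}*_EF$ from Section \ref{sect:IIA Einstein frame} gives $*\tG{5}=*_E\tG{5}$ for $k=5$ in dimension ten, and the Bianchi identities are unchanged since they involve only $d$ and the metric-independent definitions of $\tG{3}$ and $\tG{5}$. The main bookkeeping obstacle will be the $B_2$ Maxwell variation, where contributions from the NS kinetic term $e^{-\phi}|H_3|_E^2$, the dressed RR terms $e^{\phi}|\tG{3}|_E^2$ and $\tfrac{1}{2}|\tG{5}|_E^2$, and the Chern-Simons piece must conspire---exactly as in the string-frame proof, with the $\tG{5}$ Maxwell equation used to absorb the $B_2\wedge d*_E\tG{5}$ contribution---to produce the clean right-hand side $\tG{3}\wedge\tG{5}+e^{\phi}G_1\wedge*_E\tG{3}$. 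An equivalent and in practice faster route is to substitute the conversion formulas $\mathrm{dvol}_g|F|^2=e^{(5-k)\phi/2}\mathrm{dvol}_{g_E}|F|^2_E$ and $*F=e^{(5-k)\phi/2}*_EF$ of Section \ref{sect:IIA Einstein frame} directly into each string-frame equation already proved.
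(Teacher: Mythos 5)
Your proposal is correct and follows essentially the same route as the paper: the paper proves only the dilaton equation explicitly, exactly by your argument (kinetic term giving $\Delta_E\phi$ plus the $\lambda e^{\lambda\phi}|F|_E^2$ terms from the prefactors), and notes that the Einstein, Maxwell, self-duality and Bianchi equations ``follow as before'' from the same variational lemmas applied in the Einstein-frame variables, which is precisely what you spell out. Your closing remark that one can alternatively transport the string-frame equations via $\mathrm{dvol}_g|F|^2=e^{(5-k)\phi/2}\mathrm{dvol}_{g_E}|F|^2_E$ and $*F=e^{(5-k)\phi/2}*_EF$ is a valid shortcut consistent with how the paper sets up the frame change.
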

\begin{rem}
These equations also appear in \cite{Fernand}.
\end{rem}
We only prove the dilaton equation, the remaining equations follow as before.
\begin{proof}
\begin{equation*}
\delta|d\phi|^2_E\mathrm{dvol}_{g_E}=d(\ldots)-2\delta\phi\Delta_E\phi\mathrm{dvol}_{g_E}.
\end{equation*}
Also
\begin{equation*}
\delta e^{\lambda\phi}=\lambda e^{\lambda\phi}\delta\phi.
\end{equation*}
This implies
\begin{equation*}
0=\Delta_E\phi+\frac{1}{2}e^{-\phi}|H_3|_E^2-e^{2\phi}|G_1|_E^2-\frac{1}{2}e^{\phi}|\tG{3}|^2_E
\end{equation*}
and hence the dilaton equation.
\end{proof}
\begin{cor}
The scalar curvature of a bosonic solution to the IIB Einstein equation in the Einstein frame is given by
\begin{greybox}
\begin{equation*}
R_E=\frac{1}{2}|d\phi|_E^2+\frac{1}{4}e^{-\phi}|H_3|_E^2+\frac{1}{2}e^{2\phi}|G_1|_E^2+\frac{1}{4}e^{\phi}|\tG{3}|_E^2.
\end{equation*}
\end{greybox}
\end{cor}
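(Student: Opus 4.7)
The plan is to take the metric trace of the Einstein equation (in its Ricci form, stated just above the corollary) with respect to $g_E^{\mu\nu}$ and use the standard identities to reduce the trace to the desired combination of norms. I will use the Ricci form rather than the original Einstein equation since it involves fewer coefficients.

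First I would contract $\mathrm{Ric}_E(\partial_\mu,\partial_\nu)$ with $g_E^{\mu\nu}$, which on the left hand side produces $R_E$. On the right hand side, I would evaluate each term using two basic facts: (i) for any $k$-form $G$ one has $g_E^{\mu\nu}\langle i_{\partial_\mu}G,i_{\partial_\nu}G\rangle_E = k|G|_E^2$, as proven in the lemma in Section~2, and (ii) $g_E^{\mu\nu}(g_E)_{\mu\nu} = 10$ in our ten-dimensional setting. The scalar-field contribution $\tfrac{1}{2}d\phi(X)d\phi(Y)$ contracts to $\tfrac{1}{2}|d\phi|_E^2$, the $G_1$ term contributes $\tfrac{1}{2}e^{2\phi}|G_1|_E^2$, and the $H_3$ and $\tG{3}$ terms (with their $-\tfrac{1}{4}g_E$ corrections) contract as
\begin{equation*}
\tfrac{1}{2}e^{-\phi}\bigl(3-\tfrac{10}{4}\bigr)|H_3|_E^2 = \tfrac{1}{4}e^{-\phi}|H_3|_E^2,\qquad \tfrac{1}{2}e^{\phi}\bigl(3-\tfrac{10}{4}\bigr)|\tG{3}|_E^2 = \tfrac{1}{4}e^{\phi}|\tG{3}|_E^2,
\end{equation*}
yielding precisely the coefficients claimed.

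The one subtle point, which I expect to be the main (though minor) obstacle, is the $\tG{5}$ contribution $\tfrac{1}{4}g_E^{\mu\nu}\langle i_{\partial_\mu}\tG{5},i_{\partial_\nu}\tG{5}\rangle_E = \tfrac{5}{4}|\tG{5}|_E^2$. I must note that this vanishes: the self-duality equation $*\tG{5}=\tG{5}$, which was explicitly added to the equations of motion, forces $|\tG{5}|_E^2\,\mathrm{dvol}_{g_E} = \tG{5}\wedge\tG{5}=0$ because $\tG{5}$ has odd degree, as already observed in the excerpt just after the definition of the IIB action. Collecting the four surviving contributions gives the stated formula for $R_E$, completing the argument.
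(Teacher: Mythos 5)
Your proposal is correct and follows essentially the same route as the paper: trace the Einstein equation using the identity $g^{\mu\nu}\langle i_{\partial_\mu}G,i_{\partial_\nu}G\rangle=k|G|^2$ and discard the $\tG{5}$ contribution because self-duality forces $|\tG{5}|^2=0$ (note that on $5$-forms in ten dimensions $*_E=*$, so the argument applies verbatim in the Einstein frame). Whether one traces the Ricci form, as you do, or the original Einstein equation, as in the paper's analogous string-frame computation, is an immaterial difference.
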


\subsection{Symmetric notation}

The field equations for the type IIB supergravity in the Einstein frame sometimes appear in a different form, for example, in \cite{Schwarz83}, \cite{Hull}, \cite{Schwarz95} and \cite{GMSW}, which we call the symmetric notation. We define the complex valued $1$-, $3$- and $0$-forms
\begin{greybox}
\begin{align*}
P&=\frac{i}{2}e^{\phi}G_1+\frac{1}{2}d\phi\\
G'_3&=ie^{\phi/2}(\tau H_3-G_3)\\
&=-ie^{\phi/2}\tilde{G_3}-e^{-\phi/2}H_3,
\end{align*}
where
\begin{equation*}
\tau=C_0+ie^{-\phi}
\end{equation*}
is the so-called axion-dilaton field.
\end{greybox}
We denote the complex conjugate of these fields by $^*$. The forms $G_1,d\phi,\tG{3}$ and $H_3$ can be obtained from the real and imaginary parts of these complex forms. We also define operators
\begin{greybox}
\begin{equation*}
D_q=\left(d-iqQ\wedge\right)
\end{equation*}
where $q\in\mathbb{R}$ and
\begin{equation*}
Q=-\frac{1}{2}e^{\phi}G_1.
\end{equation*}
\end{greybox}
\begin{thm}
With these fields the bosonic part of the type IIB action in the symmetric notation is given by
\begin{greybox}
\begin{equation*}
S=S^S_{NSRIIB}+S^S_{CSIIB}
\end{equation*}
with
\begin{align*}
S^S_{NSRIIB}&=\frac{1}{2\kappa_{10}^2}\int_M\mathrm{dvol}_{g_E}\left(R_E-\frac{1}{2(\mathrm{Im}\,\tau)^2}\langle d\tau,d\tau^*\rangle_E -\frac{1}{2}\mathcal{M}_{ij}\langle F_3^i,F_3^j\rangle_E-\frac{1}{4}|\tG{5}|^2_E\right)\\
S^S_{CSIIB}&=-\frac{1}{4\kappa_{10}^2}\int_MC_4\wedge F_3^1\wedge F_3^2,
\end{align*}
where
\begin{align*}
F_3&=\left(\begin{array}{c} H_3\\G_3\end{array}\right)\\
\mathcal{M}&=e^\phi\left(\begin{array}{cc}C_0^2+e^{-2\phi} & -C_0 \\ -C_0 & 1\end{array}\right).
\end{align*}
\end{greybox}
\end{thm}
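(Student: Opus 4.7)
The plan is to show that the symmetric-notation action agrees term by term with the Einstein-frame action established in the previous theorem. The scalar curvature and the $\tG{5}$ piece require nothing — in the earlier action the factor in front of $|\tG{5}|_E^2$ is $-\tfrac{1}{4\kappa_{10}^2}\cdot\tfrac{1}{2}=-\tfrac{1}{8\kappa_{10}^2}$, which is exactly the coefficient appearing as $-\tfrac{1}{4}|\tG{5}|_E^2$ inside $S^S_{NSRIIB}$. So the content reduces to three identities, which I would establish in the following order.

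First I would handle the axion-dilaton kinetic term. Using $\tau=C_0+ie^{-\phi}$ and $dC_0=G_1$, one has
\begin{equation*}
d\tau=G_1-ie^{-\phi}d\phi,\qquad d\tau^*=G_1+ie^{-\phi}d\phi,
\end{equation*}
so the cross terms cancel and a direct expansion gives $\langle d\tau,d\tau^*\rangle_E=|G_1|_E^2+e^{-2\phi}|d\phi|_E^2$. Since $(\mathrm{Im}\,\tau)^2=e^{-2\phi}$, dividing by $2(\mathrm{Im}\,\tau)^2$ reproduces $\tfrac{1}{2}|d\phi|_E^2+\tfrac{1}{2}e^{2\phi}|G_1|_E^2$, matching the dilaton and $G_1$ contributions to $S^E_{NS}+S^E_{RIIB}$.

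Next I would verify the quadratic form in the doublet $F_3=(H_3,G_3)^T$. Expanding the symmetric $2\times 2$ matrix gives
\begin{equation*}
\mathcal{M}_{ij}\langle F_3^i,F_3^j\rangle_E=e^{-\phi}|H_3|_E^2+e^\phi\left(C_0^2|H_3|_E^2-2C_0\langle H_3,G_3\rangle_E+|G_3|_E^2\right),
\end{equation*}
and the parenthesised combination is precisely $|G_3-C_0H_3|_E^2=|\tG{3}|_E^2$ by definition of $\tG{3}$. Multiplying by $\tfrac{1}{2}$ yields the $H_3$ and $\tG{3}$ terms of the Einstein-frame action. The Chern-Simons piece is immediate: $F_3^1\wedge F_3^2=H_3\wedge G_3$, so $S^S_{CSIIB}=S_{CSIIB}$ with no modification.

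The calculation is essentially algebra, but the one nonobvious step — and the one I would highlight — is recognising the completion of the square in the $\mathcal{M}$-contraction that repackages $G_3$ and the axion combination $C_0H_3$ into the gauge-invariant field strength $\tG{3}$. Once that identification is made, all four pieces match coefficient for coefficient and the two forms of the action coincide.
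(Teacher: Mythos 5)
Your proposal is correct and follows essentially the same route as the paper: the paper's proof consists precisely of the two identities $\frac{1}{2(\mathrm{Im}\,\tau)^2}\langle d\tau,d\tau^*\rangle_E=\frac{1}{2}|d\phi|_E^2+\frac{1}{2}e^{2\phi}|G_1|_E^2$ and $\frac{1}{2}\mathcal{M}_{ij}\langle F_3^i,F_3^j\rangle_E=\frac{1}{2}e^{-\phi}|H_3|_E^2+\frac{1}{2}e^{\phi}|\tG{3}|_E^2$, compared term by term with the Einstein-frame action. You simply supply the short computations (expansion of $d\tau$, completion of the square giving $|\tG{3}|_E^2$, and the trivial $\tG{5}$ and Chern--Simons matchings) that the paper leaves implicit.
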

\begin{proof}
We have
\begin{align*}
\frac{1}{2(\mathrm{Im}\,\tau)^2}\langle d\tau,d\tau^*\rangle_E&=\frac{1}{2}e^{2\phi}|G_1|^2+\frac{1}{2}|d\phi|^2\\
\frac{1}{2}\mathcal{M}_{ij}\langle F_3^i,F_3^j\rangle_E&=\frac{1}{2}e^{-\phi}|H_3|_E^2+\frac{1}{2}e^{\phi}|\tG{3}|_E^2.
\end{align*}
This implies the formula for the action.
\end{proof}
\begin{thm}
The IIB action in the symmetric notation is invariant under the $SL(2,\mathbb{R})$-action given by
\begin{greybox}
\begin{align*}
\mathcal{M}'&=\Lambda\mathcal{M}\Lambda^T\\
F'&=\left(\Lambda^T\right)^{-1}F\\
\tau'&=\frac{a\tau+b}{c\tau+d}\\
g'&=g\\
C_4'&=C_4,
\end{align*}
where
\begin{equation*}
\Lambda=\left(\begin{array}{cc} a& b\\ c & d\end{array}\right)\in SL(2,\mathbb{R}).
\end{equation*}
\end{greybox}
\end{thm}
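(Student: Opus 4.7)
The plan is to verify invariance of $S = S^S_{NSRIIB} + S^S_{CSIIB}$ term by term. Since the metric entering the action is declared invariant, the Einstein-Hilbert density $R_E\,\mathrm{dvol}_{g_E}$, the Hodge star $*_E$ and the pairings $\langle\cdot,\cdot\rangle_E$ on forms are all automatically fixed by the transformation, so the work lies entirely in checking the matter terms.

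For the axion-dilaton kinetic term I would use $\det\Lambda = 1$ to compute $d\tau' = (c\tau+d)^{-2}\,d\tau$ and the classical Möbius identity $\mathrm{Im}\,\tau' = \mathrm{Im}\,\tau / |c\tau+d|^{2}$; substituting into $(\mathrm{Im}\,\tau)^{-2}\langle d\tau,d\tau^{*}\rangle_E$, the factor $|c\tau+d|^{-4}$ from the numerator cancels the factor $|c\tau+d|^{4}$ from the denominator. For the three-form kinetic term, view $F_3$ as a column of $3$-forms and $\mathcal{M}$ as a symmetric $2\times 2$ matrix; by $\mathbb{R}$-bilinearity of $\langle\cdot,\cdot\rangle_E$ one has $\mathcal{M}_{ij}\langle F_3^{i},F_3^{j}\rangle_E = F_3^{T}\mathcal{M}F_3$ formally, and substituting $F_3' = (\Lambda^{T})^{-1}F_3$ together with $\mathcal{M}' = \Lambda\mathcal{M}\Lambda^{T}$ produces the telescoping product $F_3^{T}\Lambda^{-1}\Lambda\mathcal{M}\Lambda^{T}(\Lambda^{T})^{-1}F_3 = F_3^{T}\mathcal{M}F_3$.

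For $|\tG{5}|^{2}_E$ it suffices, since the metric is invariant, to prove $\tG{5}' = \tG{5}$. The transformation $F' = (\Lambda^{T})^{-1}F$ induces the same doublet rule on the potentials $(B_2, C_2)$, consistent with $H_3 = dB_2$ and $G_3 = dC_2$. I would expand $-\tfrac{1}{2}H_3'\wedge C_2' + \tfrac{1}{2}G_3'\wedge B_2'$ in the basis $\{H_3\wedge B_2,\ H_3\wedge C_2,\ G_3\wedge B_2,\ G_3\wedge C_2\}$, observe that the first and last coefficients vanish identically (for any $\Lambda$), while the middle two collect to $-\tfrac{1}{2}(ad-bc)$ and $\tfrac{1}{2}(ad-bc)$ respectively; combined with $\det\Lambda = 1$ and $G_5' = dC_4' = G_5$ this yields $\tG{5}' = \tG{5}$. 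For the Chern-Simons term, the key algebraic fact is that for two $3$-forms $F^{1}, F^{2}$ one has $F^{i}\wedge F^{i} = 0$ and $F^{2}\wedge F^{1} = -F^{1}\wedge F^{2}$, so under a linear substitution $F'^{\,i} = M^{i}{}_{j}F^{j}$ one gets $F'^{\,1}\wedge F'^{\,2} = (\det M)\,F^{1}\wedge F^{2}$; with $M = (\Lambda^{T})^{-1}$, hence $\det M = 1$, and together with $C_4' = C_4$, the integrand $C_4\wedge F_3^{1}\wedge F_3^{2}$ is fixed.

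The main obstacle is the bookkeeping in the $\tG{5}$ calculation: expanding and collecting the four cross wedge products, confirming that the two pairings outside the defining formula for $\tG{5}$ cancel independently of $\det\Lambda$, and that the remaining two conspire to reproduce exactly the factor $ad-bc$. Once this identity is in place, the remaining checks are pure linear algebra and invariance of $S$ follows by summing the contributions.
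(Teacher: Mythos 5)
Your proof is correct and follows essentially the same term-by-term strategy as the paper: the M\"obius identities for $\mathrm{Im}\,\tau'$ and $d\tau'$ handle the axion--dilaton kinetic term, the telescoping $F_3^T\mathcal{M}F_3$ substitution handles the three-form kinetic term (valid even under $GL(2,\mathbb{R})$, as the paper notes), and the determinant argument with $C_4'=C_4$ handles the Chern--Simons term. The only difference is that you also verify $\tG{5}'=\tG{5}$ explicitly via the doublet rule on the potentials (your $(ad-bc)$ bookkeeping is correct), a point the paper's proof leaves implicit, so this is added detail rather than a different route.
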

\begin{proof}
It is easy to see that $\mathcal{M}_{ij}\langle F_3^i,F_3^j\rangle_E$ is invariant (under $GL(2,\mathbb{R})$) and that the Chern-Simons term is invariant under $SL(2,\mathbb{R})$. Moreover, we have
\begin{align*}
\mathrm{Im}\,\tau'&=\frac{e^{-\phi}}{|c\tau+d|^2}\\
d\tau'&=\frac{1}{(c\tau+d)^2}d\tau
\end{align*}
for $\Lambda\in SL(2,\mathbb{R})$. This implies that the term 
\begin{equation*}
\frac{1}{2(\mathrm{Im}\,\tau)^2}\langle d\tau,d\tau'^*\rangle_E
\end{equation*}
is invariant.
\end{proof}
\begin{thm}
The bosonic equations of motion for the fields in the symmetric notation are:
\begin{enumerate}
\item {\bf Einstein equation in Ricci form:}
\begin{greybox}
\begin{align*}
\mathrm{Ric}_E(X,Y)&=P(X)P^*(Y)+P(Y)P^*(X)\\
&\quad +\frac{1}{4}\left(\langle i_XG_3',i_YG_3'^*\rangle_E+\langle i_YG'_3,i_XG_3'^*\rangle_E\right)\\
&\quad-\frac{1}{8}g(X,Y)\langle G_3',G_3'^*\rangle_E\\
&\quad +\frac{1}{4}\langle i_X\tG{5},i_Y\tG{5}\rangle_E.
\end{align*}
\end{greybox}
\item {\bf Maxwell equations:}
\begin{greybox}
\begin{align*}
D_2*_EP&=-\frac{1}{4}G_3'\wedge *_EG_3'\\
D_1*_EG_3'&=P\wedge *_EG_3'^*-iG_3'\wedge \tG{5}.
\end{align*}
\end{greybox}
The second Maxwell equation is sometimes written in the equivalent form
\begin{greybox}
\begin{equation*}
D_1*_EG_3'=P\wedge *_EG_3'^*-i\tG{5}\wedge *G_3'.
\end{equation*}
\end{greybox}
\item We add the {\bf self-duality equation:}
\begin{greybox}
\begin{equation*}
*\tG{5}=\tG{5}.
\end{equation*}
\end{greybox}
\item The {\bf Bianchi identities} are:
\begin{greybox}
\begin{align*}
D_2P&=0\\
D_1G_3'&=-P\wedge G_3'^*\\
d\tG{5}&=\frac{i}{2}G_3'\wedge G_3'^*.
\end{align*}
\end{greybox}
\end{enumerate}
\end{thm}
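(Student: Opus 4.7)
The plan is to translate directly between the two formulations: substitute the definitions of $P$, $G_3'$, and $Q$ into each equation of the symmetric notation, split into real and imaginary parts, and check that the result matches the Einstein-frame field equations of Theorem~\ref{thm:Einstein IIB}. The key preliminary observation is that
\[
G_3' = -e^{-\phi/2}H_3 - ie^{\phi/2}\tG{3}, \qquad P = \tfrac{1}{2}d\phi + \tfrac{i}{2}e^{\phi}G_1,
\]
so that $P$ and $G_3'$ are complex repackagings of the pairs $(d\phi, G_1)$ and $(H_3,\tG{3})$ weighted by appropriate dilaton factors.

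For the Einstein equation, I would compute the symmetric bilinears on the right-hand side. In $P(X)P^*(Y)+P(Y)P^*(X)$ the mixed $d\phi$--$G_1$ cross terms are purely imaginary and therefore cancel, leaving $\tfrac{1}{2}d\phi(X)d\phi(Y)+\tfrac{1}{2}e^{2\phi}G_1(X)G_1(Y)$. Similarly the symmetrization of $\langle i_XG_3',i_YG_3'^*\rangle_E$ kills the $H_3$--$\tG{3}$ cross terms and produces $\tfrac{1}{2}e^{-\phi}\langle i_XH_3,i_YH_3\rangle_E+\tfrac{1}{2}e^{\phi}\langle i_X\tG{3},i_Y\tG{3}\rangle_E$, while $\langle G_3',G_3'^*\rangle_E=e^{-\phi}|H_3|_E^2+e^{\phi}|\tG{3}|_E^2$ delivers the trace piece with the correct coefficient $-\tfrac{1}{8}g_E(X,Y)$. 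Adding the unchanged $\tG{5}$ term reproduces the Einstein equation of Theorem~\ref{thm:Einstein IIB} on the nose.

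For the Bianchi identities, I would first compute $dP=\tfrac{i}{2}e^{\phi}d\phi\wedge G_1$ using $dG_1=0$ and $d^2\phi=0$, and then check that $-2iQ\wedge P=\tfrac{i}{2}e^{\phi}G_1\wedge d\phi$ via $G_1\wedge G_1=0$; the two terms cancel, giving $D_2P=0$. For $D_1G_3'=-P\wedge G_3'^*$, expand $dG_3'$ using $dH_3=0$ and $d\tG{3}=H_3\wedge G_1$, and subtract $iQ\wedge G_3'$; the $d\phi\wedge H_3$ and $d\phi\wedge\tG{3}$ pieces coming from differentiating the prefactors $e^{\mp\phi/2}$ combine with the $G_1$ terms to reproduce $-P\wedge G_3'^*$. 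The last Bianchi $d\tG{5}=\tfrac{i}{2}G_3'\wedge G_3'^*$ follows from the original $d\tG{5}=H_3\wedge\tG{3}$ once one expands $G_3'\wedge G_3'^*=-2iH_3\wedge\tG{3}$ (the $H_3\wedge H_3$ and $\tG{3}\wedge\tG{3}$ pieces vanish by antisymmetry of the wedge product of two $3$-forms).

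The Maxwell equations are where the bookkeeping is heaviest. For $D_2*_EP=-\tfrac{1}{4}G_3'\wedge *_EG_3'$, I would separate real and imaginary parts: the real part produces $\tfrac{1}{2}(\Delta_E\phi)\,\mathrm{dvol}_{g_E}-\tfrac{1}{2}e^{2\phi}|G_1|_E^2\,\mathrm{dvol}_{g_E}$ on the left and $-\tfrac{1}{4}(e^{-\phi}|H_3|_E^2-e^{\phi}|\tG{3}|_E^2)\,\mathrm{dvol}_{g_E}$ on the right, which is exactly the dilaton equation of Theorem~\ref{thm:Einstein IIB}; the imaginary part, after using $d(e^{\phi}*_EG_1)=-e^{\phi}d\phi\wedge *_EG_1-H_3\wedge *_E\tG{3}$ (derived from $d(e^{2\phi}*_EG_1)=-e^{\phi}H_3\wedge *_E\tG{3}$), yields the $G_1$ Maxwell equation. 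The equation $D_1*_EG_3'=P\wedge *_EG_3'^*-iG_3'\wedge\tG{5}$ is treated analogously, using the original Maxwell equations for $H_3$ and $\tG{3}$ together with the self-duality $*_E\tG{5}=\tG{5}$ to absorb the derivatives of the dilaton prefactors $e^{\pm\phi/2}$. The main obstacle is the careful tracking of signs and of $e^{\pm\phi/2}$ factors so that the real and imaginary parts on both sides recombine into the claimed complex Maxwell equations.
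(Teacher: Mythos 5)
Your proposal is correct and follows essentially the same route as the paper: substitute the definitions of $P$, $G_3'$, $Q$, expand the bilinears (the cross terms cancelling as you say), and verify each symmetric-notation equation against the Einstein-frame dilaton, Maxwell and Bianchi equations of Theorem \ref{thm:Einstein IIB}, invoking self-duality of $\tG{5}$ for the second Maxwell equation. The only cosmetic difference is that you organize the Maxwell computations by real and imaginary parts, whereas the paper manipulates the complex expressions directly; the content is identical.
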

\begin{proof}
To prove the Einstein equation we calculate:
\begin{align*}
P(X)P^*(Y)+P(Y)P^*(X)&=\frac{1}{2}d\phi(X)d\phi(Y)+\frac{1}{2}e^{2\phi}G_1(X)G_1(Y)\\
\frac{1}{4}\left(\langle i_XG_3',i_YG_3'^*\rangle_E+\langle i_YG'_3,i_XG_3'^*\rangle_E\right)&=\frac{1}{2}\left(e^\phi\langle i_X\tG{3},i_Y\tG{3}\rangle_E+e^{-\phi}\langle i_XH_3,i_YH_3\rangle_E\right)\\
-\frac{1}{8}g(X,Y)\langle G_3',G_3'^*\rangle_E&=-\frac{1}{8}g(X,Y)\left(e^{\phi}|\tG{3}|_E^2+e^{-\phi}|H_3|_E^2\right).
\end{align*}
This proves the Einstein equation with Theorem \ref{thm:Einstein IIB}. To prove the Maxwell equations we calculate:
\begin{align*}
D_2*_EP&=d*_EP+ie^{\phi}G_1\wedge*_EP\\
&=\frac{i}{2}d\left(e^{-\phi}e^{2\phi}*_EG_1\right)+\frac{1}{2}d*_Ed\phi-\frac{1}{2}e^{2\phi}G_1\wedge*_EG_1+\frac{i}{2}e^{\phi}G_1\wedge*_Ed\phi\\
&=-\frac{i}{2}e^{\phi}d\phi\wedge*_EG_1-\frac{i}{2}H_3\wedge *_E\tG{3}+\frac{1}{2}\Delta_E\phi-\frac{1}{2}e^{2\phi}|G_1|_E^2+\frac{i}{2}G_1\wedge*_Ed\phi\\
&=-\frac{i}{2}H_3\wedge *_E\tG{3}-\frac{1}{4}e^{-\phi}|H_3|_E^2+\frac{1}{4}e^{\phi}|\tG{3}|_E^2\\
&=-\frac{1}{4}\left(-ie^{\phi/2}\tilde{G_3}-e^{-\phi/2}H_3\right)\wedge*_E\left(-ie^{\phi/2}\tilde{G_3}-e^{-\phi/2}H_3\right)\\
&=-\frac{1}{4}G_3'\wedge*_EG_3',
\end{align*}
where we used the dilaton equation and the Maxwell equation for $G_1$. Similarly,
\begin{align*}
D_1*_EG_3'&=d*_EG_3'+\frac{i}{2}e^{\phi}G_1\wedge*_EG_3'\\
&=-id\left(e^{-\phi/2}e^{\phi}*_E\tG{3}\right)-d\left(e^{\phi/2}e^{-\phi}*_EH_3\right)\\
&\quad +\frac{1}{2}e^{3\phi/2}G_1\wedge*_E\tG{3}-\frac{i}{2}e^{\phi/2}G_1\wedge *_EH_3\\
&=\frac{i}{2}e^{\phi/2}d\phi\wedge*_E\tG{3}+ie^{-\phi/2} H_3\wedge\tG{5}\\
&\quad -\frac{1}{2}e^{-\phi/2}d\phi\wedge*_EH_3 -e^{\phi/2}\tG{3}\wedge\tG{5}-e^{3\phi/2}G_1\wedge*_E\tG{3}\\
&\quad +\frac{1}{2}e^{3\phi/2}G_1\wedge*_E\tG{3}-\frac{i}{2}e^{\phi/2}G_1\wedge *_EH_3\\
&=ie^{\phi/2}P\wedge*_E\tG{3}-e^{-\phi/2}P\wedge*_EH_3-iG_3'\wedge\tG{5}\\
&=P\wedge*_EG_3'^*-iG_3'\wedge\tG{5}\\
&=P\wedge*_EG_3'^*-iG_3'\wedge*_E\tG{5}\\
&=P\wedge*_EG_3'^*-i\tG{5}\wedge*_EG_3',
\end{align*}
where we used the Maxwell equations for $\tG{3}$ and $H_3$ and the self-duality of $\tG{5}$.

The Bianchi identities are easy to check: The first one is equivalent to $dG_1=0$, the real and imaginary part of the second one are equivalent to $dH_3=0$ and $d\tG{3}=H_3\wedge G_1$, and the last one is equivalent to $d\tG{5}=H_3\wedge \tG{3}$.

\end{proof}

\section{Dimensional reduction from M-theory to IIA supergravity}

The action and Killing spinor equations of type IIA supergravity theory in dimension ten can be obtained from the eleven-dimensional supergravity theory by a process called {\em dimensional reduction}. In this section we follow the exposition in \cite{BBS} (see also \cite{Fig} for the mathematical idea behind the construction).

Let $N$ be a ten-dimensional manifold and $M$ the eleven-dimensional manifold $N\times S^1$, where $S^1$ is the circle. Our aim is to relate the eleven-dimensional supergravity on $M$ to the type IIA supergravity on $N$. We assume that we have the following fields on $N$:
\begin{itemize}
\item A Lorentz metric $g_N$.
\item A smooth function $\phi$.
\item A $1$-form $C_1$, a $2$-form $B_2$ and a $3$-form $C_3$ with field strengths 
\begin{align*}
G_2&=dC_1\\
H_3&=dB_2\\
G_4&=dC_3\\
\tG{4}&=G_4-C_1\wedge H_3.
\end{align*}
\end{itemize}
We also choose a metric $g_{S^1}$ on $S^1$, invariant under $S^1$-rotations, with volume form $\alpha^{10}$. The fields on $N$ define via pull-back under the projection $\pi_N$ fields on $M$ that are invariant under the canonical $S^1$-action. Similarly the metric and volume form on $S^1$ define fields on $M$ via pull-back under the projection $\pi_{S^1}$. We write $\pi_{S^1}^*\alpha^{10}=\alpha^{10}$.
\begin{defn}
Let $X$ be a tangent vector on $N$. Then we denote by $\bar{X}$ the tangent vectors on $M$, tangent to the $N$-factors, mapping under the differential ${\pi_N}_*$ to $X$. Similary a tangent vector $S$ to $S^1$ defines tangent vectors $\bar{S}$ on $M$, tangent to the $S^1$-fibres and mapping under ${\pi_{S^1}}_*$ to $S$.
\end{defn}

\begin{defn}
We have on $N\times S^1$ two different metrics: The Lorentzian product metric
\begin{equation*}
g_N\oplus g_{S^1}
\end{equation*}
and another metric $g_M$, defined as follows:
\begin{align*}
g_M(\bar{X},\bar{Y})&=e^{-2\phi/3}g_N(X,Y)+e^{4\phi/3}C_1(X)C_1(Y)\\
g_M(\bar{X},\bar{S})&=-e^{4\phi/3}C_1(X)\alpha^{10}(S)\\
g_M(\bar{S},\bar{S})&=e^{4\phi/3}g_{S^1}(S,S).
\end{align*}
Here $X,Y$ are tangent vectors on $N$ and $S$ is a tangent vector of $S^1$. 
\end{defn}
\begin{rem}\label{rem: sign BBS}
In \cite{BBS} the same definitions are used where the sign in front of $C_1$ in the definitions of $\tG{4}$ and $g_M(\bar{X},\bar{S})$ is the opposite one. We use the sign here so that the sign in the definition of $\tG{4}$ is the one we used before in Section \ref{sect: sugra IIA}.
\end{rem}
We also define a $3$-form $C$ on $M$ by
\begin{equation*}
C=\pi_N^*C_3+\pi_N^*B_2\wedge\alpha^{10}.
\end{equation*}
The field strength is 
\begin{equation*}
G=dC=\pi_N^*G_4+\pi_N^*H_3\wedge\alpha^{10}.
\end{equation*}
Note that the fields $g_M$, $C$ and $G$ are invariant under the action of $S^1$ on $M=N\times S^1$. We want to calculate the action of eleven-dimensional supergravity for the fields $g_M$ and $C$ on $M$ in terms of the fields $g_N$, $\phi$, $C_1,B_2$ and $C_3$ on $N$. 

\subsection{The Chern-Simons term} 
\begin{prop}
The Chern-Simons Lagrangian can be calculated as
\begin{equation*}
C\wedge G\wedge G=\pi_N^*\left(3B_2\wedge G_4\wedge G_4-dL  \right)\wedge\alpha^{10},
\end{equation*}
where $dL$ denotes the differential of a certain $9$-form $L$ on $N$.
\end{prop}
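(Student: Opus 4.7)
The computation is purely algebraic/combinatorial: expand $C\wedge G\wedge G$ using the explicit forms of $C$ and $G$, exploit $\alpha^{10}\wedge\alpha^{10}=0$ and the fact that $N$ is $10$-dimensional so any $11$-form pulled back from $N$ vanishes, and then identify a primitive $L$ for the obstruction term.

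\textbf{Step 1 — expand $G\wedge G$.} Writing $G=\pi_N^*G_4+\pi_N^*H_3\wedge\alpha^{10}$ and using $\alpha^{10}\wedge\alpha^{10}=0$, the cross-term between $\pi_N^*H_3\wedge\alpha^{10}$ and itself drops, so I get (pulling back suppressed)
\begin{equation*}
G\wedge G=G_4\wedge G_4+2\,G_4\wedge H_3\wedge\alpha^{10}.
\end{equation*}
Note $G_4\wedge H_3\wedge\alpha^{10}=H_3\wedge G_4\wedge\alpha^{10}$ because $G_4$ and $H_3$ are even/odd but paired so a straightforward sign tracking gives what I need.

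\textbf{Step 2 — wedge with $C$.} Using $C=\pi_N^*C_3+\pi_N^*B_2\wedge\alpha^{10}$ and multiplying out, four terms appear. The term $C_3\wedge G_4\wedge G_4$ is the pull-back of an $11$-form from a $10$-dimensional manifold, hence zero. The term $B_2\wedge\alpha^{10}\wedge G_4\wedge H_3\wedge\alpha^{10}$ vanishes by $\alpha^{10}\wedge\alpha^{10}=0$. Commuting $\alpha^{10}$ to the right (each jump past the even-degree forms $G_4$ or $G_4\wedge G_4$ costs no sign) leaves
\begin{equation*}
C\wedge G\wedge G=\pi_N^*\bigl(B_2\wedge G_4\wedge G_4+2\,C_3\wedge G_4\wedge H_3\bigr)\wedge\alpha^{10}.
\end{equation*}

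\textbf{Step 3 — produce the primitive $L$.} I would guess $L=2\,C_3\wedge G_4\wedge B_2$, a $9$-form on $N$. Using $dG_4=d(dC_3)=0$ and $dB_2=H_3$ together with the graded Leibniz rule,
\begin{equation*}
dL=2\,dC_3\wedge G_4\wedge B_2-2\,C_3\wedge G_4\wedge dB_2=2\,G_4\wedge G_4\wedge B_2-2\,C_3\wedge G_4\wedge H_3.
\end{equation*}
Therefore
\begin{equation*}
3B_2\wedge G_4\wedge G_4-dL=B_2\wedge G_4\wedge G_4+2\,C_3\wedge G_4\wedge H_3,
\end{equation*}
which matches the bracket produced in Step 2, completing the identification.

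\textbf{Remark on obstacles.} There is no conceptual obstacle here; the only hazard is bookkeeping of Koszul signs when sliding $\alpha^{10}$ past the various pulled-back forms and when applying the graded Leibniz rule to compute $dL$. The degree-count argument that $\pi_N^*$ of any form of degree $>10$ vanishes does the heavy lifting of killing the two a~priori awkward terms.
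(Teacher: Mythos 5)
Your proof is correct and takes essentially the same route as the paper: expand $C\wedge G\wedge G$, discard the two terms via the degree count on the $10$-dimensional base and $\alpha^{10}\wedge\alpha^{10}=0$, and then integrate by parts with $L=2\,C_3\wedge G_4\wedge B_2$ (the paper phrases this as $C_3\wedge G_4\wedge H_3=-d(C_3\wedge G_4\wedge B_2)+G_4\wedge G_4\wedge B_2$, which is the identical computation). All signs check out, so there is nothing to add.
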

\begin{proof}
We have 
\begin{equation*}
G\wedge G=\pi_N^*(G_4\wedge G_4)+2\pi_N^*(G_4\wedge H_3)\wedge \alpha^{10}
\end{equation*}
and
\begin{equation*}
C\wedge G\wedge G=\pi_N^*(C_3\wedge G_4\wedge G_4)+2\pi_N^*(C_3\wedge G_4\wedge H_3)\wedge\alpha^{10}+\pi_N^*(B_2\wedge G_4\wedge G_4)\wedge\alpha^{10}.
\end{equation*}
The first term on the right is the pull-back of an $11$-form on a $10$-manifold and vanishes. We write
\begin{align*}
C_3\wedge G_4\wedge H_3&=C_3\wedge G_4\wedge dB_2\\
&=-d(C_3\wedge G_4\wedge B_2)+G_4\wedge G_4\wedge B_2.
\end{align*}
This implies the claim.
\end{proof}
\begin{cor}
The Chern-Simons term in the action of eleven-dimensional supergravity on $M=N\times S^1$ is given by
\begin{equation*}
-\frac{1}{12\kappa_{11}^2}\int_MC\wedge G\wedge G=-\frac{1}{4\kappa_{10}^2}\int_NB_2\wedge G_4\wedge G_4.
\end{equation*}
\end{cor}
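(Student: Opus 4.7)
The plan is to apply the preceding proposition and then use the product structure of $M = N \times S^1$ to reduce the eleven-dimensional integral to a ten-dimensional one.

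First, I would substitute the formula from the proposition,
\begin{equation*}
C \wedge G \wedge G = \pi_N^*\!\left(3 B_2 \wedge G_4 \wedge G_4 - dL\right) \wedge \alpha^{10},
\end{equation*}
into the left-hand side of the corollary. Because $\alpha^{10} = \pi_{S^1}^*\alpha^{10}$ is the pull-back of the volume form on $S^1$ and $3B_2\wedge G_4\wedge G_4 - dL$ is a $10$-form on the $10$-manifold $N$, the integrand over $M$ has precisely the product form to which a Fubini-type argument applies.

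Second, I would use that for any top-degree form $\omega \in \Omega^{10}(N)$ one has
\begin{equation*}
\int_M \pi_N^*\omega \wedge \alpha^{10} \;=\; \left(\int_{S^1}\alpha^{10}\right)\int_N \omega,
\end{equation*}
which is immediate from the definition of the product orientation and the fact that $\alpha^{10}$ is invariant under $S^1$-rotations. Applied to our situation this yields
\begin{equation*}
\int_M C\wedge G\wedge G \;=\; \left(\int_{S^1}\alpha^{10}\right)\left(3\int_N B_2\wedge G_4\wedge G_4 - \int_N dL\right).
\end{equation*}
The term $\int_N dL$ vanishes: either $N$ is taken to be closed and Stokes' theorem applies directly, or—as in the variational set-up used throughout the paper—$dL$ is a total differential and is discarded when contributing to the action.

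Third, I would collect the constants. The factor $3$ pulled out of the bracket combines with the prefactor $-\tfrac{1}{12\kappa_{11}^2}$ to produce the desired $-\tfrac{1}{4}$, and the ratio of gravitational couplings is fixed by the standard Kaluza–Klein relation
\begin{equation*}
\frac{1}{\kappa_{10}^2} \;=\; \frac{1}{\kappa_{11}^2}\int_{S^1}\alpha^{10},
\end{equation*}
which is exactly how $\kappa_{10}$ is defined in the dimensional reduction from $M$-theory to type IIA (or, equivalently, the $S^1$ is normalized so that $\int_{S^1}\alpha^{10}=\kappa_{11}^2/\kappa_{10}^2$). Combining these gives the claimed identity. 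The only conceptual point worth flagging is this last step: the coefficient matching hinges on the defining relation between $\kappa_{10}$ and $\kappa_{11}$, so there is no genuine mathematical obstacle — everything else is Fubini and Stokes, and the nontrivial work has already been done in the proposition.
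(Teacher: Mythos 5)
Your proposal is correct and follows essentially the same route the paper takes implicitly: substitute the proposition's identity $C\wedge G\wedge G=\pi_N^*\bigl(3B_2\wedge G_4\wedge G_4-dL\bigr)\wedge\alpha^{10}$, integrate out the $S^1$ factor, discard the exact term $dL$, and absorb $\int_{S^1}\alpha^{10}$ via the defining relation $\frac{1}{\kappa_{10}^2}=\frac{1}{\kappa_{11}^2}\int_{S^1}\alpha^{10}$, so that $3\cdot\frac{1}{12}=\frac{1}{4}$ gives the stated coefficient. No gaps; the only cosmetic difference is that the paper states the $\kappa_{10}$ definition a bit later, in the Einstein-term subsection, but it is exactly the relation you invoke.
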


\subsection{The Einstein term}
We want to calculate the Levi-Civita connection and the curvature of the metric $g_M$. We use the notion of Riemannian submersions \cite{ONeill}. From now on we denote by $\bar{S}=e_{10}$ the vector field on $M$ along $S^1$ with
\begin{equation*}
\alpha^{10}(S)=1.
\end{equation*}
We have
\begin{equation*}
g_M(\bar{S},\bar{S})=e^{4\phi/3}.
\end{equation*}
\begin{defn} The vertical subspace $\mV_p\subset T_pM$ is the subspace tangent to the $S^1$-fibre in $M=N\times S^1$, spanned by $\bar{S}$. The horizontal subspace $\mH_p\subset T_pM$ is the $g_M$-orthogonal complement of the vertical subspace. If $W$ is a tangent vector on $M$, then we denote its projection onto the horizontal and vertical subspaces by $W^\mH$ and $W^\mV$.
\end{defn}
A direct calculation shows:
\begin{prop}\label{prop:horizontal projection} Let $X$ and $Y$ be tangent vectors on $N$.
\begin{enumerate}
\item The projection $\bar{X}^\mH$ of the vector $\bar{X}$ onto the horizontal subspace is given by
\begin{equation*}
\bar{X}^\mH=\bar{X}+C_1(X)\bar{S}.
\end{equation*}
\item For the horizontal projections $\bar{X}^\mH$ and $\bar{Y}^\mH$ we have
\begin{equation*}
g_M(\bar{X}^\mH,\bar{Y}^\mH)=e^{-2\phi/3}g_N(X,Y).
\end{equation*}
\item We have $\pi_{N*}\bar{X}^\mH=X$.
\end{enumerate}
\end{prop}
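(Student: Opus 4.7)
The plan is to verify the three parts of the proposition by direct computation using the definition of $g_M$ and the characterization of the horizontal subspace as the $g_M$-orthogonal complement of $\mV = \mathrm{span}(\bar{S})$.

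For part (a), I would write $\bar{X} = \bar{X}^\mH + \lambda \bar{S}$ for some scalar $\lambda$, and determine $\lambda$ by imposing $g_M(\bar{X}^\mH, \bar{S}) = 0$. Taking $g_M$-inner product with $\bar{S}$ gives
\begin{equation*}
0 = g_M(\bar{X}, \bar{S}) - \lambda\, g_M(\bar{S}, \bar{S}) = -e^{4\phi/3} C_1(X) - \lambda e^{4\phi/3},
\end{equation*}
since $\alpha^{10}(S) = 1$. Hence $\lambda = -C_1(X)$ and $\bar{X}^\mH = \bar{X} + C_1(X)\bar{S}$.

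For part (b), I would expand $g_M(\bar{X}^\mH, \bar{Y}^\mH)$ bilinearly using the formula from (a) and the three cases in the definition of $g_M$. The cross terms contribute $-2e^{4\phi/3}C_1(X)C_1(Y)$, the $\bar{S},\bar{S}$-term contributes $+e^{4\phi/3}C_1(X)C_1(Y)$, and the $\bar{X},\bar{Y}$-term contributes $e^{-2\phi/3}g_N(X,Y) + e^{4\phi/3}C_1(X)C_1(Y)$; summing, the $C_1$-contributions cancel and one is left with $e^{-2\phi/3}g_N(X,Y)$.

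For part (c), apply $\pi_{N*}$ to the formula in (a). By the defining property of $\bar{X}$, $\pi_{N*}\bar{X} = X$; and since $\bar{S}$ is tangent to the $S^1$-fibres, $\pi_{N*}\bar{S} = 0$. Thus $\pi_{N*}\bar{X}^\mH = X$. None of the three parts presents a genuine obstacle—everything follows once the orthogonality condition in (a) is set up correctly; the only thing to watch is getting the signs right in the $g_M(\bar{X},\bar{S})$ term and making sure the cancellation in (b) is tracked accurately.
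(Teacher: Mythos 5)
Your proposal is correct and is exactly the ``direct calculation'' that the paper invokes without writing out: determine the vertical component of $\bar{X}$ from the orthogonality condition $g_M(\bar{X}^\mH,\bar{S})=0$ using $\alpha^{10}(S)=1$, expand $g_M(\bar{X}^\mH,\bar{Y}^\mH)$ bilinearly (the $C_1$-terms indeed cancel as $-2+1+1=0$), and push forward with $\pi_{N*}$ killing $\bar{S}$. No gaps; the signs are handled correctly.
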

We thus have two structures on the manifold $M$:
\begin{itemize}
\item The tangent space to the fibres $S^1$ and the tangent space to the $N$-factors determined by the product $M=N\times S^1$, orthogonal with respect to the metric $g_N\oplus g_{S^1}$.
\item The tangent space to the fibres $S^1$ and the orthogonal complement $\mH$ with respect to the metric $g_M$.
\end{itemize}
\begin{defn}
We define a metric
\begin{equation*}
g_N^\phi=e^{-2\phi/3}g_N.
\end{equation*}
\end{defn}
\begin{cor}
The projection $\pi_N$ is an isometry of the horizontal space $\mH_p$ with the metric $g_M$ onto the tangent space $T_pN$ with the metric $g_N^\phi$. Hence $\pi_N$ is by definition a Riemannian submersion between $(M,g_M)$ and $(N,g_N^\phi)$.
\end{cor}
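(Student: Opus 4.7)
The plan is to read off the corollary essentially from Proposition \ref{prop:horizontal projection}, combined with the definition $g_N^\phi = e^{-2\phi/3}g_N$. Nothing deep is required; the task is just to verify that the restriction $\pi_{N*}|_{\mathcal{H}_p}$ is a linear isomorphism onto $T_{\pi_N(p)}N$ that preserves the chosen inner products.

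First I would note that $\mathcal{H}_p$ and $T_{\pi_N(p)}N$ both have dimension $10$, since $\dim M = 11$ and the vertical space $\mathcal{V}_p$ is spanned by $\bar{S}$, hence one-dimensional. Next, surjectivity of $\pi_{N*}|_{\mathcal{H}_p}$ follows from Proposition \ref{prop:horizontal projection}(c): any $X \in T_{\pi_N(p)}N$ arises as $X = \pi_{N*}\bar{X}^{\mathcal{H}}$. By dimension count, $\pi_{N*}|_{\mathcal{H}_p}$ is then a linear isomorphism.

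The key identity is then the chain
\begin{equation*}
g_N^\phi\bigl(\pi_{N*}\bar{X}^{\mathcal{H}},\pi_{N*}\bar{Y}^{\mathcal{H}}\bigr) = g_N^\phi(X,Y) = e^{-2\phi/3}g_N(X,Y) = g_M(\bar{X}^{\mathcal{H}},\bar{Y}^{\mathcal{H}}),
\end{equation*}
where the first equality uses Proposition \ref{prop:horizontal projection}(c), the second is the definition of $g_N^\phi$, and the third is Proposition \ref{prop:horizontal projection}(b). This shows that $\pi_{N*}|_{\mathcal{H}_p}$ is an isometry between $(\mathcal{H}_p, g_M|_{\mathcal{H}_p})$ and $(T_{\pi_N(p)}N, g_N^\phi)$.

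Finally, since $\pi_N\colon M = N\times S^1 \to N$ is a smooth surjective submersion and, by the previous step, its differential restricts to a linear isometry on the horizontal subspace with respect to $g_M$ and $g_N^\phi$, we conclude by definition that $\pi_N$ is a Riemannian submersion from $(M,g_M)$ to $(N,g_N^\phi)$. There is no real obstacle here; the content is entirely packaged in the earlier proposition, and this corollary simply records the geometric reinterpretation so that the O'Neill formulas can be invoked in the subsequent curvature computation.
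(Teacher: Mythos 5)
Your proposal is correct and follows the same route the paper intends: the corollary is a direct repackaging of Proposition \ref{prop:horizontal projection} (parts (b) and (c)) together with the definition $g_N^\phi=e^{-2\phi/3}g_N$, which is why the paper states it without a separate proof. Your explicit dimension count and surjectivity remark merely spell out the details the paper leaves implicit.
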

We now begin with the calculation of the Levi-Civita connection and the curvature of the metric $g_M$. 
\begin{defn}
We denote the Levi-Civita connection of $g_M$ by $\nabla^M$, of $g_N$ by $\nabla^N$ and of $g_N^\phi$ by $\nabla^{N\phi}$.
\end{defn}
\begin{lem}\label{lem:commutat H proj S}
We have
\begin{align*}
\left[\bar{X}^\mH,\bar{Y}^\mH\right]^\mV&=G_2(X,Y)\bar{S}\\
\left[\bar{X}^\mH,\bar{S}\right]&=0.
\end{align*}
\end{lem}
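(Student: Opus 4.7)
The proof is a direct Lie bracket computation using the formula $\bar{X}^\mH=\bar{X}+C_1(X)\bar{S}$ from Proposition~\ref{prop:horizontal projection}, combined with the two basic observations that will do the work. First, because $M=N\times S^1$ is a product, lifts of vector fields from the two factors commute: $[\bar{X},\bar{S}]=0$ and $[\bar{X},\bar{Y}]=\overline{[X,Y]}$. Second, any function pulled back from $N$ (in particular $C_1(X)$, viewed as a function on $M$ via $\pi_N$) is constant along the $S^1$-fibres, so $\bar{S}\bigl(C_1(X)\bigr)=0$ and moreover $\bar{X}\bigl(C_1(Y)\bigr)=X\bigl(C_1(Y)\bigr)$, interpreted as a function on $N$ pulled back to $M$.

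For the second identity I would simply expand
\[
[\bar{X}^\mH,\bar{S}]=[\bar{X}+C_1(X)\bar{S},\,\bar{S}]=[\bar{X},\bar{S}]+C_1(X)[\bar{S},\bar{S}]-\bar{S}\bigl(C_1(X)\bigr)\bar{S},
\]
and each of the three terms vanishes by the two observations above.

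For the first identity I would expand $[\bar{X}^\mH,\bar{Y}^\mH]$ bilinearly, using the Leibniz rule $[f U, V]=f[U,V]-V(f)U$ on the four resulting pieces. The piece $[\bar{X},\bar{Y}]=\overline{[X,Y]}$ is horizontal-plus-vertical according to the decomposition $\overline{[X,Y]}=\overline{[X,Y]}^\mH-C_1([X,Y])\bar{S}$; the cross pieces give $\bigl(\bar{X}(C_1(Y))-\bar{Y}(C_1(X))\bigr)\bar{S}$ (all other summands drop out because $[\bar{X},\bar{S}]=0$ and $\bar{S}$ annihilates pulled-back functions); and the last piece $[C_1(X)\bar{S},C_1(Y)\bar{S}]$ vanishes entirely for the same reason. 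Projecting onto $\mV$ thus leaves
\[
[\bar{X}^\mH,\bar{Y}^\mH]^\mV=\bigl(X(C_1(Y))-Y(C_1(X))-C_1([X,Y])\bigr)\bar{S}.
\]
I would finish by recognising the quantity in parentheses as the Cartan formula for $dC_1(X,Y)=G_2(X,Y)$.

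The computation is essentially routine; the only point that requires a bit of care is the bookkeeping of what is a vector field on $N$, what is its lift to $M$, and the fact that $C_1(X)$ appears both as a function on $N$ and, via $\pi_N$, as a function on $M$ which is $\bar{S}$-invariant. Once this is kept straight, the vertical part of the bracket unambiguously reproduces $dC_1$ through the coordinate-free formula for the exterior derivative, and there is no further obstacle.
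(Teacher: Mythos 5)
Your proof is correct and follows essentially the same route as the paper: expand $\bar{X}^\mH=\bar{X}+C_1(X)\bar{S}$, use that lifts from the two factors commute and that pulled-back functions are $\bar{S}$-invariant, and identify the resulting vertical coefficient $X(C_1(Y))-Y(C_1(X))-C_1([X,Y])$ with $dC_1(X,Y)=G_2(X,Y)$ via the Cartan formula. The only cosmetic difference is that the paper extracts the vertical part by pairing with $\bar{S}$ against $g_M$ (dividing by $g_M(\bar{S},\bar{S})=e^{4\phi/3}$), whereas you read it off directly from the horizontal--vertical decomposition, which is equally valid.
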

\begin{proof}
We can write
\begin{equation*}
\left[\bar{X}^\mH,\bar{Y}^\mH\right]^\mV=e^{-4\phi/3}g_M\left(\left[\bar{X}^\mH,\bar{Y}^\mH\right], \bar{S}\right)\bar{S}
\end{equation*}
and
\begin{align*}
g_M\left(\left[\bar{X}^\mH,\bar{Y}^\mH\right], \bar{S}\right)&=g_M\left(\overline{[X,Y]}^\mH-C_1([X,Y])\bar{S}+L_XC_1(Y)\bar{S}-L_YC_1(X)\bar{S},\bar{S}\right)\\
&=G_2(X,Y)e^{4\phi/3}.
\end{align*}
Hence the first claim follows. The second claim is clear.
\end{proof}
\begin{defn}
For a $1$-form $\tau$ on $N$ we define a vector field $\tau^\sharp$ on $N$ by 
\begin{equation*}
g_N(\tau^\sharp,Y)=\tau(Y).
\end{equation*}
This implies
\begin{equation*}
g_M\left(\overline{\tau^\sharp}^\mH,Y^\mH\right)=e^{-2\phi/3}\tau(Y).
\end{equation*}
\end{defn}
Using the Koszul formula we get:
\begin{prop}\label{prop:Koszul M conn} The Levi-Civita connection $\nabla^M$ is given by:
\begin{align*}
\nabla^M_{\bar{S}}\bar{S}&=-\frac{2}{3}e^{2\phi}\overline{\left(d\phi\right)^\sharp}^\mH\\
\nabla^M_{\bar{X}^\mH}\bar{S}&=\nabla^M_{\bar{S}}\bar{X}^\mH=-\frac{1}{2}e^{2\phi}\overline{(i_XG_2)^\sharp}^\mH+\frac{2}{3}d\phi(X)\bar{S}\\
\nabla^M_{\bar{X}^\mH}\bar{Y}^\mH&=\overline{\left(\nabla^{N\phi}_XY\right)}^\mH+\frac{1}{2}G_2(X,Y)\bar{S}.
\end{align*}
\end{prop}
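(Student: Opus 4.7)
The plan is to apply the Koszul formula for the Levi-Civita connection $\nabla^M$, using three inputs: (i) the decomposition of every tangent vector of $M$ as horizontal plus vertical with the vertical space spanned by $\bar S$; (ii) the metric identities $g_M(\bar X^\mH,\bar Y^\mH)=e^{-2\phi/3}g_N(X,Y)$, $g_M(\bar S,\bar S)=e^{4\phi/3}$ and $g_M(\bar X^\mH,\bar S)=0$ from Proposition \ref{prop:horizontal projection}; and (iii) the bracket relations $[\bar X^\mH,\bar Y^\mH]^\mV=G_2(X,Y)\bar S$ and $[\bar X^\mH,\bar S]=0$ from Lemma \ref{lem:commutat H proj S}. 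Since horizontal vectors and $\bar S$ span $TM$, it is enough to test each covariant derivative against $\bar W^\mH$ (with $W$ an arbitrary vector field on $N$) and against $\bar S$, then convert the resulting $g_M$-inner products into $g_N$-duals.

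First I would handle $\nabla^M_{\bar S}\bar S$. By $S^1$-invariance $g_M(\bar S,\bar S)$ does not change along $\bar S$, and the orthogonality and $[\bar S,\bar W^\mH]=0$ kill most Koszul terms, leaving a single term proportional to $\bar W^\mH g_M(\bar S,\bar S)$, which via $\bar W^\mH(\phi)=d\phi(W)$ gives the claimed horizontal expression; the vertical component vanishes because $\bar S(\phi)=0$. For $\nabla^M_{\bar X^\mH}\bar S$ I would argue similarly: against $\bar Y^\mH$ the only surviving Koszul term is the one involving $g_M([\bar X^\mH,\bar Y^\mH],\bar S)$, which by Lemma \ref{lem:commutat H proj S} produces $G_2(X,Y)e^{4\phi/3}$ and hence the $-\tfrac12 e^{2\phi}\overline{(i_XG_2)^\sharp}^\mH$ part; against $\bar S$ the only surviving term comes from $\bar X^\mH g_M(\bar S,\bar S)$ and yields the $\tfrac23 d\phi(X)\bar S$ part. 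The identity $\nabla^M_{\bar S}\bar X^\mH=\nabla^M_{\bar X^\mH}\bar S$ is then immediate from torsion-freeness combined with $[\bar X^\mH,\bar S]=0$.

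For the last formula I would invoke the O'Neill formulas for the Riemannian submersion $\pi_N\colon(M,g_M)\to(N,g_N^\phi)$ noted in the corollary preceding the proposition: the horizontal part of $\nabla^M_{\bar X^\mH}\bar Y^\mH$ is the horizontal lift of $\nabla^{N\phi}_XY$, and the vertical part equals $\tfrac12[\bar X^\mH,\bar Y^\mH]^\mV=\tfrac12 G_2(X,Y)\bar S$ by Lemma \ref{lem:commutat H proj S}. Alternatively one can recover this directly from Koszul, using $\bar S g_M(\bar X^\mH,\bar Y^\mH)=0$ to drop the $\bar S$-derivative term.

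The only real obstacle is bookkeeping with two simultaneous identifications: horizontal norms in $g_M$ carry a conformal factor $e^{-2\phi/3}$, while the musical isomorphism $\sharp$ in the statement is taken with respect to $g_N$. Converting a $g_M$-inner product with a horizontal vector into the $g_N$-dual form is what produces the $e^{2\phi}$ prefactors; once one consistently rewrites $g_M(\overline V^\mH,\bar W^\mH)=e^{-2\phi/3}g_N(V,W)$, the identifications come out exactly as stated.
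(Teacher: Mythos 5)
Your proposal is correct and follows essentially the same route as the paper, which simply invokes the Koszul formula together with Proposition \ref{prop:horizontal projection} and Lemma \ref{lem:commutat H proj S}; your bookkeeping of the conformal factors (e.g.\ $e^{4\phi/3}=e^{2\phi}e^{-2\phi/3}$ converting $g_M$-pairings into $g_N$-duals) reproduces the stated coefficients exactly. Invoking the standard Riemannian-submersion facts for the horizontal and vertical parts of $\nabla^M_{\bar X^\mH}\bar Y^\mH$ is a harmless shortcut, fully in the spirit of the O'Neill machinery the paper sets up immediately afterwards, and as you note the direct Koszul computation gives the same result.
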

It is useful for calculations to choose an orthonormal basis for $g_M$.
\begin{lem}\label{lem:ONB for M and N} Let $e_0,e_1,\ldots,e_9$ be an orthonormal basis of $T_xN$ with respect to $g_N$ (with $e_0$ timelike) and $e_{10}=S$ as before the vector field along $S^1$ with $\alpha^{10}(e_{10})=1$. Then the following defines an orthonormal basis in every point $(x,\theta)$ of $M$ with $\theta\in S^1$ with respect to the metric $g_M$:
\begin{align*}
e'_a&=e^{\phi/3}\bar{e}^\mH_a=e^{\phi/3}(\bar{e}_a+{C_1}_a\bar{e}_{10})\quad\forall 0\leq a\leq 9\\
e'_{10}&=e^{-2\phi/3}\bar{e}_{10}.
\end{align*}
This implies
\begin{equation*}
\mathrm{dvol}_{g_M}=e^{-8\phi/3}\mathrm{dvol}_{g_N}\wedge\alpha^{10}.
\end{equation*}
We also have
\begin{align*}
[e_a',e_{10}']&=-\frac{2}{3}e^{\phi/3}d\phi_ae_{10}'\\
[e_a',e_b']&=\frac{1}{3}e^{\phi/3}\left(d\phi_ae'_b-d\phi_be'_a\right)+e^{2\phi/3}\overline{[e_a,e_b]}^\mH+e^{4\phi/3}{G_2}_{ab}e'_{10}.
\end{align*}
\end{lem}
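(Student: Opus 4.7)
The plan is to verify each assertion by direct computation, relying on Proposition \ref{prop:horizontal projection}, Lemma \ref{lem:commutat H proj S}, and the Leibniz rule $[fX,gY]=fg[X,Y]+f(Xg)Y-g(Yf)X$. First, for orthonormality, Proposition \ref{prop:horizontal projection} gives $g_M(\bar{X}^\mH,\bar{Y}^\mH)=e^{-2\phi/3}g_N(X,Y)$, so $g_M(e'_a,e'_b)=e^{2\phi/3}\cdot e^{-2\phi/3}g_N(e_a,e_b)=\eta_{ab}$ for $0\le a,b\le 9$. For the vertical direction, $g_M(\bar{S},\bar{S})=e^{4\phi/3}$ yields $g_M(e'_{10},e'_{10})=1$, and orthogonality $g_M(e'_a,e'_{10})=0$ is immediate from the definition of $\mH$ as the $g_M$-orthogonal complement of the vertical subspace. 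For the volume form, the change-of-basis matrix from $(\bar{e}_0,\ldots,\bar{e}_{10})$ to $(e'_0,\ldots,e'_{10})$ is upper triangular with diagonal $e^{\phi/3},\ldots,e^{\phi/3},e^{-2\phi/3}$, so its determinant is $e^{8\phi/3}$. The dual $1$-form basis thus satisfies $(e')^0\wedge\cdots\wedge (e')^{10}=e^{-8\phi/3}\bar{e}^0\wedge\cdots\wedge\bar{e}^9\wedge\alpha^{10}=e^{-8\phi/3}\mathrm{dvol}_{g_N}\wedge\alpha^{10}$.

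For the commutator relations, I would apply Leibniz with $e'_a=e^{\phi/3}\bar{e}^\mH_a$ and $e'_{10}=e^{-2\phi/3}\bar{e}_{10}$. Two simplifications are crucial: $[\bar{e}^\mH_a,\bar{e}_{10}]=0$ by Lemma \ref{lem:commutat H proj S}, and $\bar{e}_{10}\phi=0$ because $\phi$ is pulled back from $N$. Applied to $[e'_a,e'_{10}]$, only the term $e^{\phi/3}\bar{e}^\mH_a(e^{-2\phi/3})\bar{e}_{10}=-\tfrac{2}{3}e^{-\phi/3}d\phi_a\bar{e}_{10}$ survives, and substituting $\bar{e}_{10}=e^{2\phi/3}e'_{10}$ produces the stated formula. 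For $[e'_a,e'_b]$, the Leibniz derivative terms give $\tfrac{1}{3}e^{\phi/3}(d\phi_a e'_b-d\phi_b e'_a)$ after the same substitution, and one is left with $e^{2\phi/3}[\bar{e}^\mH_a,\bar{e}^\mH_b]$, which I split into its horizontal and vertical parts. The vertical part equals $G_2(e_a,e_b)\bar{S}=G_{2ab}e^{2\phi/3}e'_{10}$ by Lemma \ref{lem:commutat H proj S}, contributing $e^{4\phi/3}G_{2ab}e'_{10}$.

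The one step that is not pure bookkeeping is the horizontal part $[\bar{e}^\mH_a,\bar{e}^\mH_b]^\mH$. Since $\pi_{N*}\bar{X}^\mH=X$ by Proposition \ref{prop:horizontal projection}, the vector fields $\bar{e}^\mH_a$ and $e_a$ are $\pi_N$-related, so $\pi_{N*}[\bar{e}^\mH_a,\bar{e}^\mH_b]=[e_a,e_b]$. Because $\pi_{N*}$ restricts to a linear isomorphism $\mH\to TN$ whose inverse is the horizontal lift, this forces $[\bar{e}^\mH_a,\bar{e}^\mH_b]^\mH=\overline{[e_a,e_b]}^\mH$, yielding the summand $e^{2\phi/3}\overline{[e_a,e_b]}^\mH$. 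This $\pi_N$-relatedness argument is the only conceptual step; everything else reduces to a careful tracking of the exponential factors $e^{\pm\phi/3},e^{\pm 2\phi/3}$ produced by the rescalings in the definition of the frame and the Leibniz rule.
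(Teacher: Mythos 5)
Your proposal is correct: the orthonormality and volume-form statements follow exactly as you say from Proposition \ref{prop:horizontal projection}, and your Leibniz-rule computation of the commutators, together with the $\pi_N$-relatedness argument identifying $[\bar{e}_a^\mH,\bar{e}_b^\mH]^\mH$ with the horizontal lift $\overline{[e_a,e_b]}^\mH$, is sound. The paper omits the proof of this lemma (it is left as a direct calculation), and your argument is precisely the intended verification, so there is nothing further to compare.
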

\begin{lem} The Levi-Civita connection of the metric $g_N^\phi$ is related to the Levi-Civita connection of $g_N$ by
\begin{equation*}
g_N\left(\nabla^{N\phi}_ae_b,e_c\right)=g_N\left(\nabla^N_ae_b,e_c\right)-\frac{1}{3}(d\phi_a\eta_{bc}+d\phi_b\eta_{ca}-d\phi_c\eta_{ab}).
\end{equation*}
\end{lem}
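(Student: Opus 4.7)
The plan is to compare the two Levi-Civita connections directly via the Koszul formula, exploiting the fact that $g_N^\phi = e^{-2\phi/3}g_N$ is a conformal rescaling of $g_N$ and that the frame $\{e_a\}$ is $g_N$-orthonormal (so $g_N(e_b,e_c)=\eta_{bc}$ is constant along $N$).

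First I would write the Koszul formula for $g_N^\phi$ applied to the triple $(e_a,e_b,e_c)$:
\begin{align*}
2g_N^\phi(\nabla^{N\phi}_a e_b,e_c) &= e_a\bigl(g_N^\phi(e_b,e_c)\bigr) + e_b\bigl(g_N^\phi(e_a,e_c)\bigr) - e_c\bigl(g_N^\phi(e_a,e_b)\bigr)\\
&\quad + g_N^\phi([e_a,e_b],e_c) - g_N^\phi([e_a,e_c],e_b) - g_N^\phi([e_b,e_c],e_a).
\end{align*}
Since $g_N^\phi(e_b,e_c)=e^{-2\phi/3}\eta_{bc}$, each of the three ``derivative'' terms contributes $-\tfrac{2}{3}e^{-2\phi/3}$ times $d\phi_a\eta_{bc}$, $d\phi_b\eta_{ac}$, $-d\phi_c\eta_{ab}$ respectively. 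Each Lie-bracket term acquires a common factor $e^{-2\phi/3}$ out front.

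Next I would write the same Koszul formula for $g_N$. Because $g_N(e_b,e_c)=\eta_{bc}$ is constant, the three derivative terms vanish identically, so
\begin{equation*}
2g_N(\nabla^N_a e_b,e_c) = g_N([e_a,e_b],e_c) - g_N([e_a,e_c],e_b) - g_N([e_b,e_c],e_a).
\end{equation*}
Substituting this into the previous expression and dividing through by $2e^{-2\phi/3}$ gives
\begin{equation*}
g_N(\nabla^{N\phi}_a e_b,e_c) = g_N(\nabla^N_a e_b,e_c) - \tfrac{1}{3}\bigl(d\phi_a\eta_{bc}+d\phi_b\eta_{ca}-d\phi_c\eta_{ab}\bigr),
\end{equation*}
which is the claimed identity.

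There is no real obstacle here; the only point to be careful about is keeping track of the sign $e^{2f}$ convention, since the conformal factor is $e^{-2\phi/3}$ (so $f=-\phi/3$), which produces the minus sign and the $\tfrac{1}{3}$ in front of the bracket. The rest is bookkeeping with the Koszul formula on an orthonormal frame.
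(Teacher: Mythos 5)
Your proof is correct: the Koszul-formula computation with the $g_N$-orthonormal frame (so the metric-derivative terms carry all the $\phi$-dependence and the bracket terms just pick up the overall factor $e^{-2\phi/3}$), combined with $g_N^\phi(\nabla^{N\phi}_a e_b,e_c)=e^{-2\phi/3}g_N(\nabla^{N\phi}_a e_b,e_c)$ on the left before dividing, yields exactly the stated identity. The paper states this lemma without proof, implicitly relying on the standard conformal-rescaling formula $\nabla'_XY=\nabla_XY+df(X)Y+df(Y)X-g(X,Y)\,\mathrm{grad}f$ for $g'=e^{2f}g$ with $f=-\phi/3$, and your argument is precisely the standard derivation of that formula, so there is nothing to add.
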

\begin{cor}\label{cor:LC connection M onb} In the basis vector fields $e_0',\ldots,e_{10}'$ the Levi-Civita connection $\nabla^M$ is given by:
\begin{align*}
\nabla^M_{10'}e'_{10}&=-\frac{2}{3}e^{\phi/3}d\phi^ce'_c\\
\nabla^M_{10'}e_a'&=-\frac{1}{2}e^{4\phi/3}{G_2}_a^{\,\,\,c}e'_c+\frac{2}{3}e^{\phi/3}d\phi_ae_{10}'\\
\nabla^M_{a'}e_{10}'&=-\frac{1}{2}e^{4\phi/3}{G_2}_a^{\,\,\,c}e'_c\\
\nabla^M_{a'}e_b'&=e^{\phi/3}g_N(\nabla^N_ae_b,e_c)\eta^{cd}e_d'-\frac{1}{3}e^{\phi/3}(d\phi_be_a'-d\phi^c\eta_{ab}e_c')\\
&\quad+\frac{1}{2}e^{4\phi/3}{G_2}_{ab}e_{10}'.
\end{align*}
\end{cor}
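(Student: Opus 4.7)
The plan is to translate each formula from Proposition \ref{prop:Koszul M conn}, which expresses $\nabla^M$ with respect to the non-normalized basis $\{\bar{X}^\mH,\bar{S}\}$, into the orthonormal basis $\{e_a', e_{10}'\}$ of Lemma \ref{lem:ONB for M and N}. Since $e_a' = e^{\phi/3}\bar{e}_a^\mH$ and $e_{10}' = e^{-2\phi/3}\bar{S}$, this is a direct but careful application of the Leibniz rule for $\nabla^M$, combined with two simplifying observations: first, $\phi$ is pulled back from $N$, hence $\bar{S}(\phi)=0$, which kills several derivative-of-coefficient terms; second, $\bar{e}_a^\mH(\phi) = (\bar{e}_a + C_{1a}\bar{S})(\phi) = d\phi_a$, which supplies the remaining coefficient derivatives.

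First I would handle the three easy identities. For $\nabla^M_{e_{10}'}e_{10}'$ the Leibniz rule gives $e^{-4\phi/3}\nabla^M_{\bar S}\bar S$ (the $\bar S$-derivative of $e^{-2\phi/3}$ vanishes), into which one substitutes the first formula of Proposition \ref{prop:Koszul M conn} and rewrites $\overline{(d\phi)^\sharp}^\mH = d\phi^c\bar{e}_c^\mH = e^{-\phi/3}d\phi^c e_c'$. The mixed brackets $\nabla^M_{e_{10}'}e_a'$ and $\nabla^M_{e_a'}e_{10}'$ are treated identically, keeping track of the extra factor $-\tfrac{2}{3}e^{-2\phi/3}d\phi_a$ coming from differentiating the exponential prefactor of $\bar S$ along $\bar{e}_a^\mH$ (whereas no such term appears when differentiating along $\bar S$). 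A short computation shows that the $d\phi_a$-contributions in $\nabla^M_{e_a'}e_{10}'$ cancel exactly, yielding only the $G_2$-term, while in $\nabla^M_{e_{10}'}e_a'$ the corresponding $d\phi_a$-term survives with coefficient $\tfrac{2}{3}e^{\phi/3}$.

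The last identity $\nabla^M_{e_a'}e_b'$ is the main bookkeeping step. Expanding by Leibniz I get
\begin{equation*}
\nabla^M_{e_a'}e_b' = e^{2\phi/3}\Bigl(\tfrac{1}{3}d\phi_a\,\bar{e}_b^\mH + \nabla^M_{\bar{e}_a^\mH}\bar{e}_b^\mH\Bigr),
\end{equation*}
and the third formula of Proposition \ref{prop:Koszul M conn} replaces $\nabla^M_{\bar{e}_a^\mH}\bar{e}_b^\mH$ by $\overline{(\nabla^{N\phi}_{e_a}e_b)}^\mH + \tfrac{1}{2}G_2(e_a,e_b)\bar S$. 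I then apply the lemma relating $\nabla^{N\phi}$ to $\nabla^N$ to expand the conformal contribution
\begin{equation*}
g_N(\nabla^{N\phi}_a e_b, e_c) = g_N(\nabla^N_a e_b, e_c) - \tfrac{1}{3}(d\phi_a\eta_{bc} + d\phi_b\eta_{ca} - d\phi_c\eta_{ab}),
\end{equation*}
raise an index with $\eta^{cd}$, and convert back to the primed basis using $\bar{e}_c^\mH = e^{-\phi/3}e_c'$ and $\bar S = e^{2\phi/3}e_{10}'$. The $\tfrac{1}{3}d\phi_a e_b'$ coming from the Leibniz rule cancels against the $-\tfrac{1}{3}d\phi_a\eta_{bc}$ piece produced by the conformal change, leaving precisely the two surviving $d\phi$ terms and the $G_2$ term claimed in the corollary.

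The only real obstacle is the index juggling in the final step: keeping the three $d\phi$ contributions straight while alternating between the bases $\{\bar{e}_a,\bar{S}\}$ and $\{e_a',e_{10}'\}$ and tracking the exponential prefactors $e^{\pm\phi/3}$, $e^{\pm 2\phi/3}$, $e^{\pm 4\phi/3}$. Once the cancellation $\tfrac{1}{3}d\phi_a e_b' - \tfrac{1}{3}d\phi_a e_b' = 0$ is identified, all four formulas drop out and the corollary follows.
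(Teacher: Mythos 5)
Your proposal is correct and follows exactly the route the paper intends: transport the formulas of Proposition \ref{prop:Koszul M conn} to the orthonormal basis of Lemma \ref{lem:ONB for M and N} via the Leibniz rule, using $\bar{S}(\phi)=0$, $\bar{e}_a^{\mathcal{H}}(\phi)=d\phi_a$, and the lemma relating $\nabla^{N\phi}$ to $\nabla^N$, with the exponential prefactors and the cancellation of the $\tfrac{1}{3}d\phi_a$- and $d\phi_a\bar{S}$-terms handled correctly. The coefficients you obtain in all four formulas agree with the corollary, so nothing is missing.
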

Defining the coefficients
\begin{align*}
\omega^M_{\mu\nu\rho}&=g_M(\nabla^M_{\mu'}e_{\nu}',e_{\rho}')\\
\omega^N_{\mu\nu\rho}&=g_N(\nabla^N_\mu e_\nu,e_\rho)
\end{align*}
we get:
\begin{cor}\label{cor:conn 1-forms M} The connection $1$-forms of the Levi-Civita connection $\nabla^M$ are given by:
\begin{align*}
\omega^M_{1010c}&=-\frac{2}{3}e^{\phi/3}d\phi_c\\
\omega^M_{10bc}&=-\frac{1}{2}e^{4\phi/3}{G_2}_{bc}\\
\omega^M_{a10c}&=-\frac{1}{2}e^{4\phi/3}{G_2}_{ac}\\
\omega^M_{abc}&=e^{\phi/3}\omega^N_{abc}-\frac{1}{3}e^{\phi/3}(d\phi_b\eta_{ac}-d\phi_c\eta_{ab}).
\end{align*}
\end{cor}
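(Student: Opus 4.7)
The plan is to obtain this corollary as a direct bookkeeping consequence of Corollary~\ref{cor:LC connection M onb} together with the definition $\omega^M_{\mu\nu\rho}=g_M(\nabla^M_{\mu'}e'_\nu,e'_\rho)$ and the fact that $(e'_0,\ldots,e'_9,e'_{10})$ is $g_M$-orthonormal (so $g_M(e'_a,e'_b)=\eta_{ab}$ and $g_M(e'_{10},e'_{10})=1$, with $e'_{10}$ orthogonal to the other $e'_a$). Because the right-hand sides in Corollary~\ref{cor:LC connection M onb} are already expanded in the frame $\{e'_0,\ldots,e'_{10}\}$, computing $\omega^M_{\mu\nu\rho}$ reduces to pairing the expansion with $e'_\rho$ under $g_M$ and lowering the contracted index with $\eta$.

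Concretely, for the first line I take the inner product of $\nabla^M_{10'}e'_{10}=-\tfrac{2}{3}e^{\phi/3}d\phi^d e'_d$ with $e'_c$, which gives $-\tfrac{2}{3}e^{\phi/3}d\phi^d\eta_{dc}=-\tfrac{2}{3}e^{\phi/3}d\phi_c$. For the second and third lines I use the $e'_c$-components of $\nabla^M_{10'}e'_b$ and $\nabla^M_{a'}e'_{10}$ respectively; in each case the only contributing term is $-\tfrac{1}{2}e^{4\phi/3}{G_2}_\bullet{}^d e'_d$, whose inner product with $e'_c$ gives $-\tfrac{1}{2}e^{4\phi/3}{G_2}_{\bullet c}$ after lowering with $\eta$. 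The vertical component $\tfrac{2}{3}e^{\phi/3}d\phi_a e'_{10}$ appearing in $\nabla^M_{10'}e'_a$ is orthogonal to $e'_c$ and drops out, which is why no extra term appears in the second line.

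For the last line I pair $\nabla^M_{a'}e'_b$ with $e'_c$. The first summand contributes $e^{\phi/3}g_N(\nabla^N_ae_b,e_d)\eta^{de}\eta_{ec}=e^{\phi/3}\omega^N_{abc}$ by the definition of $\omega^N$, the dilaton piece $-\tfrac{1}{3}e^{\phi/3}(d\phi_b e'_a-d\phi^d\eta_{ab}e'_d)$ contributes $-\tfrac{1}{3}e^{\phi/3}(d\phi_b\eta_{ac}-d\phi_c\eta_{ab})$ after lowering, and the $\tfrac{1}{2}e^{4\phi/3}{G_2}_{ab}e'_{10}$ piece again drops since $e'_{10}\perp e'_c$. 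Summing these yields the stated formula for $\omega^M_{abc}$.

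There is no real obstacle here: Corollary~\ref{cor:LC connection M onb} has already done the Koszul-formula work in Proposition~\ref{prop:Koszul M conn} and translated the result into the orthonormal frame via Lemma~\ref{lem:ONB for M and N}. The only item requiring a moment of care is ensuring that the frame is orthonormal so that $\eta$ does the index-lowering (rather than some conformally rescaled metric), and that the vertical components in $\nabla^M_{10'}e'_a$ and $\nabla^M_{a'}e'_b$ do not contaminate the $\omega^M_{\mu bc}$ coefficients with $c\in\{0,\ldots,9\}$; both points follow directly from the construction of the frame in Lemma~\ref{lem:ONB for M and N}.
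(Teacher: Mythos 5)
Your proposal is correct and is essentially the paper's own route: the corollary is stated as an immediate consequence of Corollary~\ref{cor:LC connection M onb} and the definition $\omega^M_{\mu\nu\rho}=g_M(\nabla^M_{\mu'}e'_\nu,e'_\rho)$, obtained exactly by pairing the frame expansions with $e'_\rho$ and lowering with $\eta$ using the $g_M$-orthonormality of $(e'_0,\ldots,e'_{10})$ from Lemma~\ref{lem:ONB for M and N}. Your bookkeeping (including the observation that the $e'_{10}$-components drop out of $\omega^M_{10bc}$ and $\omega^M_{abc}$) matches the intended derivation.
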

\begin{rem}
If we use the sign in \cite{BBS} mentioned in Remark \ref{rem: sign BBS} above, we have to change the sign in front of $C_1$ and $G_2$ in Proposition \ref{prop:horizontal projection}, Lemma \ref{lem:commutat H proj S}, Proposition \ref{prop:Koszul M conn}, Lemma \ref{lem:ONB for M and N}, Corollary \ref{cor:LC connection M onb} and Corollary \ref{cor:conn 1-forms M}.
\end{rem}
\begin{defn}
For vector fields $V,W$ on $M$ we set
\begin{equation*}
A_VW=\left(\nabla^M_{V^\mH}W^\mH\right)^\mV+\left(\nabla^M_{V^\mH}W^\mV\right)^\mH.
\end{equation*}
Then $A$ is a tensor, i.e.~function linear in its arguments.
\end{defn}
\begin{lem}
We have
\begin{align*}
A_{e_a'}e_b'&=\frac{1}{2}e^{4\phi/3}{G_2}_{ab}e_{10}'\\
A_{e_a'}e_{10}'&=-\frac{1}{2}e^{4\phi/3}{G_2}_a^{\,\,\,c}e_c'.
\end{align*}
\end{lem}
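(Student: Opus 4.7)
The plan is to evaluate the tensor $A$ directly using its definition, since all the hard work has already been done in Corollary \ref{cor:LC connection M onb}. The key observation is that the frame introduced in Lemma \ref{lem:ONB for M and N} is adapted to the splitting $TM = \mH \oplus \mV$: the vectors $e_a'$ for $0 \leq a \leq 9$ are horizontal (they are rescalings of $\bar{e}_a^\mH$), while $e_{10}'$ is vertical (it is a rescaling of $\bar{S}$). So projecting onto $\mH$ or $\mV$ amounts to simply retaining the part of a sum that lies in $\mathrm{span}(e_0',\ldots,e_9')$ or in $\mathrm{span}(e_{10}')$, respectively.

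For the first identity, both $e_a'$ and $e_b'$ are horizontal, so $(e_a')^\mH = e_a'$, $(e_b')^\mV = 0$, and the definition of $A$ collapses to $A_{e_a'}e_b' = (\nabla^M_{e_a'}e_b')^\mV$. I would then read off $\nabla^M_{e_a'}e_b'$ from Corollary \ref{cor:LC connection M onb}: the first two terms on the right-hand side only involve basis vectors $e_c'$ with $c \leq 9$ and are therefore horizontal, while the remaining term $\tfrac{1}{2}e^{4\phi/3}{G_2}_{ab}e_{10}'$ is vertical. Extracting the vertical part yields the claimed formula.

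For the second identity, $e_a'$ is horizontal and $e_{10}'$ is vertical, so $A_{e_a'}e_{10}' = (\nabla^M_{e_a'}e_{10}')^\mH$. Again I would consult Corollary \ref{cor:LC connection M onb}, which gives $\nabla^M_{e_a'}e_{10}' = -\tfrac{1}{2}e^{4\phi/3}{G_2}_a^{\,\,\,c}e_c'$; this expression is already purely horizontal, so taking the horizontal projection returns it unchanged.

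No real obstacle arises here: the only point requiring any care is keeping track of which summands of the Levi-Civita formula are horizontal and which are vertical, which is visible at a glance from Lemma \ref{lem:ONB for M and N}. In particular, one should note that the tensorial character of $A$ (as already remarked in the definition) does not need to be re-verified, since it is a general fact about O'Neill's $A$-tensor for Riemannian submersions and the displayed formulas are consistent with this.
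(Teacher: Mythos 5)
Your proposal is correct and is essentially the argument the paper intends (the lemma is stated there without an explicit proof, immediately after the corollary giving $\nabla^M$ in the adapted frame): since $e_0',\ldots,e_9'$ span $\mH$ and $e_{10}'$ spans $\mV$, the definition of $A$ reduces to taking the vertical part of $\nabla^M_{a'}e_b'$ and the horizontal part of $\nabla^M_{a'}e_{10}'$, which is exactly how you read off the two formulas.
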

There is the following general formula for the curvature of $M$ (this is equation $\{2\}$ in \cite{ONeill}, where the curvature is defined with the opposite sign).
\begin{lem}Suppose $X,Y,Z,W$ are vector fields on $N$. Then
\begin{align*}
g_M(R^M(\bar{X},\bar{Y})\bar{Z},\bar{W})&=g^\phi_N(R^{N\phi}(X,Y)Z,W)+2g_M(A_{\bar{X}}\bar{Y},A_{\bar{Z}}\bar{W})\\
&\quad - g_M(A_{\bar{Y}}\bar{Z},A_{\bar{X}}\bar{W})-g_M(A_{\bar{Z}}\bar{X},A_{\bar{Y}}\bar{W}).
\end{align*}
\end{lem}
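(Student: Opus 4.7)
The formula is the classical O'Neill curvature identity for Riemannian submersions applied to $\pi_N : (M,g_M) \to (N,g_N^\phi)$ (which was just shown to be a Riemannian submersion), with an overall sign adjustment to match the paper's curvature convention, which is opposite to that of \cite{ONeill}. I would reproduce O'Neill's derivation directly. Since the $A$-tensor already uses only the horizontal parts of its arguments and $R^M$ is tensorial in all four slots, the identity reduces to the case where each $\bar X, \bar Y, \bar Z, \bar W$ is replaced by its horizontal lift $\bar X^\mH, \bar Y^\mH, \bar Z^\mH, \bar W^\mH$.

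The two structural inputs are: (i) Proposition \ref{prop:Koszul M conn}, which yields for horizontal lifts
\[
\nabla^M_{\bar X^\mH}\bar Y^\mH = \overline{(\nabla^{N\phi}_X Y)}^\mH + A_{\bar X^\mH}\bar Y^\mH,
\]
with the second summand vertical, and (ii) Lemma \ref{lem:commutat H proj S}, giving $[\bar X^\mH,\bar Y^\mH]^\mV = 2\,A_{\bar X^\mH}\bar Y^\mH$, which in particular implies that $A$ is skew on horizontal pairs. I would also need the metric-compatibility identity
\[
g_M(A_V W, T) = -\,g_M(W, A_V T) \qquad (V,W \text{ horizontal},\ T \text{ vertical}),
\]
which follows from $\nabla^M g_M=0$ together with orthogonality of $\mH$ and $\mV$ and the fact that $A_V$ exchanges horizontal and vertical subspaces.

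With these in hand I would expand
\[
R^M(\bar X^\mH,\bar Y^\mH)\bar Z^\mH = \nabla^M_{\bar X^\mH}\nabla^M_{\bar Y^\mH}\bar Z^\mH - \nabla^M_{\bar Y^\mH}\nabla^M_{\bar X^\mH}\bar Z^\mH - \nabla^M_{[\bar X^\mH,\bar Y^\mH]}\bar Z^\mH,
\]
substitute (i) at every occurrence of $\nabla^M$, split the commutator via (ii), and pair with $\bar W^\mH$. The purely ``basic'' horizontal terms collect into $g_N^\phi(R^{N\phi}(X,Y)Z,W)$ using that $\pi_{N*}[\bar X^\mH,\bar Y^\mH]^\mH = [X,Y]$ and that the restriction of $\nabla^M$ to basic horizontal fields is the horizontal lift of $\nabla^{N\phi}$. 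The remaining contributions are each quadratic in $A$; after repeated applications of the skew-adjointness identity above to move an $A$ across the metric pairing, they assemble into exactly the three terms $2\,g_M(A_{\bar X}\bar Y, A_{\bar Z}\bar W) - g_M(A_{\bar Y}\bar Z, A_{\bar X}\bar W) - g_M(A_{\bar Z}\bar X, A_{\bar Y}\bar W)$.

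The main obstacle is purely combinatorial bookkeeping: tracking signs and horizontal/vertical projections through the iterated covariant derivatives, and aligning the paper's curvature sign convention with that of \cite{ONeill}. No ingredient beyond the Koszul-based computation of $\nabla^M$ in Proposition \ref{prop:Koszul M conn}, the bracket identity of Lemma \ref{lem:commutat H proj S}, and the skew-adjointness of $A$ is required.
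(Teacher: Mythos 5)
The paper itself contains no proof of this lemma: it is quoted directly as equation $\{2\}$ of \cite{ONeill}, with only the remark that O'Neill's curvature is defined with the opposite sign. Your plan to re-derive it from Proposition \ref{prop:Koszul M conn}, Lemma \ref{lem:commutat H proj S} and the skew-adjointness of $A$ is therefore a genuinely different, self-contained route, and its core is exactly the standard proof of O'Neill's horizontal curvature equation: expand $R^M$ on horizontal lifts, use that the horizontal part of $\nabla^M_{\bar X^\mH}\bar Y^\mH$ is $\overline{(\nabla^{N\phi}_XY)}^\mH$ while its vertical part is $A_{\bar X^\mH}\bar Y^\mH=\tfrac12\,[\bar X^\mH,\bar Y^\mH]^\mV$, and move each $A$ across the metric pairing by skew-adjointness. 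With the paper's convention $R(X,Y)=\nabla_X\nabla_Y-\nabla_Y\nabla_X-\nabla_{[X,Y]}$ this does produce the stated signs (the term $+g_M(A_{\bar X}\bar Z,A_{\bar Y}\bar W)$ arising from the second iterated derivative becomes $-g_M(A_{\bar Z}\bar X,A_{\bar Y}\bar W)$ by skewness of $A$ on horizontal pairs). What your route buys is a verification inside the paper's own framework, using precisely the structures already computed; what the citation buys is brevity.

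The one step that is not justified as written is your opening reduction. By the paper's Definition, $\bar X$ is the lift tangent to the $N$-factor, and by Proposition \ref{prop:horizontal projection} it is \emph{not} horizontal: $\bar X^\mH=\bar X+C_1(X)\bar S$. Tensoriality of $R^M$ allows pointwise evaluation but does not allow replacing arguments by their horizontal projections, and $A_VW$, while tensorial, does depend on $W^\mV$ through the summand $(\nabla^M_{V^\mH}W^\mV)^\mH$; indeed $A_{\bar X}\bar Y=A_{\bar X^\mH}\bar Y^\mH-C_1(Y)\bigl(\nabla^M_{\bar X^\mH}\bar S\bigr)^\mH$, and the last term is generically nonzero by Proposition \ref{prop:Koszul M conn}. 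Likewise the curvature terms picked up when trading $\bar X$ for $\bar X^\mH$ are mixed components with an $\bar S$-slot, which by O'Neill's other fundamental equations involve the fibre second fundamental form $T$ and covariant derivatives of $A$, neither of which vanishes here (the fibre metric $e^{4\phi/3}$ varies over $N$, so $\nabla^M_{\bar S}\bar S\neq 0$). So either state and prove the identity directly for horizontal lifts---which is how it is actually used in the following proposition, where only the horizontal frame vectors $e_a'$ enter---or, if you insist on the literal $\bar X$, you must compute those extra mixed components and show the two sides change consistently; the appeal to tensoriality alone does not do this.
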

\begin{prop} We get
\begin{align*}
g_M(R^M(e_a',e_b')e_c',e_d')\eta^{ad}\eta^{bc}&=e^{4\phi/3}g_N^\phi(R^{N\phi}(e_a,e_b)e_c,e_d)\eta^{ad}\eta^{bc}-\frac{3}{2}e^{8\phi/3}|G_2|^2\\
&=R^{N\phi}-\frac{3}{2}e^{8\phi/3}|G_2|^2\\
&=e^{2\phi/3}\left(R^N+6\Delta\phi-8|d\phi|^2\right)-\frac{3}{2}e^{8\phi/3}|G_2|^2.
\end{align*}
\end{prop}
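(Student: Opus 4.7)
The plan is to combine three ingredients: tensoriality of the Riemann and $A$-tensors to switch between the $g_M$-orthonormal frame $\{e'_a\}$ and the horizontal lifts $\{\bar{e}_a^\mH\}$; O'Neill's curvature formula for the Riemannian submersion $\pi_N\colon(M,g_M)\to(N,g_N^\phi)$; and the standard conformal rescaling formula for the scalar curvature applied to $g_N^\phi=e^{-2\phi/3}g_N$.

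First I would use $e'_a=e^{\phi/3}\bar{e}_a^\mH$ from Lemma \ref{lem:ONB for M and N} together with function-linearity of $R^M$ in each slot to factor out an overall $e^{4\phi/3}$, reducing the left-hand side to $e^{4\phi/3}\eta^{ad}\eta^{bc}g_M(R^M(\bar{e}_a^\mH,\bar{e}_b^\mH)\bar{e}_c^\mH,\bar{e}_d^\mH)$. Applying O'Neill's formula from the preceding lemma with $X=e_a$, $Y=e_b$, $Z=e_c$, $W=e_d$ splits this inner expression into the base-curvature contribution $g_N^\phi(R^{N\phi}(e_a,e_b)e_c,e_d)$ plus three quadratic $A$-tensor correction terms with coefficients $+2,-1,-1$. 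Using tensoriality of $A$ and the formula $A_{e'_a}e'_b=\tfrac{1}{2}e^{4\phi/3}(G_2)_{ab}e'_{10}$ from the preceding lemma I obtain $A_{\bar{e}_a^\mH}\bar{e}_b^\mH=\tfrac{1}{2}e^{2\phi/3}(G_2)_{ab}e'_{10}$, so that each $A$-pairing reduces to $\tfrac{1}{4}e^{4\phi/3}$ times a product of two components of $G_2$. The three resulting contractions $\eta^{ad}\eta^{bc}(G_2)_{ij}(G_2)_{kl}$ are then evaluated using antisymmetry of $G_2$: one pairing vanishes because $\eta^{ab}(G_2)_{ab}=0$, and the other two evaluate to $\pm 2|G_2|^2$; combining them with the coefficients from O'Neill produces the correction $-\tfrac{3}{2}e^{4\phi/3}|G_2|^2$, which together with the overall $e^{4\phi/3}$ yields the first equality $-\tfrac{3}{2}e^{8\phi/3}|G_2|^2$.

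The second equality is immediate from the definition of the scalar curvature: the frame $\tilde{e}_a=e^{\phi/3}e_a$ is $g_N^\phi$-orthonormal, and tensoriality of $R^{N\phi}$ together with the standard symmetry $g(R(X,Y)Z,W)=g(R(Z,W)X,Y)$ gives $R^{N\phi}=\eta^{ad}\eta^{bc}g_N^\phi(R^{N\phi}(\tilde{e}_a,\tilde{e}_b)\tilde{e}_c,\tilde{e}_d)=e^{4\phi/3}\eta^{ad}\eta^{bc}g_N^\phi(R^{N\phi}(e_a,e_b)e_c,e_d)$. For the third equality I would invoke the standard conformal rescaling formula for the scalar curvature (the same one the paper uses in Section \ref{sect:IIA Einstein frame} to derive $R_E$ from $R$) applied to $g_N^\phi=e^{-2\phi/3}g_N$ in dimension $n=10$ with $\Omega=e^{-\phi/3}$, producing $R^{N\phi}=e^{2\phi/3}(R^N+6\Delta\phi-8|d\phi|^2)$. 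The main bookkeeping obstacle is the $A$-tensor step: correctly identifying the three contractions after the index relabelings imposed by O'Neill's formula and keeping the signs straight via antisymmetry of $G_2$, while simultaneously tracking every factor of $e^{\phi/3}$ as one moves between the frames $\{e'_a\}$, $\{\bar{e}_a^\mH\}$, $\{e_a\}$ and the $g_N^\phi$-orthonormal frame $\{\tilde{e}_a\}$.
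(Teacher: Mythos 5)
Your proposal is correct and follows essentially the same route the paper intends: the O'Neill formula and the $A$-tensor lemma stated just before the proposition, combined with the $g_N^\phi$-orthonormal frame $e^{\phi/3}e_a$ and the conformal rescaling formula from Section \ref{sect:IIA Einstein frame}. Your explicit bookkeeping of the three $A$-contractions (coefficients $2,-1,-1$ giving $-|G_2|^2-0-\tfrac{1}{2}|G_2|^2=-\tfrac{3}{2}|G_2|^2$) and of the conformal factors reproduces exactly the coefficients $-\tfrac{3}{2}e^{8\phi/3}$, $6\Delta\phi$ and $-8|d\phi|^2$ in the statement.
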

In the second equation $R^{N\phi}$ denotes the scalar curvature of $g_N^\phi$ and we used that the vectors
\begin{equation*}
e_a^\phi=e^{\phi/3}e_a
\end{equation*}
are an orthonormal basis with respect to $g_N^\phi$. We also used the formula for the scalar curvature of a conformally changed metric from Section \ref{sect:IIA Einstein frame}.

Using the identity
\begin{equation*}
g_M(\nabla_A\nabla_BC,D)=L_Ag_M(\nabla_BC,D)-g_M(\nabla_BC,\nabla_AD)
\end{equation*}
and
\begin{equation*}
\Delta\phi=L_ad\phi^a-\omega^{Na}_{\,\,\,\,\,\,\,\,\,ad}d\phi^d
\end{equation*}
we can calculate:
\begin{prop}
We have
\begin{align*}
g_M(\nabla_{a'}\nabla_{10'}e_{10}',e_{d}')\eta^{ad}&=e^{2\phi/3}\left(\frac{16}{9}|d\phi|^2-\frac{2}{3}\Delta\phi\right)\\
g_M(\nabla_{10'}\nabla_{a'}e_{10}',e'_{d})\eta^{ad}&=-\frac{1}{2}e^{8\phi/3}|G_2|^2\\
g_M(\nabla_{[a',10']}e_{10}',e_{d}')\eta^{ad}&=\frac{4}{9}e^{2\phi/3}|d\phi|^2
\end{align*}
and
\begin{equation*}
g_M(R(e_a',e_{10}')e_{10}',e_d')\eta^{ad}=e^{2\phi/3}\left(\frac{4}{3}|d\phi|^2-\frac{2}{3}\Delta\phi\right)+\frac{1}{2}e^{8\phi/3}|G_2|^2.
\end{equation*}
\end{prop}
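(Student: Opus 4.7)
The plan is to evaluate each of the four expressions separately using the explicit formulas for $\nabla^M$ in the orthonormal frame from Corollary \ref{cor:LC connection M onb}, together with the Leibniz-style identity $g_M(\nabla_A\nabla_BC,D) = L_A g_M(\nabla_BC,D) - g_M(\nabla_BC,\nabla_AD)$ stated just before the proposition, and then combine them via the standard definition $R(X,Y)Z = \nabla_X\nabla_YZ - \nabla_Y\nabla_XZ - \nabla_{[X,Y]}Z$.

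For the first identity I would start from $\nabla^M_{10'}e_{10}' = -\tfrac{2}{3}e^{\phi/3}d\phi^c e_c'$ and take the inner product with $e_d'$ to get the scalar $-\tfrac{2}{3}e^{\phi/3}d\phi_d$. Differentiating in the direction $e_a'$ — which on functions pulled back from $N$ acts as $e^{\phi/3}L_{e_a}$, since the $\bar{e}_{10}$-part of $e_a'$ kills $S^1$-invariant functions — produces a $\tfrac{1}{3}d\phi_a d\phi_d$ piece from differentiating the factor $e^{\phi/3}$ and an $L_a d\phi_d$ piece from differentiating $d\phi_d$. After contracting with $\eta^{ad}$, the second piece converts via the excerpt's identity $\Delta\phi=L_ad\phi^a-\omega^{Na}_{\ \ ad}d\phi^d$ into $\Delta\phi+\omega^{Na}_{\ \ ad}d\phi^d$. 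The correction term $-g_M(\nabla^M_{10'}e_{10}',\nabla^M_{a'}e_d')$ is then expanded using the formula for $\omega^M_{adc}$ from Corollary \ref{cor:conn 1-forms M}: the $\omega^N$-contribution in $\omega^M_{adc}$ will exactly cancel the $\omega^{Na}_{\ \ ad}d\phi^d$ coming from the Laplacian identity, while the $d\phi$-trace contributions from $\omega^M_{adc}$ (using that $\eta^{ad}\eta_{ad}=10$) combine with the $\tfrac{1}{3}d\phi_a d\phi_d$ term to produce the asserted $e^{2\phi/3}(\tfrac{16}{9}|d\phi|^2-\tfrac{2}{3}\Delta\phi)$.

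The second identity is simpler: since $g_M(\nabla^M_{a'}e_{10}',e_d')=-\tfrac{1}{2}e^{4\phi/3}{G_2}_{ad}$ is pulled back from $N$, the Lie derivative $L_{10'}$ annihilates it and only the connection correction survives. Substituting $\nabla^M_{10'}e_d'=-\tfrac{1}{2}e^{4\phi/3}{G_2}_d^{\ \ f}e_f'+\tfrac{2}{3}e^{\phi/3}d\phi_d e_{10}'$, the $e_{10}'$-component is orthogonal to $\nabla^M_{a'}e_{10}'$, leaving $\tfrac{1}{4}e^{8\phi/3}\eta^{ad}{G_2}_a^{\ \ c}{G_2}_{dc}$, which equals $\tfrac{1}{2}e^{8\phi/3}|G_2|^2$ by the identity $\eta^{ad}\langle i_{e_a}G_2,i_{e_d}G_2\rangle=2|G_2|^2$ proved earlier in the excerpt. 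For the third identity, I substitute $[e_a',e_{10}']=-\tfrac{2}{3}e^{\phi/3}d\phi_a e_{10}'$ from Lemma \ref{lem:ONB for M and N}, pull the scalar coefficient out of the connection by function-linearity, apply the formula for $\nabla^M_{10'}e_{10}'$ once more, and contract with $\eta^{ad}$ to read off $\tfrac{4}{9}e^{2\phi/3}|d\phi|^2$.

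Finally, assembling $R(e_a',e_{10}')e_{10}'$ via the curvature formula and contracting with $\eta^{ad}g_M(\,\cdot\,,e_d')$ gives the last assertion: the $|d\phi|^2$ contributions combine as $\tfrac{16}{9}-\tfrac{4}{9}=\tfrac{4}{3}$, the $\Delta\phi$ term is inherited from the first identity, and the $|G_2|^2$ term enters with the opposite sign, producing $e^{2\phi/3}(\tfrac{4}{3}|d\phi|^2-\tfrac{2}{3}\Delta\phi)+\tfrac{1}{2}e^{8\phi/3}|G_2|^2$. The main obstacle is bookkeeping — ensuring that the spurious $\omega^N$-traces generated in the first identity by the Laplacian identity cancel exactly against those produced by the connection correction term $g_M(\nabla^M_{10'}e_{10}',\nabla^M_{a'}e_d')$ — but this is forced by metric compatibility of $\nabla^M$ and the Levi-Civita nature of $\nabla^{N\phi}$ underlying Corollary \ref{cor:conn 1-forms M}.
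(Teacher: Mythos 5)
Your proposal is correct and follows essentially the same route the paper indicates: the frame formulas of Corollary \ref{cor:LC connection M onb}, the identity $g_M(\nabla_A\nabla_BC,D)=L_Ag_M(\nabla_BC,D)-g_M(\nabla_BC,\nabla_AD)$ together with $\Delta\phi=L_ad\phi^a-\omega^{Na}_{\ \ ad}d\phi^d$, and the curvature definition; I verified that your cancellations and coefficients ($-\tfrac{2}{9}+2=\tfrac{16}{9}$, $\tfrac{16}{9}-\tfrac{4}{9}=\tfrac{4}{3}$, $\tfrac{1}{4}\cdot 2|G_2|^2=\tfrac{1}{2}|G_2|^2$) all come out as claimed. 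Just make explicit that in the second identity the surviving term carries the minus sign from $-g_M(\nabla_{a'}e_{10}',\nabla_{10'}e_d')$, so that line equals $-\tfrac{1}{2}e^{8\phi/3}|G_2|^2$, consistent with the $+\tfrac{1}{2}e^{8\phi/3}|G_2|^2$ you obtain after assembling the curvature.
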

\begin{defn}
We set
\begin{equation*}
\frac{1}{\kappa_{10}^2}=\frac{1}{\kappa_{11}^2}\int_{S^1}\alpha^{10}.
\end{equation*}
(our $\kappa_{10}$ is called $\kappa$ in \cite{BBS} and $\tilde{\kappa}_{10}$ in \cite{BLT}).
\end{defn}
\begin{cor}The Einstein-Hilbert Lagrangian of $g_M$ is given by
\begin{equation*}
R^M\mathrm{dvol}_{g_M}=\left(e^{-2\phi}\left(R^N+\frac{14}{3}\Delta\phi-\frac{16}{3}|d\phi|^2\right)-\frac{1}{2}|G_2|^2\right)\mathrm{dvol}_{g_N}\wedge\alpha^{10}.
\end{equation*}
The Einstein term on the eleven-dimensional manifold $M=N\times S^1$ is given by
\begin{equation*}
\frac{1}{2\kappa_{11}^2}\int_M\mathrm{dvol}_{g_M}R^M=\frac{1}{2\kappa_{10}^2}\int_N\mathrm{dvol}_{g_N}e^{-2\phi}\left(R^N+4|d\phi|^2\right)-\frac{1}{4\kappa_{10}^2}\int_N\mathrm{dvol}_{g_N}|G_2|^2.
\end{equation*}
\end{cor}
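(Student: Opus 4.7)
The plan is to assemble the scalar curvature $R^M$ from the two partial traces computed in the immediately preceding propositions. In the orthonormal frame $e_0',\ldots,e_{10}'$ of signature $(-,+,\ldots,+)$, I can write
\begin{equation*}
R^M=\sum_{A,B=0}^{10}\eta^{AA}\eta^{BB}\,g_M(R^M(e_A',e_B')e_B',e_A'),
\end{equation*}
and split the sum according to whether each index is horizontal ($0$--$9$) or equals $10$. The purely horizontal piece is exactly the first proposition's contraction, equal to $e^{2\phi/3}(R^N+6\Delta\phi-8|d\phi|^2)-\tfrac{3}{2}e^{8\phi/3}|G_2|^2$. The $A=B=10$ piece vanishes by antisymmetry of the Riemann tensor in its first two slots. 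The two mixed pieces (one with $B=10$, one with $A=10$) are equal to each other by the pair symmetry $R^M_{ABCD}=R^M_{CDAB}$, and each one reduces, because only diagonal entries of $\eta$ are nonzero, to exactly $\sum_a\eta^{aa}g_M(R^M(e_a',e_{10}')e_{10}',e_a')$, which is the quantity evaluated in the second proposition.

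Adding the horizontal sector to twice the mixed sector, the coefficients of $\Delta\phi$ combine as $6-2\cdot\tfrac{2}{3}=\tfrac{14}{3}$, the coefficients of $|d\phi|^2$ combine as $-8+2\cdot\tfrac{4}{3}=-\tfrac{16}{3}$, and the coefficients of $e^{8\phi/3}|G_2|^2$ combine as $-\tfrac{3}{2}+2\cdot\tfrac{1}{2}=-\tfrac{1}{2}$, so
\begin{equation*}
R^M=e^{2\phi/3}\left(R^N+\tfrac{14}{3}\Delta\phi-\tfrac{16}{3}|d\phi|^2\right)-\tfrac{1}{2}e^{8\phi/3}|G_2|^2.
\end{equation*}
Multiplying by $\mathrm{dvol}_{g_M}=e^{-8\phi/3}\mathrm{dvol}_{g_N}\wedge\alpha^{10}$ from Lemma \ref{lem:ONB for M and N} turns the overall factor $e^{2\phi/3}$ into $e^{-2\phi}$, and $e^{8\phi/3}\cdot e^{-8\phi/3}=1$, yielding the first displayed identity.

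For the integral version, I apply Fubini's theorem to $M=N\times S^1$, using $S^1$-invariance of the integrand to factor out $\int_{S^1}\alpha^{10}$, which by the definition of $\kappa_{10}$ converts the prefactor $1/(2\kappa_{11}^2)$ into $1/(2\kappa_{10}^2)$. The remaining task is to eliminate the $\Delta\phi$ term modulo a total differential, exactly as in the derivation of the string-frame action earlier in the paper: since
\begin{equation*}
e^{-2\phi}\Delta\phi\,\mathrm{dvol}_{g_N}=e^{-2\phi}d{*}d\phi=d\bigl(e^{-2\phi}{*}d\phi\bigr)+2e^{-2\phi}|d\phi|^2\mathrm{dvol}_{g_N},
\end{equation*}
the combination $\tfrac{14}{3}\Delta\phi-\tfrac{16}{3}|d\phi|^2$ becomes $\tfrac{28}{3}|d\phi|^2-\tfrac{16}{3}|d\phi|^2=4|d\phi|^2$ under the integral, giving the second displayed identity.

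The only real obstacle is the index bookkeeping in the first step: one must be careful that the $A=B=10$ term drops, that the two mixed contributions are genuinely equal (via pair symmetry rather than by accident), and that ``off-diagonal'' contributions to the mixed sum vanish because $\eta$ is diagonal. Once this combinatorial point is settled, the remainder is arithmetic together with the same integration-by-parts trick used elsewhere in the paper to pass between string- and Einstein-frame curvature formulas.
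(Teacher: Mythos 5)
Your proposal is correct and follows essentially the same route as the paper: the corollary is obtained by contracting the two preceding curvature propositions (horizontal trace plus twice the mixed trace, the $e_{10}'e_{10}'$ term vanishing), multiplying by $\mathrm{dvol}_{g_M}=e^{-8\phi/3}\mathrm{dvol}_{g_N}\wedge\alpha^{10}$, and then integrating out $S^1$ and integrating by parts via $e^{-2\phi}\Delta\phi\,\mathrm{dvol}_{g_N}=d\bigl(e^{-2\phi}*d\phi\bigr)+2e^{-2\phi}|d\phi|^2\mathrm{dvol}_{g_N}$, which is exactly the identity the paper invokes after the corollary. Your explicit bookkeeping of the frame-index contraction and the use of pair symmetry simply makes precise what the paper leaves implicit.
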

Here we used that
\begin{equation*}
e^{-2\phi}\frac{14}{3}\Delta\phi=d\left(e^{-2\phi}\frac{14}{3}d*\phi\right)+\frac{28}{3}e^{-2\phi}|d\phi|^2.
\end{equation*}
\begin{rem}
Setting $g_s=e^\phi$ for the string coupling and
\begin{equation*}
2\pi R_{11}=\frac{\kappa_{11}^2}{\kappa_{10}^2g_s^2}=\frac{1}{g_s^2}\int_{S^1}\alpha^{10}
\end{equation*}
we see that the Einstein-Hilbert parts of the Lagrangians are related by
\begin{equation*}
\int_{S^1}\mathrm{dvol}_{g_M}R^M=(2\pi R_{11})\mathrm{dvol}_{g_N}R^N+\ldots
\end{equation*}
In this sense $R_{11}$ is the radius of the circle $S^1$ that forms the eleventh dimension. The gravitational coupling constants are related to the Planck length $l_p$ in eleven dimensions and the string length $l_s$ in ten dimensions by
\begin{align*}
\kappa_{11}^2&=\frac{1}{4\pi}(2\pi l_p)^9\\
\kappa_{10}^2&=\frac{1}{4\pi}(2\pi l_s)^8
\end{align*}
and the form of the metric $g_M$ indicates that
\begin{equation*}
l_p=g_s^{1/3}l_s.
\end{equation*}
This implies
\begin{equation*}
R_{11}=g_sl_s=g_s^{2/3}l_p,
\end{equation*}
so that the radius $R_{11}$ measured in string units is proportional to the string coupling $g_s$. This is the idea of an eleventh dimension opening up and becoming large as the type IIA string theory becomes strongly coupled \cite{W}.
\end{rem}

\subsection{The Maxwell term}
The following is a consequence of Proposition \ref{prop:horizontal projection}.

\begin{prop} With respect to the orthonormal basis from Lemma \ref{lem:ONB for M and N} we have:
\begin{align*}
G(e'_a,e'_b,e'_c,e'_d)&=e^{4\phi/3}\tG{4}(e_a,e_b,e_c,e_d)\\
G(e'_a,e'_b,e'_c,e'_{10})&=e^{\phi/3}H_3(e_a,e_b,e_c),
\end{align*}
where $0\leq a,b,c\leq 9$. This implies
\begin{equation*}
|G|^2_{g_M}\mathrm{dvol}_{g_M}=\left(|\tG{4}|^2_{g_N}+e^{-2\phi}|H_3|^2_{g_N}\right)\mathrm{dvol}_{g_N}\wedge\alpha^{10}
\end{equation*}
and for the Maxwell term
\begin{equation*}
-\frac{1}{4\kappa_{11}^2}\int_M\mathrm{dvol}_{g_M}|G|^2=-\frac{1}{4\kappa_{10}^2}\int_N\mathrm{dvol}_{g_N}\left(|\tG{4}|^2+e^{-2\phi}|H_3|^2\right).
\end{equation*}
\end{prop}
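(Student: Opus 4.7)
The plan is to evaluate the $4$-form $G=\pi_N^*G_4+\pi_N^*H_3\wedge\alpha^{10}$ on the $g_M$-orthonormal basis $\{e_0',\ldots,e_{10}'\}$ of Lemma \ref{lem:ONB for M and N}, split the squared norm into the ``purely horizontal'' contribution (no $e_{10}'$ slot) and the ``mixed'' contribution (exactly one $e_{10}'$ slot), and combine with the volume-form identity $\mathrm{dvol}_{g_M}=e^{-8\phi/3}\mathrm{dvol}_{g_N}\wedge\alpha^{10}$ from the same lemma. The integral identity then follows from the product structure $M=N\times S^1$ together with the definition $1/\kappa_{10}^2=(1/\kappa_{11}^2)\int_{S^1}\alpha^{10}$.

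For the horizontal components, I would substitute $e_a'=e^{\phi/3}(\bar e_a+C_{1,a}\bar e_{10})$ into $G(e_a',e_b',e_c',e_d')$. The prefactor $e^{4\phi/3}$ comes out. Inside, $\pi_N^*G_4$ contributes only through the $\bar e_a$ parts (since $\pi_{N*}\bar e_{10}=0$), producing $G_{4,abcd}$, while $\pi_N^*H_3\wedge\alpha^{10}$ is nonzero exactly when the $\alpha^{10}$-factor sees one of the $C_{1,\cdot}\bar e_{10}$ pieces; the four such terms assemble by antisymmetry into $-(C_1\wedge H_3)_{abcd}$. This gives $G(e_a',e_b',e_c',e_d')=e^{4\phi/3}\tG{4}(e_a,e_b,e_c,e_d)$ for $0\le a,b,c,d\le 9$. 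For the mixed components, the prefactor is $e^{\phi}\cdot e^{-2\phi/3}=e^{\phi/3}$; the $\pi_N^*G_4$ piece is killed because one slot is the vertical $\bar e_{10}$, and in $\pi_N^*H_3\wedge\alpha^{10}$ only the single term with $\alpha^{10}(\bar e_{10})=1$ survives (the others have $\pi_N^*H_3$ applied to a vertical vector), yielding $G(e_a',e_b',e_c',e_{10}')=e^{\phi/3}H_3(e_a,e_b,e_c)$.

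Since the basis is $g_M$-orthonormal with signature $(-,+,\ldots,+)$ and $\eta^{10,10}=+1$, the norm decomposes as
\begin{equation*}
|G|^2_{g_M}=\!\!\!\!\sum_{a_1<\ldots<a_4\le 9}\!\!\!\!\eta^{a_1a_1}\!\cdots\eta^{a_4a_4}\bigl(e^{4\phi/3}\tG{4}_{a_1a_2a_3a_4}\bigr)^2+\!\!\!\!\sum_{a_1<a_2<a_3\le 9}\!\!\!\!\eta^{a_1a_1}\eta^{a_2a_2}\eta^{a_3a_3}\bigl(e^{\phi/3}H_{3,a_1a_2a_3}\bigr)^2,
\end{equation*}
which collapses to $|G|^2_{g_M}=e^{8\phi/3}|\tG{4}|^2_{g_N}+e^{2\phi/3}|H_3|^2_{g_N}$. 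Multiplying by $\mathrm{dvol}_{g_M}=e^{-8\phi/3}\mathrm{dvol}_{g_N}\wedge\alpha^{10}$ the exponential factors cancel in the first summand and combine to $e^{-2\phi}$ in the second, proving the displayed pointwise identity.

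Finally, integrating over $M=N\times S^1$ (using Fubini and the fact that the integrand is the pull-back of a form on $N$ wedged with $\alpha^{10}$) separates $\int_{S^1}\alpha^{10}$ from the integral over $N$, and the definition of $\kappa_{10}$ converts the prefactor $\tfrac{1}{4\kappa_{11}^2}\int_{S^1}\alpha^{10}$ into $\tfrac{1}{4\kappa_{10}^2}$, giving the claimed formula for the Maxwell term. The main technical point is the horizontal-components bookkeeping that produces the $-C_1\wedge H_3$ correction responsible for the replacement $G_4\mapsto\tG{4}$; everything else is dimensional accounting of the exponential factors $e^{\phi/3}$ and $e^{-2\phi/3}$ attached to the orthonormal frame.
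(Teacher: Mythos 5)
Your proof is correct and follows exactly the route the paper intends (it states the proposition as a direct consequence of Proposition \ref{prop:horizontal projection} and Lemma \ref{lem:ONB for M and N} without writing out the computation): evaluating $G=\pi_N^*G_4+\pi_N^*H_3\wedge\alpha^{10}$ on the frame $e'_a=e^{\phi/3}(\bar e_a+C_{1,a}\bar e_{10})$, $e'_{10}=e^{-2\phi/3}\bar e_{10}$, with the $\alpha^{10}$-contractions assembling into $-C_1\wedge H_3$ and hence $\tG{4}$. The bookkeeping of the conformal factors, the orthonormal-frame norm decomposition, and the reduction of the integral via $\int_{S^1}\alpha^{10}$ and the definition of $\kappa_{10}$ are all as the paper requires, so the argument is complete.
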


\subsection{The complete action}
We collect our results.
\begin{thm}
With the fields on $N$ and $M=N\times S^1$ related as above, the action of eleven-dimensional supergravity on $M$
\begin{equation*}
S_{11}=\frac{1}{2\kappa_{11}^2}\int_M \left(R^M-\frac{1}{2}|G|^2\right)\mathrm{dvol}_{g_M}-\frac{1}{12\kappa_{11}^2}\int_MC\wedge G\wedge G
\end{equation*}
is equal to the action of type IIA supergravity on $N$
\begin{equation*}
S_{IIA}=S_{NS}+S_{RIIA}+S_{CSIIA}
\end{equation*}
with
\begin{align*}
S_{NS}&=\frac{1}{2\kappa_{10}^2}\int_N\mathrm{dvol}_{g_N}e^{-2\phi}\left(R^N+4|d\phi|^2-\frac{1}{2}|H_3|^2\right)\\
S_{RIIA}&=-\frac{1}{4\kappa_{10}^2}\int_N\mathrm{dvol}_{g_N}\left(|G_2|^2+|\tG{4}|^2\right)\\
S_{CSIIA}&=-\frac{1}{4\kappa_{10}^2}\int_N B_2\wedge G_4\wedge G_4.
\end{align*}
\end{thm}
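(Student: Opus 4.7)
The plan is to assemble the theorem directly from the three calculations already carried out in the preceding subsections. I would split the eleven-dimensional action into its Einstein, Maxwell and Chern--Simons pieces and identify each with a combination of summands of the ten-dimensional action. Since the heavy lifting---computing $R^M$ via the O'Neill formula, expanding $|G|^2$ in the adapted orthonormal frame, and simplifying $C\wedge G\wedge G$ modulo an exact form---has already been done, what remains is essentially bookkeeping: collecting the three displayed identities and matching coefficients.

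For the Chern--Simons piece, the corollary in the Chern--Simons subsection directly gives
$$-\frac{1}{12\kappa_{11}^2}\int_M C\wedge G\wedge G = -\frac{1}{4\kappa_{10}^2}\int_N B_2\wedge G_4\wedge G_4,$$
which is exactly $S_{CSIIA}$. For the Maxwell piece, the proposition on $|G|^2_{g_M}$ yields
$$-\frac{1}{4\kappa_{11}^2}\int_M\mathrm{dvol}_{g_M}|G|^2 = -\frac{1}{4\kappa_{10}^2}\int_N\mathrm{dvol}_{g_N}\left(|\tG{4}|^2+e^{-2\phi}|H_3|^2\right),$$
whose first summand is one of the two terms of $S_{RIIA}$, while the second matches the $-\tfrac{1}{2}e^{-2\phi}|H_3|^2$ term in $S_{NS}$ once the overall prefactor $\tfrac{1}{2\kappa_{10}^2}$ is brought in (since $\tfrac{1}{2\kappa_{10}^2}\cdot(-\tfrac{1}{2})=-\tfrac{1}{4\kappa_{10}^2}$).

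For the Einstein piece, the corollary following the O'Neill scalar-curvature computation states
$$\frac{1}{2\kappa_{11}^2}\int_M\mathrm{dvol}_{g_M}R^M = \frac{1}{2\kappa_{10}^2}\int_N\mathrm{dvol}_{g_N}e^{-2\phi}(R^N+4|d\phi|^2)-\frac{1}{4\kappa_{10}^2}\int_N\mathrm{dvol}_{g_N}|G_2|^2,$$
which contributes the $R^N$ and $4|d\phi|^2$ terms of $S_{NS}$ together with the remaining $|G_2|^2$ summand of $S_{RIIA}$. Adding the three contributions and regrouping terms according to NS/RR/CS origin then reproduces $S_{NS}+S_{RIIA}+S_{CSIIA}$ verbatim, which is the content of the theorem.

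The only subtlety, and what I regard as the main (already-absorbed) obstacle, is that the coefficient $4$ in front of $|d\phi|^2$ in $S_{NS}$ does \emph{not} appear directly in $R^M\,\mathrm{dvol}_{g_M}$: the O'Neill calculation produces instead the combination $\tfrac{14}{3}\Delta\phi-\tfrac{16}{3}|d\phi|^2$ weighted by $e^{-2\phi}$, and one must integrate $e^{-2\phi}\tfrac{14}{3}\Delta\phi$ by parts against the $e^{-2\phi}$ factor to trade it for $\tfrac{28}{3}e^{-2\phi}|d\phi|^2$ modulo an exact form, so that $-\tfrac{16}{3}+\tfrac{28}{3}=4$. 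Once this ``td'' drop is acknowledged---consistent with the convention used throughout the paper---the three displayed identities sum to the full IIA action and the theorem follows.
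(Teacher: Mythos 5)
Your proposal is correct and matches the paper's own treatment, which simply collects the Chern--Simons corollary, the Einstein-term corollary (where the $\tfrac{14}{3}e^{-2\phi}\Delta\phi$ term has already been traded for $\tfrac{28}{3}e^{-2\phi}|d\phi|^2$ up to an exact form), and the Maxwell-term proposition, then regroups the summands. Your bookkeeping of the coefficients, including the $-\tfrac{16}{3}+\tfrac{28}{3}=4$ step for the dilaton kinetic term, is exactly the argument intended by the paper.
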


\subsection{Spinors on the manifolds $N$ and $M$}

We assume from now on that $N$ is spin with associated complex spinor bundle $S^N$. Then $M=N\times S^1$ is also spin and $S^M=\pi_N^*S^N$ is a complex vector bundle on $M$. Let $\varep$ be a section of $S^N$ on $N$. Then this defines a section $\bar{\varep}$ of $S^M$ on $M$, which maps to $\varep$ under the fibrewise isomorphisms
\begin{equation*}
{\pi_N}_*\colon S^M_{(p,\theta)}\longrightarrow S^N_p,\quad p\in N,\theta\in S^1.
\end{equation*}
We define Clifford multiplication on $S^M$ as follows:
\begin{align*}
e_a'\cdot\bar{\varep}&=\overline{e_a\cdot\varep}\\
e_{10}'\cdot\bar{\varep}&=-\overline{\mathrm{dvol}_{g_N}\cdot\varep}.
\end{align*}
If we describe the spinor $\varep$ on $N$ locally by a map $\delta\colon U\rightarrow\mathbb{C}^{32}$ with respect to a trivialization of the $Spin(9,1)$-principal bundle, then we can describe it on $U\times S^1$ by the map 
\begin{equation*}
\bar{\delta}=\delta\circ\pi_N.
\end{equation*}
We have
\begin{align*}
\Gamma_a'\bar{\delta}&=\overline{\Gamma_a\delta}\\
\Gamma_{10}'\bar{\delta}&=\overline{\Gamma_{11}\delta},
\end{align*}
where $\Gamma_{11}$ is defined by
\begin{equation*}
\Gamma_{11}=\Gamma_0\Gamma_1\cdots\Gamma_9
\end{equation*}
as in Section \ref{sect:Weyl spinors}. Then $\Gamma_0'\Gamma_1'\cdots\Gamma_{10}'$ acts as $+1$ on $S^M$ (compare with Section \ref{sect:spinors in dim 10 and 11}).

\subsection{Killing spinor equations for type IIA supergravity}\label{chapt:dim reduct}
We consider the Killing spinor equation on $M$ in the form
\begin{equation*}
\nabla_X\varep+\frac{1}{24}\left(3G\cdot(X\cdot\varep)-X\cdot(G\cdot\varep)\right)=0,
\end{equation*}
as in Section \ref{sect:M-theory susy}.
\begin{prop}The terms in the Killing spinor equation for eleven-dimensional supergravity for $X=e_{10}'$ can be written as:
\begin{align*}
\nabla^M_{10'}\bar{\delta}&=\frac{1}{4}e^{4\phi/3}\overline{G_2\cdot\delta}-\frac{1}{3}e^{\phi/3}\overline{d\phi\cdot(\Gamma_{11}\delta)}\\
G\cdot\bar{\delta}&=e^{4\phi/3}\overline{\tG{4}\cdot\delta}+e^{\phi/3}\overline{H_3\cdot(\Gamma_{11}\delta)}\\
G\cdot(\Gamma_{10}'\cdot\bar{\delta})&=e^{4\phi/3}\overline{\tG{4}\cdot(\Gamma_{11}\delta)}+e^{\phi/3}\overline{H_3\cdot\delta}\\
\Gamma_{10}'\cdot(G\cdot{\bar{\delta}})&=e^{4\phi/3}\overline{\tG{4}\cdot(\Gamma_{11}\delta)}-e^{\phi/3}\overline{H_3\cdot\delta}.
\end{align*}
This implies
\begin{equation}\label{eqn:dilatino killing IIA}
\left(\frac{1}{4}e^{\phi}G_2-\frac{1}{3}d\phi\Gamma_{11}+\frac{1}{12}e^{\phi}\tG{4}\Gamma_{11}+\frac{1}{6}H_3\right)\cdot\delta=0.
\end{equation}
\end{prop}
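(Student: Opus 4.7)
The plan is to verify each of the four displayed identities by a direct component calculation in the $g_M$-orthonormal vielbein $\{e'_a\}$ from Lemma \ref{lem:ONB for M and N}, and then to substitute them into the eleven-dimensional gravitino equation from Section \ref{sect:M-theory susy}, written in the form $\nabla_X\varep + \frac{1}{24}\bigl(3G\cdot(X\cdot\varep) - X\cdot(G\cdot\varep)\bigr) = 0$, specialized to $X = e'_{10}$.

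For the first formula I would use the spin-connection expression $\nabla^M_X\psi = d\psi(X) - \frac{1}{4}\omega^M_{ab}(X){\Gamma'}^{ab}\psi$. The directional derivative $d\bar\delta(e'_{10})$ vanishes, because $\bar\delta = \delta\circ\pi_N$ is constant along the $S^1$-fibres while $e'_{10}$ is tangent to them. I would then split the antisymmetric sum into the ``mixed'' pieces $2\omega^M_{10,10,c}{\Gamma'}^{10c}$ (a factor of two arises from antisymmetry in the two $10$-slots) and the ``horizontal'' pieces $\omega^M_{10,bc}{\Gamma'}^{bc}$ with $b,c<10$. Substituting $\omega^M_{10,10,c} = -\tfrac{2}{3}e^{\phi/3}d\phi_c$ and $\omega^M_{10,bc} = -\tfrac{1}{2}e^{4\phi/3}{G_2}_{bc}$ from Corollary \ref{cor:conn 1-forms M}, the stated formula follows using the Clifford dictionary $\Gamma'_a\bar\delta = \overline{\Gamma_a\delta}$ for $a<10$ and $\Gamma'_{10}\bar\delta = \overline{\Gamma_{11}\delta}$ (which in turn unwinds from $\mathrm{dvol}_{g_N}\cdot = -\Gamma_{11}$), the anticommutation $\Gamma_{11}\Gamma^c = -\Gamma^c\Gamma_{11}$, and the identity $\sum_{b,c}{G_2}_{bc}\Gamma^{bc} = 2\,G_2\cdot(\,\cdot\,)$.

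The three Clifford identities for $G\cdot\bar\delta$, $G\cdot(\Gamma'_{10}\cdot\bar\delta)$ and $\Gamma'_{10}\cdot(G\cdot\bar\delta)$ all flow from a single starting calculation. I would insert the decomposition $G = \pi_N^*G_4 + \pi_N^*H_3\wedge\alpha^{10}$ into $G\cdot\bar\delta = \sum_{a_1<\cdots<a_4}G_{a_1\ldots a_4}{\Gamma'}^{a_1}\cdots{\Gamma'}^{a_4}\bar\delta$ and split the sum according to whether the largest index equals $10$. The component values $G(e'_{a_1},\ldots,e'_{a_4}) = e^{4\phi/3}\tG{4}(e_{a_1},\ldots,e_{a_4})$ and $G(e'_{a_1},e'_{a_2},e'_{a_3},e'_{10}) = e^{\phi/3}H_3(e_{a_1},e_{a_2},e_{a_3})$ from the preceding proposition, combined with $\Gamma'_{10}\bar\delta = \overline{\Gamma_{11}\delta}$, immediately deliver the second identity. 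The third identity is then the second one applied to the spinor $\overline{\Gamma_{11}\delta}$ (using $\Gamma_{11}^2 = \mathrm{Id}$). The fourth is obtained by left-multiplying the second identity by $\Gamma'_{10}$ and using that $\Gamma_{11}$ commutes through a $4$-form Clifford product (even parity) but anticommutes through a $3$-form Clifford product (odd parity), which produces the sign flip on $H_3\cdot\delta$ while preserving the sign on $\tG{4}\cdot(\Gamma_{11}\delta)$.

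With the four identities in hand, I would substitute them into the Killing spinor equation at $X = e'_{10}$. Because identities (3) and (4) differ only by the sign in front of $H_3\cdot\delta$, the $\tG{4}$-contributions in $3G\cdot(e'_{10}\cdot\bar\delta) - e'_{10}\cdot(G\cdot\bar\delta)$ combine with coefficient $3-1 = 2$ while the $H_3$-contributions combine with coefficient $3+1 = 4$, giving $\tfrac{1}{12}e^{4\phi/3}\overline{\tG{4}\cdot(\Gamma_{11}\delta)} + \tfrac{1}{6}e^{\phi/3}\overline{H_3\cdot\delta}$. Adding the expression for $\nabla^M_{10'}\bar\delta$ and dividing by the nonzero factor $e^{\phi/3}$ (whereby $e^{4\phi/3}$ becomes $e^\phi$ and $e^{\phi/3}$ becomes $1$) lands exactly on equation \eqref{eqn:dilatino killing IIA}, with the coefficients $\tfrac14$, $-\tfrac13$, $\tfrac1{12}$, $\tfrac16$ matching perfectly. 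The main obstacle is the sign- and scale-bookkeeping: tracking the signs produced by pushing $\Gamma_{11}$ through Clifford products of differing parity, the factors of $2$ arising from unwinding antisymmetric $\Gamma^{ab}$ sums, and the separate $e^{k\phi/3}$ prefactors inherited from $e'_a = e^{\phi/3}\bar{e}_a^\mH$ versus $e'_{10} = e^{-2\phi/3}\bar{e}_{10}$, which only collapse to a single overall $e^{\phi/3}$ after all contributions have been assembled.
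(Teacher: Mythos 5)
Your proposal is correct and follows exactly the route the paper intends (the paper leaves this as a direct computation): the spin connection with the coefficients of Corollary \ref{cor:conn 1-forms M}, the component formulas $G(e'_{a},e'_{b},e'_{c},e'_{d})=e^{4\phi/3}\tG{4}(e_a,e_b,e_c,e_d)$, $G(e'_{a},e'_{b},e'_{c},e'_{10})=e^{\phi/3}H_3(e_a,e_b,e_c)$, and the Clifford dictionary $\Gamma'_a\bar\delta=\overline{\Gamma_a\delta}$, $\Gamma'_{10}\bar\delta=\overline{\Gamma_{11}\delta}$, then substitution into the $\frac{1}{24}\bigl(3G\cdot(X\cdot\varep)-X\cdot(G\cdot\varep)\bigr)$ form of the gravitino equation at $X=e'_{10}$. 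All the sign and parity bookkeeping you describe (the factor $2$ from the antisymmetric $\Gamma^{ab}$ sum, $\Gamma_{11}$ commuting through the $4$-form and anticommuting through the $3$-form, and the overall division by $e^{\phi/3}$) is handled correctly and reproduces the stated coefficients.
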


\begin{prop}The terms in the Killing spinor equation for eleven-dimensional supergravity for $X=e_{a}'$ can be written as:
\begin{align*}
\nabla^M_{a'}\bar{\delta}&=e^{\phi/3}\overline{\nabla_a^N\delta}+\frac{1}{6}e^{\phi/3}\overline{d\phi_b\Gamma^{b}_{\,\,\,a}\delta}-\frac{1}{4}e^{4\phi/3}\overline{(i_aG_2)\cdot\Gamma_{11}\delta}\\
G\cdot(\Gamma_a'\cdot\bar{\delta})&=e^{4\phi/3}\overline{\tG{4}\cdot(\Gamma_a\cdot\delta)}+e^{\phi/3}\overline{H_3\cdot(\Gamma_{11}\Gamma_a\cdot\delta)}\\
\Gamma_{a}'\cdot(G\cdot{\bar{\delta}})&=e^{4\phi/3}\overline{\Gamma_a\cdot(\tG{4}\cdot\delta)}+e^{\phi/3}\overline{\Gamma_a\cdot(H_3\cdot\Gamma_{11}\delta)}\\
\end{align*}
This implies
\begin{align}\label{eqn first version killing IIA}
\nonumber 0&=\nabla_a^N\delta+\frac{1}{6}d\phi_b\Gamma^{b}_{\,\,\,a}\delta-\frac{1}{4}e^{\phi}(i_aG_2)\cdot\Gamma_{11}\delta\\
&\quad +\frac{1}{24}e^{\phi}\left(3\tG{4}\cdot(\Gamma_a\cdot\delta)-\Gamma_a\cdot(\tG{4}\cdot\delta)\right)\\
\nonumber&\quad-\frac{1}{24}\left(3H_3\cdot(\Gamma_a\cdot\Gamma_{11}\delta)+\Gamma_a\cdot(H_3\cdot\Gamma_{11}\delta)\right).
\end{align}
\end{prop}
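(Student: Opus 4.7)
My plan is to prove the three displayed formulas separately using the data assembled earlier in the section, and then substitute into the eleven-dimensional Killing spinor equation from Section \ref{sect:M-theory susy}.

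For the covariant derivative $\nabla^M_{a'}\bar\delta$, I would start from the local formula
\begin{equation*}
\nabla^M_X\psi = L_X\psi - \frac{1}{4}\omega^M_{\mu\nu}(X)\Gamma^{'\mu\nu}\psi,
\end{equation*}
take $X = e_a'$, and split the sum over $\mu,\nu \in \{0,\ldots,10\}$ into the purely $N$-indexed block and the mixed block (with one index equal to $10$). The $e_a'$-Lie derivative satisfies $L_{e_a'}\bar\delta = e^{\phi/3}\overline{L_{e_a}\delta}$ because $\bar\delta = \delta\circ\pi_N$ is constant along $S^1$ and $\bar{e}_a^{\mathcal H} = \bar{e}_a + C_1(e_a)\bar{e}_{10}$. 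For the $N$-indexed block, substituting $\omega^M_{abc} = e^{\phi/3}\omega^N_{abc} - \tfrac{1}{3}e^{\phi/3}(d\phi_b\eta_{ac}-d\phi_c\eta_{ab})$ from Corollary \ref{cor:conn 1-forms M}, the first piece combines with $L_{e_a'}\bar\delta$ to give $e^{\phi/3}\overline{\nabla^N_a\delta}$. The dilaton correction, using antisymmetry of $\Gamma^{bc}$ and the identity $\eta_{ac}\Gamma^{bc} = \Gamma^b{}_a$, collapses to $\tfrac{1}{6}e^{\phi/3}\overline{d\phi_b\Gamma^b{}_a\delta}$. For the mixed block, antisymmetry of both $\omega^M$ and $\Gamma^{'\mu\nu}$ in the last two slots yields a doubled contribution $-\tfrac{1}{2}\omega^M_{a,10,b}\Gamma^{'10,b}\bar\delta$; substituting $\omega^M_{a,10,b} = -\tfrac{1}{2}e^{4\phi/3}{G_2}_{ab}$, using $\Gamma^{'10,b}\bar\delta = \Gamma^{'10}\Gamma^{'b}\bar\delta = \overline{\Gamma_{11}\Gamma^b\delta}$, and commuting $\Gamma_{11}\Gamma^b = -\Gamma^b\Gamma_{11}$ produces exactly $-\tfrac{1}{4}e^{4\phi/3}\overline{(i_aG_2)\cdot\Gamma_{11}\delta}$.

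For the two Clifford-multiplication formulas, I would apply the identity established in the preceding proposition,
\begin{equation*}
G\cdot\bar\psi = e^{4\phi/3}\overline{\tG{4}\cdot\psi} + e^{\phi/3}\overline{H_3\cdot(\Gamma_{11}\psi)},
\end{equation*}
which is valid for any pullback spinor $\bar\psi$. Taking $\psi = \Gamma_a\delta$ and using $\Gamma_a'\bar\delta = \overline{\Gamma_a\delta}$ yields the second displayed equation immediately. Applying $\Gamma_a'$ from the left to the analogous decomposition of $G\cdot\bar\delta$ and using $\Gamma_a'\overline{\chi} = \overline{\Gamma_a\chi}$ yields the third.

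Finally I would substitute all three ingredients into the equation $0 = \nabla^M_{a'}\bar\varep + \tfrac{1}{24}\bigl(3G\cdot(e_a'\cdot\bar\varep) - e_a'\cdot(G\cdot\bar\varep)\bigr)$, factor out $e^{\phi/3}$ and drop the bars. The $\tG{4}$-terms match the target directly; the $H_3$-terms match after rewriting $\Gamma_{11}\Gamma_a = -\Gamma_a\Gamma_{11}$, which converts $3H_3\cdot(\Gamma_{11}\Gamma_a\delta) - \Gamma_a\cdot(H_3\cdot\Gamma_{11}\delta)$ into $-\bigl(3H_3\cdot(\Gamma_a\Gamma_{11}\delta) + \Gamma_a\cdot(H_3\cdot\Gamma_{11}\delta)\bigr)$, exactly as in \eqref{eqn first version killing IIA}. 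The main obstacle is purely bookkeeping: keeping track of the sign conventions in the identification $\Gamma_{10}' \leftrightarrow \Gamma_{11}$ (in particular the minus sign from $\mathrm{dvol}_{g_N}\cdot\delta = -\Gamma_{11}\delta$ built into the spinor pullback) and correctly placing indices when moving between $\omega^M$-contractions and the physical $\Gamma^{bc}$.
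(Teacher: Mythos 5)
Your proposal is correct and follows exactly the route the paper intends: the proposition is stated without a written proof, and its verification is meant to follow from Corollary \ref{cor:conn 1-forms M} (connection coefficients, with the doubled mixed $\omega^M_{a\,10\,c}$ block giving the $G_2$ term and the dilaton piece collapsing via $\eta_{ac}\Gamma^{bc}=\Gamma^{b}_{\,\,\,a}$), from the orthonormal-frame components of $G$ established in the Maxwell-term proposition, and from the identifications $\Gamma_a'\bar{\delta}=\overline{\Gamma_a\delta}$, $\Gamma_{10}'\bar{\delta}=\overline{\Gamma_{11}\delta}$ --- all of which you use. Your final substitution into $0=\nabla^M_{a'}\bar{\varep}+\tfrac{1}{24}\bigl(3G\cdot(e_a'\cdot\bar{\varep})-e_a'\cdot(G\cdot\bar{\varep})\bigr)$ and the sign flip from $\Gamma_{11}\Gamma_a=-\Gamma_a\Gamma_{11}$ reproduce \eqref{eqn first version killing IIA} exactly, so there is nothing to correct.
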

Since
\begin{equation*}
\Gamma^b_{\,\,\,a}=-\Gamma_a\Gamma^b-\delta^b_a
\end{equation*}
we can write
\begin{equation*}
\frac{1}{6}d\phi_b\Gamma^{b}_{\,\,\,a}\delta=-\frac{1}{6}\Gamma_a\cdot(d\phi\cdot\delta)-\frac{1}{6}d\phi_a\delta.
\end{equation*}
With
\begin{equation*}
\tilde{\delta}=e^{-\phi/6}\delta
\end{equation*}
equation \eqref{eqn first version killing IIA} becomes
\begin{align}\label{eqn second version killing IIA}
\nonumber 0&=\nabla_a^N\tilde{\delta}-\frac{1}{6}\Gamma_a\cdot(d\phi\cdot\tilde{\delta})-\frac{1}{4}e^{\phi}(i_aG_2)\cdot\Gamma_{11}\tilde{\delta}\\
&\quad +\frac{1}{24}e^{\phi}\left(3\tG{4}\cdot(\Gamma_a\cdot\tilde{\delta})-\Gamma_a\cdot(\tG{4}\cdot\tilde{\delta})\right)\\
\nonumber&\quad-\frac{1}{24}\left(3H_3\cdot(\Gamma_a\cdot\Gamma_{11}\tilde{\delta})+\Gamma_a\cdot(H_3\cdot\Gamma_{11}\tilde{\delta})\right).
\end{align}
Adding to this equation 
\begin{equation*}
\frac{1}{2}e^{-\phi/6}\Gamma_a\cdot(\Gamma_{11}\cdot \text{equation \eqref{eqn:dilatino killing IIA}})
\end{equation*}
we get
\begin{equation*}
\nabla_a^N\tilde{\delta}-\frac{1}{4}(i_aH_3)\cdot\Gamma_{11}\tilde{\delta}+\frac{1}{8}e^\phi\tG{4}\cdot(\Gamma_a\cdot\tilde{\delta})+\frac{1}{8}e^{\phi}G_2\cdot(\Gamma_a\cdot\Gamma_{11}\tilde{\delta})=0.
\end{equation*}

\begin{cor}The dilatino and gravitino Killing spinor equation for type IIA supergravity are
\begin{equation*}
0=\left(\frac{1}{4}e^{\phi}G_2-\frac{1}{3}d\phi\Gamma_{11}+\frac{1}{12}e^{\phi}\tG{4}\Gamma_{11}+\frac{1}{6}H_3\right)\cdot\varep
\end{equation*}
and
\begin{equation*}
0=\nabla_X\varep-\frac{1}{4}(i_XH_3)\cdot \Gamma_{11}\varep+\frac{1}{8}e^\phi\left(\tG{4}\cdot(X\cdot\varep)+G_2\cdot(X\cdot\Gamma_{11}\varep)\right).
\end{equation*}
\end{cor}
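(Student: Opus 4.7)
The plan is to read off both Killing spinor equations directly from the dimensional reduction of the eleven-dimensional equation
\begin{equation*}
\nabla_X\varep+\frac{1}{24}\bigl(3G\cdot(X\cdot\varep)-X\cdot(G\cdot\varep)\bigr)=0
\end{equation*}
by splitting into the cases $X=e_{10}'$ and $X=e_a'$ in the $g_M$-orthonormal frame of Lemma \ref{lem:ONB for M and N}. The two propositions immediately preceding the corollary have already performed this split: evaluating each term using the connection formulas from Corollary \ref{cor:LC connection M onb} and the Clifford multiplication rules $e_a'\cdot\bar\delta=\overline{e_a\cdot\delta}$, $e_{10}'\cdot\bar\delta=\overline{\Gamma_{11}\delta}$ yields equations \eqref{eqn:dilatino killing IIA} and \eqref{eqn first version killing IIA} respectively. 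So most of the work is already done; the corollary only needs to assemble these into the advertised form.

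First I would observe that equation \eqref{eqn:dilatino killing IIA} is already exactly the dilatino Killing spinor equation (after identifying $\delta$ with $\varep$), so no further manipulation is needed on that side. Second, for the gravitino equation, I would rescale the spinor by $\tilde\delta=e^{-\phi/6}\delta$. Using $\nabla^N_a(e^{-\phi/6}\delta)=e^{-\phi/6}(\nabla^N_a\delta-\tfrac{1}{6}d\phi_a\delta)$ together with the algebraic identity
\begin{equation*}
\Gamma^{b}{}_{a}=-\Gamma_a\Gamma^b-\delta^b_a,
\end{equation*}
which converts $\tfrac{1}{6}d\phi_b\Gamma^{b}{}_{a}\delta$ into $-\tfrac{1}{6}\Gamma_a\cdot(d\phi\cdot\delta)-\tfrac{1}{6}d\phi_a\delta$, the $d\phi_a\tilde\delta$ term cancels against the contribution from the rescaling, producing equation \eqref{eqn second version killing IIA}.

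Third, I would eliminate the remaining $d\phi$-term together with the unwanted $G_2$, $\tG{4}$ and $H_3$ contributions in \eqref{eqn second version killing IIA} by adding to it $\tfrac{1}{2}e^{-\phi/6}\Gamma_a\cdot\Gamma_{11}$ applied to the dilatino equation \eqref{eqn:dilatino killing IIA}. This is the step where the book-keeping is most delicate: one has to use $\{\Gamma_{11},\Gamma_a\}=0$ and the Clifford identity
\begin{equation*}
X\cdot(F\cdot\psi)+(-1)^{k+1}F\cdot(X\cdot\psi)=(2i_XF)\cdot\psi
\end{equation*}
from Section \ref{sect:lin alg spinors} for $k=2,3,4$ to collect the various $3G\cdot(\Gamma_a\cdot{-})\pm\Gamma_a\cdot(G\cdot{-})$ combinations into clean Clifford products. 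The $d\phi$-term drops out, the coefficients of $G_2$ and $H_3$ recombine into $\tfrac{1}{8}e^\phi G_2\cdot(\Gamma_a\cdot\Gamma_{11}\tilde\delta)$ and $-\tfrac{1}{4}(i_aH_3)\cdot\Gamma_{11}\tilde\delta$, and the $\tG{4}$ pieces collapse to $\tfrac{1}{8}e^\phi\tG{4}\cdot(\Gamma_a\cdot\tilde\delta)$, giving exactly the stated gravitino equation after replacing $\tilde\delta$ by $\varep$ and $e_a'$ by a general vector field $X$.

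The main obstacle is precisely this last recombination: one must keep careful track of the signs produced by $\Gamma_{11}$ anticommuting with $\Gamma_a$ and with the odd-degree forms $H_3$ and $\tG{4}$, and verify that the coefficients chosen in the dilatino multiplier $\tfrac{1}{2}e^{-\phi/6}\Gamma_a\Gamma_{11}$ simultaneously annihilate the $d\phi_a$-term and convert each $3F\cdot(\Gamma_a\cdot{-})\pm\Gamma_a\cdot(F\cdot{-})$ expression into the symmetric product $F\cdot(\Gamma_a\cdot{-})$. Everything else is a straightforward reading of the two preceding propositions.
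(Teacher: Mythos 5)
Your proposal is correct and follows the paper's own route exactly: it reads the dilatino equation off the $X=e_{10}'$ reduction \eqref{eqn:dilatino killing IIA}, passes from \eqref{eqn first version killing IIA} to \eqref{eqn second version killing IIA} via the rescaling $\tilde\delta=e^{-\phi/6}\delta$ and the identity $\Gamma^{b}{}_{a}=-\Gamma_a\Gamma^b-\delta^b_a$, and then adds $\tfrac{1}{2}e^{-\phi/6}\Gamma_a\cdot\Gamma_{11}$ times the dilatino equation, with the resulting coefficients $-\tfrac14(i_XH_3)\cdot\Gamma_{11}$, $\tfrac18 e^\phi\tG{4}\cdot(X\cdot{-})$ and $\tfrac18 e^\phi G_2\cdot(X\cdot\Gamma_{11}{-})$ matching the paper's. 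The sign bookkeeping you flag (parity of $d\phi$, $H_3$, $\tG{4}$ under $\Gamma_{11}$ and the contraction identity for $k=2,3$) indeed works out as you describe, so nothing is missing.
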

\begin{rem}
Using the sign in \cite{BBS} mentioned in Remark \ref{rem: sign BBS} above has the effect of changing the sign of $G_2$ in both Killing spinor equations. Then our dilatino Killing spinor equation becomes the same as equations (8.46) and (8.50) in \cite{BBS}. Furthermore, our equation \eqref{eqn first version killing IIA} becomes their (8.47). However, there still remains a difference to the gravitino Killing spinor equation in their (8.51): With the sign from Remark \ref{rem: sign BBS} we get for the last term in our equation
\begin{equation*}
-\frac{1}{8}e^{\phi}G_2\cdot(X\cdot\Gamma_{11}\varep)=\frac{1}{4}e^{\phi}\left((i_XG_2)\cdot\Gamma_{11}\varep\right)-\frac{1}{8}e^{\phi}X\cdot(G_2\cdot\Gamma_{11}\varep).
\end{equation*}
However, they have
\begin{equation*}
-\frac{1}{4}e^{\phi}(X^\flat\wedge G_2)\cdot\Gamma_{11}\varep=\frac{1}{4}e^{\phi}\left((i_XG_2)\cdot\Gamma_{11}\varep\right)-\frac{1}{4}e^{\phi}X\cdot (G_2\cdot\Gamma_{11}\varep).
\end{equation*}
\end{rem}

\subsubsection*{Address}

\noindent Mathematical Institute, Ludwig Maximilians University Munich\\
         Theresienstr.~39, 80333 Munich, Germany
\vspace{0.2cm}

\noindent Institute for Geometry and Topology, University of Stuttgart\\
          Pfaffenwaldring 57, 70569 Stuttgart, Germany

\end{document}